\theoremstyle{plain}
\newtheorem{theorem}{Theorem}
\newtheorem{lemma}[theorem]{Lemma}
\newtheorem{corollary}[theorem]{Corollary}
\newtheorem{conjecture}[theorem]{Conjecture}
\newtheorem{proposition}[theorem]{Proposition}
\newtheorem{definition}[theorem]{Definition}
\theoremstyle{definition}
\newcommand{\setbuilder}[2]{\{#1\,\colon\,#2\}}
\newcommand{\set}[1]{\left\{#1\right\}}
\newcommand{\Bigset}[1]{\Bigl\{#1\Bigr\}}
\newcommand{\BMdist}[2]{d_{\mathrm{BM}}(#1,#2)}
\newcommand{\epsi}{\varepsilon}
\newcommand{\fhi}{\varphi}
\newcommand{\Tr}{\mathsf{T}}
\newcommand{\norm}[1]{\lVert#1\rVert}
\newcommand{\abs}[1]{\lvert#1\rvert}
\newcommand{\Bigabs}[1]{\Bigl\lvert#1\Bigr\rvert}
\newcommand{\card}[1]{\lvert#1\rvert}
\newcommand{\numbersystem}[1]{\mathbb{#1}}
\newcommand{\N}{\numbersystem{N}}
\newcommand{\R}{\numbersystem{R}}
\newcommand{\dimensional}{\nobreakdash-\hspace{0pt}dimensional\xspace}
\newcommand{\uproduct}[2]{#1\vee#2}
\newcommand{\vect}[1]{\bm{#1}}
\newcommand{\va}{\vect{a}}
\newcommand{\vb}{\vect{b}}
\newcommand{\vc}{\vect{c}}
\newcommand{\ve}{\vect{e}}
\newcommand{\vg}{\vect{g}}
\newcommand{\vh}{\vect{h}}
\newcommand{\vj}{\vect{j}}
\newcommand{\vo}{\vect{o}}
\newcommand{\vp}{\vect{p}}
\newcommand{\vq}{\vect{q}}
\newcommand{\vu}{\vect{u}}
\newcommand{\vv}{\vect{v}}
\newcommand{\vx}{\vect{x}}
\newcommand{\vy}{\vect{y}}
\newcommand{\vz}{\vect{z}}
\begin{document}

\bibliographystyle{amsplain}

\title{Maximal equilateral sets}
%\date{25\textsuperscript{th} August 2011}
\author{Konrad J.\ Swanepoel}
\thanks{Parts of this paper were written while the first author was at the Chemnitz University of Technology, and also during a visit to the Discrete Analysis Programme at the Newton Institute in Cambridge in May 2011.}
\address{Department of Mathematics,
London School of Economics and Political Science}
\email{k.swanepoel@lse.ac.uk}
\author{Rafael Villa}
\address{Departamento An\'alisis Matem\'atico, Facultad de Matem\'aticas,
Universidad de Se\-villa, c/Tarfia, S/N, 41012 Sevilla, Spain}
\email{villa@us.es}
\subjclass[2010]{Primary 46B04; Secondary 46B20, 52A21, 52C17}
\keywords{equilateral set, equilateral simplex, equidistant points, Brouwer's fixed point theorem}

\begin{abstract}
A subset of a normed space $X$ is called equilateral if the distance between any two points is the same.
Let $m(X)$ be the smallest possible size of an equilateral subset of $X$ maximal with respect to inclusion.
We first observe that Petty's construction of a $d$\dimensional $X$ of any finite dimension $d\geq 4$ with $m(X)=4$ can be generalised to give $m(X\oplus_1\R)=4$ for any $X$ of dimension at least $2$ which has a smooth point on its unit sphere.
By a construction involving Hadamard matrices we then show that for any set $\Gamma$, $m(\ell_p(\Gamma))$ is finite and bounded above by a function of $p$, for all $1\leq p<2$.
Also, for all $p\in[1,\infty)$ and $d\in\N$ there exists $c=c(p,d)>1$ such that $m(X)\leq d+1$ for all $d$\dimensional $X$ with Banach-Mazur distance less than $c$ from~$\ell_p^d$.
Using Brouwer's fixed-point theorem we show that $m(X)\leq d+1$ for all $d$\dimensional $X$ with Banach-Mazur distance less than $3/2$ from~$\ell_\infty^d$.
A graph-theoretical argument furthermore shows that $m(\ell_\infty^d)=d+1$.

The above results lead us to conjecture that $m(X)\leq 1+\dim X$ for all finite\dimensional normed spaces $X$.
\end{abstract}

\maketitle

\section{Introduction}\label{section:introduction}
Vector spaces in this paper are over the field $\R$ of real numbers.
Write $[d]:=\set{1,2,\dots,d}$ for any $d\in\N$ and $\binom{V}{k}:=\setbuilder{A\subseteq V}{\card{A}=k}$ for any set $V$ and $k\in\N$. 
Consider $d$\dimensional vectors to be functions $\vx:[d]\to\R$ denoted using the superscript notation $\vx=(\vx^{(1)},\dots,\vx^{(d)})$.
Similarly, write $\vx=(\vx^{(n)})_{n\in\Gamma}$ for any function $\vx\colon\Gamma\to\R$.
Write $\vo$ for zero vectors and zero functions.
For any $\gamma\in\Gamma$, let $\ve_\gamma$ denote the indicator function of $\set{\gamma}$, i.e., $\ve_\gamma(\gamma)=1$ and $\ve_\gamma(\delta)=0$ for all $\delta\in\Gamma\setminus\set{\gamma}$.
Given $\va=(\va^{(1)},\dots,\va^{(d)})\in\R^d$ and $\vb\in X$ with $X$ any vector space, define the \emph{Kronecker product} or \emph{tensor product} $\va\otimes \vb$ by $(\va^{(1)}\vb,\dots,\va^{(d)}\vb)\in X^d$.

Let $X$ denote a real normed vector space with norm $\norm{\cdot}=\norm{\cdot}_X$.
We also use \emph{space} or \emph{normed space} to refer to such spaces.
We will use the multiplicative Banach-Mazur distance between two normed spaces $X$ and $Y$ of the same finite dimension, denoted by $\BMdist{X}{Y}$ and defined to be the infimum of all $c\geq 1$ such that \[\norm{\vx}_X\leq\norm{T\vx}_Y\leq c\norm{\vx}_X\quad\text{for all $\vx\in X$}\]
for some invertible linear transformation $T\colon X\to Y$.

Let $\Gamma$ be any set.
For $p\in[1,\infty)$ let $\ell_p(\Gamma)$ denote the Banach space of all functions $\vx\colon\Gamma\to\R$ such that $\sum_{n\in\Gamma}\abs{\vx^{(n)}}^p < \infty$ with norm $\norm{\vx}_p=\left(\sum_{n\in\Gamma}\abs{\vx^{(n)}}^p\right)^{1/p}$.
Let $\ell_\infty(\Gamma)$ denote the Banach space of all bounded scalar-valued functions on $\Gamma$ with norm $\norm{\vx}_\infty:=\sup_{n\in\Gamma}\abs{\vx^{(n)}}$.
As usual, for any $p\in[1,\infty]$ we write $\ell_p$ for the sequence spaces $\ell_p(\N)$ and $\ell_p^d$ for $\ell_p([d])$.
If $X$ and $Y$ are two normed spaces, their $\ell_p$-sum $X\oplus_p Y$ is defined to be the direct sum $X\oplus Y$ with norm $\norm{(\vx,\vy)}_p:=\norm{(\norm{\vx}_X,\norm{\vy}_Y)}_p$.
%Also, write $c$ for the subspace of $\ell_\infty$ of convergent sequences, and $c_0$ for the subspace of null sequences.
Denote the \emph{sphere} and \emph{ball} in $X$ with center $\vc\in X$ and radius~$r>0$ by
\[S(\vc,r)=S_X(\vc,r):=\setbuilder{\vx\in X}{\norm{\vx-\vc}=r}\]
and
\[B(\vc,r)=B_X(\vc,r):=\setbuilder{\vx\in X}{\norm{\vx-\vc}\leq r}\text{,}\]
respectively.
See \cite{JL} for further background on the geometry of Banach spaces.
\begin{definition}
A subset $A\subseteq X$ is \emph{$\lambda$-equilateral} if $\norm{\vx-\vy}=\lambda$ for all $\set{\vx,\vy}\in \binom{A}{2}$.
A set $A\subseteq X$ is \emph{equilateral} if $A$ is $\lambda$-equilateral for some $\lambda>0$.
An equilateral set $A\subseteq X$ is \emph{maximal} if there does not exist an equilateral set $A'\subseteq X$ with $A\subsetneqq A'$.
\end{definition}
It is clear that a $\lambda$-equilateral set is a maximal equilateral set if and only if it does not lie on a sphere of radius $\lambda$.
Also, $A$ is $\lambda$-equilateral if and only if the balls $B(\vc,\lambda/2)$, $\vc\in A$, are pairwise touching.
It follows (as observed by Petty \cite{Petty} and P.~S.~Soltan \cite{Soltan}) by a result of Danzer and Gr\"unbaum \cite{DG} that an equilateral set in a $d$\dimensional normed space has cardinality at most $2^d$ with equality only if the unit ball is an affine cube, that is, only if the space is isometric to $\ell_\infty^d$, and in that case only if the set consists of all the vertices of some $\ell_\infty^d$-ball.
For a survey on equilateral sets, see \cite{Swanepoel2004}.
See also \cite{Swanepoel2008} for recent results on the existence of large equilateral sets in finite\dimensional spaces.
This paper will be exclusively concerned with maximal equilateral sets.
\begin{definition}
Let $m(X)$ denote the minimum cardinality of a maximal equilateral set in the normed space $X$.
\end{definition}
It follows from the above-mentioned result of Danzer and Gr\"unbaum that $m(X)\leq 2^d$ if $\dim X=d$.

We first dispose of the $2$\dimensional case.
By a simple continuity argument, any set of two points in a normed space of dimension at least $2$ can be extended to an equilateral set of size~$3$.
It is also possible to find a maximal equilateral set of size $3$ in any $2$\dimensional $X$.
In fact, if $X$ is not isometric to $\ell_\infty^2$ then by \cite{DG}, any equilateral set has cardinality $<4$, which implies that any equilateral set of size $3$ is already maximal.
Furthermore, by \cite{DG} the only equilateral sets of size $4$ consist of the vertices of an $\ell_2^\infty$-ball, so any equilateral set of size $3$ not consisting of three vertices of an $\ell_2^\infty$-ball (such sets exist by extending an appropriate equilateral set of $2$ points), is maximal.
%the result of Danzer and Gr\"unbaum gives that %Also, in $\ell_\infty^2$ it is easy to find a maximal equilateral set of size $3$.
It follows that $m(X)=3$ for all $2$\dimensional~$X$.

Now suppose that the dimension of $X$ is at least $3$.
Using a topological result, Petty \cite{Petty} showed that any equilateral set of size $3$ in $X$ can be extended to one of size $4$.
He also constructed, for each dimension $d\geq 3$, a $d$\dimensional normed space with a maximal equilateral set of size $4$.
Below (Proposition~\ref{pettygen}) we modify his example to show for instance that $\ell_1^d$ and $\ell_1$ also have this property.

A simple linear algebra argument shows that $m(\ell_2^d)=d+1$.
Brass \cite{Brass1999} and Dekster \cite{Dekster2000} independently showed that if $\BMdist{X}{\ell_2^d}\leq1+1/(d+1)$, then $m(X)\geq d+1$ (see also \cite[Theorem~8]{Swanepoel2004}).
By a theorem of Sch\"utte \cite{Schutte} (as pointed out by Smyth \cite{Smyth}) if $\BMdist{X}{\ell_2^d}\leq 1+1/(d+1)$ then $m(X)\leq d+1$.
In particular, using $\BMdist{\ell_p^d}{\ell_2^d}=d^{\abs{1/p-1/2}}$ (see for instance \cite{JL}) it follows that
\begin{equation}\label{+}
m(\ell_p^d)=d+1\quad\text{if}\quad \Bigabs{\frac{1}{p}-\frac{1}{2}}\leq\frac{1+o(1)}{d\ln d}\text{.}
\end{equation}

Even though $\ell_\infty^d$ has an equilateral set of size $2^d$, it turns out to have a maximal equilateral set of size $d+1$.
More generally, we show the following:
\begin{theorem}\label{thm:linfty}
If $\BMdist{X}{\ell_\infty^d}<3/2$, then $m(X)\leq d+1$.
In addition, $m(\ell_\infty^d)=d+1$.
\end{theorem}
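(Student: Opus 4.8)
My plan is to organise the proof around one explicit configuration in $\ell_\infty^d$. It suffices to establish (a) $m(X)\le d+1$ whenever $\BMdist{X}{\ell_\infty^d}<3/2$ — which contains $m(\ell_\infty^d)\le d+1$ as the case $X=\ell_\infty^d$, since $\BMdist{\ell_\infty^d}{\ell_\infty^d}=1$ — and (b) $m(\ell_\infty^d)\ge d+1$. The configuration I would use consists of the $d+1$ points $\vp_0=\vo$ and $\vp_i=\ve_i+\frac12\sum_{j<i}\ve_j$ for $1\le i\le d$. These form a $1$-equilateral set in $\ell_\infty^d$ (every nonzero coordinate of a difference $\vp_i-\vp_j$ equals $\pm1$ or $\pm\frac12$), and the set is \emph{maximal}: if $\norm{\vx-\vp_i}_\infty=1$ for all $i$, the constraint from $\vp_0$ gives $\abs{\vx^{(j)}}\le1$ and the constraint from $\vp_j$ (whose $j$-th coordinate is $1$) gives $\vx^{(j)}\ge0$, so $\vx^{(j)}\in[0,1]$ for every $j$; reading the equations $\norm{\vx-\vp_i}_\infty=1$ for $i=d,d-1,\dots,1$ in turn, the coordinates $j<i$ contribute at most $\abs{\vx^{(j)}-\frac12}\le\frac12$ and the coordinates $j>i$ contribute $\abs{\vx^{(j)}}$, already forced to $0$ at earlier steps, so the value $1$ must be attained in coordinate $i$, forcing $\vx^{(i)}=0$; hence $\vx=\vo=\vp_0$, which is impossible. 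This already gives $m(\ell_\infty^d)\le d+1$.

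For (a), after a linear change of coordinates assume $X=(\R^d,\norm{\cdot})$ with $\norm{\cdot}_\infty\le\norm{\cdot}\le c\,\norm{\cdot}_\infty$ for some $c\in[1,3/2)$. I would look for a maximal equilateral set that is a small perturbation of the configuration above: let the $d+1$ points range over small axis-parallel boxes around $\vp_0,\dots,\vp_d$ and apply Brouwer's fixed-point theorem to a suitable continuous self-map of the product of boxes, built so that its fixed points correspond to $X$-equilateral configurations, obtaining positions $\vq_0,\dots,\vq_d$ at which the configuration is $X$-equilateral. It then remains to check that such a configuration is still maximal in $X$, and this is the technical core of the theorem. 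Repeating the cascading coordinatewise elimination above, but with each $\ell_\infty$-inequality replaced by the corresponding $X$-inequality and hence loosened by the factor $c$, a hypothetical common point $\vx$ at the common distance $\lambda$ has each of its coordinates confined to an interval of half-width roughly $\lambda-\frac12$ about $\frac12$ (with $\lambda\in[1,c]$ up to the perturbation error), and for the chain of implications still to end in the contradiction $\vx=\vq_0$ one needs this half-width, together with the distortion slack, to remain below the value $\lambda/c$ that a genuine witnessing coordinate must attain; the arithmetic is consistent exactly when $c<3/2$. Verifying this maximality statement, and choosing the perturbation boxes small enough that the Brouwer step applies while the elimination still goes through, is where I expect the main difficulty to lie.

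For (b) it suffices to show that every equilateral $A\subseteq\ell_\infty^d$ with $\card A\le d$ can be extended. Normalising, assume the common distance is $1$ and $\vo\in A$, say $A=\set{\vo=\vp_0,\vp_1,\dots,\vp_{k-1}}$ with $k\le d$, and seek $\vx$ with $\norm{\vx}_\infty=1$ and $\norm{\vx-\vp_i}_\infty=1$ for all $i$. Writing $M_c=\max_i\vp_i^{(c)}$ and $m_c=\min_i\vp_i^{(c)}$, the hypothesis forces $M_c-m_c\le1$ in every coordinate $c$; hence any admissible $\vx$ satisfies $\vx^{(c)}\in[M_c-1,m_c+1]$, and coordinate $c$ can \emph{witness} the required distance $1$ from $\vp_i$ precisely when $\vp_i^{(c)}\in\set{m_c,M_c}$. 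I would form the bipartite graph on parts $\set{0,1,\dots,k-1}$ and $[d]$ with these witnessing edges and verify Hall's condition on the first part. For $S\subseteq\set{0,\dots,k-1}$, in any coordinate $c$ outside the neighbourhood $N(S)$ all points $\vp_i$ with $i\in S$ lie strictly between $m_c$ and $M_c$, so by $M_c-m_c\le1$ their pairwise differences there have absolute value strictly below $1$ and their pairwise distances are all attained inside $N(S)$; thus $\set{\vp_i:i\in S}$, restricted to the coordinates in $N(S)$, is a $1$-equilateral set of $\card S$ distinct points each of which is extremal in some coordinate of $N(S)$, and a counting/case argument shows that $\card{N(S)}<\card S$ would be incompatible with the remaining points of $A$ also lying in $[-1,1]^d$ at pairwise distance $1$; so $\card{N(S)}\ge\card S$. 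A matching saturating $\set{0,\dots,k-1}$ then assigns distinct witnessing coordinates, and setting $\vx^{(c)}$ equal to the endpoint $M_c-1$ or $m_c+1$ that witnesses the matched point (choosing, at the coordinate matched to $\vp_0$, the endpoint for which $\abs{\vx^{(c)}}=1$) and $\vx^{(c)}=0$ at all unmatched coordinates yields the desired extension. Verifying Hall's condition is the delicate point here; together with the upper bound it gives $m(\ell_\infty^d)=d+1$.
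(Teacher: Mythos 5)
Your upper--bound configuration and its maximality check are fine (your staircase set is an affine image of the set the paper uses), and for the case $\BMdist{X}{\ell_\infty^d}<3/2$ your plan --- Brouwer's fixed--point theorem applied to perturbations of that configuration, followed by the same cascading coordinatewise elimination with slack $c$ --- is exactly the paper's route. You have, however, deferred the two places where the work actually lies: the self-map must be designed so that its fixed points are \emph{exactly} equilateral in $X$ (the paper perturbs only the entries equal to $1$, with one parameter $z^{\set{i,j}}$ per pair, arranged so that $\norm{\vp_i(\vz)-\vp_j(\vz)}_\infty=2+z^{\set{i,j}}$ identically; generic ``small boxes around each point'' do not give a clean fixed-point equation), and the maximality of the fixed-point configuration has to be verified in detail rather than asserted as ``the arithmetic is consistent exactly when $c<3/2$''. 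These gaps are fillable, so part (a) is incomplete rather than wrong.

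Part (b) contains a genuine error: Hall's condition for your witnessing graph is false. In $\ell_\infty^5$ take the $1$-equilateral set of $k=5\leq d=5$ points
$\vp_1=(0,0,\tfrac12,\tfrac12,\tfrac12)$, $\vp_2=(1,0,\tfrac12,\tfrac12,\tfrac12)$, $\vp_3=(0,1,\tfrac12,\tfrac12,\tfrac12)$, $\vq_1=(1,1,1,1,1)$, $\vq_2=(1,1,0,0,0)$ (all ten pairwise $\ell_\infty$-distances equal $1$, and translating so that one point is $\vo$ changes nothing). Here $m_c=0$ and $M_c=1$ for every coordinate $c$, and $\vp_1,\vp_2,\vp_3$ attain an extreme only in coordinates $1$ and $2$, so $N(\set{\vp_1,\vp_2,\vp_3})=\set{1,2}$ has size $2<3$ and no matching saturates the points. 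The set does extend --- for instance by $(1,1,1,0,0)$ --- but the extending point uses coordinate $1$ to witness both $\vp_1$ and $\vp_3$ and coordinate $2$ to witness both $\vp_1$ and $\vp_2$: a single coordinate, once you commit $\vx^{(c)}$ to $m_c+1$ or $M_c-1$, serves \emph{all} the points sitting at the corresponding extreme, which is precisely what a matching forbids. The correct combinatorial statement is therefore not a system of distinct representatives but the paper's covering lemma: the joins $\uproduct{A_n^0}{A_n^1}$ (points at the minimum versus points at the maximum of coordinate $n$) cover $\binom{[k]}{2}$, and if $d\geq k$ one can choose one side of each join so that the chosen sides cover $[k]$; this is proved by induction on $k$. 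Your reduction of the lower bound to Hall's theorem does not work and must be replaced by an argument of this covering type.
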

Theorem~\ref{thm:linfty} will follow from Proposition~\ref{prop:linftybrouwer} in Section~\ref{section:brouwer} and Proposition~\ref{prop:linftygraph} in Section~\ref{section:graphs}.
A~similar result holds for the $\ell_p^d$ spaces, $1\leq p<\infty$.
\begin{theorem}\label{thm:lpall}
For all $p\in[1,\infty)$ and all $d\in\N$, $m(\ell_p^d)\leq d+1$.
\end{theorem}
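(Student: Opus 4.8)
The plan is to exhibit an explicit maximal equilateral set of $d+1$ points in $\ell_p^d$; the cases $d\le 2$ are already settled (there $m=d+1$), so assume $d\ge 3$. Put $\vp_0:=\vo$ and, for $i\in[d]$,
\[ \vp_i:=\ve_i+t\bigl(\one-\ve_i\bigr), \]
so $\vp_i$ has entry $1$ in coordinate $i$ and entry $t$ in every other coordinate, where $t\in(0,1)$ is chosen so that the two types of pairwise distances coincide, i.e.\ so that
\[ 1+(d-1)t^p=2(1-t)^p. \]
Such a $t$ exists and is unique because the left-hand side increases from $1$ to $d$ while the right-hand side decreases from $2$ to $0$ on $[0,1]$; set $\lambda:=\bigl(1+(d-1)t^p\bigr)^{1/p}>0$. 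A direct computation gives $\norm{\vp_i-\vp_0}_p^p=1+(d-1)t^p$ and $\norm{\vp_i-\vp_j}_p^p=2(1-t)^p$ for distinct $i,j\in[d]$, so $A:=\{\vp_0,\dots,\vp_d\}$ is a $\lambda$-equilateral set, and it has exactly $d+1$ elements since $0<t<1$.

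The real content is that $A$ is maximal. Suppose $\vx\in\ell_p^d$ satisfies $\norm{\vx-\vp_k}_p=\lambda$ for all $k=0,\dots,d$. For distinct $i,j\in[d]$ the vectors $\vp_i$ and $\vp_j$ agree in every coordinate except $i$ and $j$, so $\norm{\vx-\vp_i}_p^p=\norm{\vx-\vp_j}_p^p$ collapses to $\Phi(\vx^{(i)})=\Phi(\vx^{(j)})$, where $\Phi(u):=\abs{u-1}^p-\abs{u-t}^p$. For $1<p<\infty$ one verifies, by treating $(-\infty,t]$, $[t,1]$ and $[1,\infty)$ separately, that $\Phi$ is strictly decreasing, hence injective; therefore $\vx^{(1)}=\dots=\vx^{(d)}$, i.e.\ $\vx=c\one$ for some $c$. (When $p=1$, $\Phi$ is only nonincreasing, being constant on $(-\infty,t]$ and on $[1,\infty)$, so either all coordinates of $\vx$ lie in $(-\infty,t]$, or all lie in $[1,\infty)$, or $\vx=c\one$; the first case is impossible since then $\norm{\vx-\vp_i}_1=\lambda$ forces $\sum_j\vx^{(j)}=0$ and hence $\norm{\vx}_1\le 2(d-1)t<\lambda$, and the second since then $\norm{\vx}_1\ge d>\lambda$. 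Alternatively, for $p=1$ one may just quote Proposition~\ref{pettygen}, as $\ell_1^d\cong\ell_1^{d-1}\oplus_1\R$ and $\ell_1^{d-1}$ has smooth points on its unit sphere.)

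So $\vx=c\one$, and $\norm{\vx-\vp_0}_p=\abs{c}d^{1/p}=\lambda$ gives $\abs{c}=\lambda d^{-1/p}$, in particular $c\ne0$. If $c<0$ then $\norm{\vx-\vp_1}_p^p=(1+\abs{c})^p+(d-1)(t+\abs{c})^p>1+(d-1)t^p=\lambda^p$, contradicting $\norm{\vx-\vp_1}_p=\lambda$. If $c>0$ then $c^p=\lambda^p/d=\bigl(1+(d-1)t^p\bigr)/d$ forces $t<c<1$, so $\norm{\vx-\vp_1}_p^p=(1-c)^p+(d-1)(c-t)^p$; since $(1-c)^p<(1-t)^p$ and $\lambda^p=2(1-t)^p$, to obtain the contradiction $\norm{\vx-\vp_1}_p<\lambda$ it suffices to prove the single inequality
\[ (d-1)(c-t)^p\le(1-t)^p. \]
Using $c=\bigl(2(1-t)^p/d\bigr)^{1/p}$ and taking $p$-th roots, this rearranges to $t\ge v(d,p)$ with the explicit value $v(d,p)=\alpha/(\alpha+\beta)$, $\alpha=\bigl(2(d-1)/d\bigr)^{1/p}-1$, $\beta=(d-1)^{1/p}-1$; and since $s\mapsto 1+(d-1)s^p-2(1-s)^p$ is increasing and vanishes at $s=t$, it is enough to check $1+(d-1)v^p\le 2(1-v)^p$.

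Verifying this last inequality is the one genuinely delicate point; everything else is routine convexity and monotonicity. It holds with equality when $p=1$ (indeed $v(d,1)=1/(d+1)=t$ there, so the contradiction in that case comes entirely from the strict inequality $1-c<1-t$), and its slack shrinks to $0$ as $p\downarrow 1$, which is why the step is not merely formal. Granting it, $A$ is a maximal equilateral set and $m(\ell_p^d)\le\card A=d+1$.
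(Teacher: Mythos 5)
Your configuration is, up to an affine map, the same one the paper uses: translating the paper's $S\cup\set{\vq}$ from Proposition~\ref{prop:lpcalc} by $\mu\vj$ and rescaling by $1/(1+\mu)$ gives exactly your $\set{\vp_0,\dots,\vp_d}$ with $t=\mu/(1+\mu)$. The first half of your argument --- forcing an equidistant point onto the diagonal via the monotonicity of $\Phi$ (which is Lemma~\ref{lemma:calc}), the $p=1$ case split, and the case $c<0$ --- is fine. But the entire quantitative content of the theorem sits in the one inequality you do not prove, $(d-1)(c-t)^p\le(1-t)^p$, which you explicitly defer (``Granting it''). That step is precisely the analogue of the paper's estimate $\lambda+\mu>(2/d)^{1/p}$ in Proposition~\ref{prop:lpcalc}, which the paper establishes by showing $\lambda>d^{-1/p}$ and $\mu\ge(2^{1/p}-1)d^{-1/p}$, the latter by a short contradiction argument again using Lemma~\ref{lemma:calc}. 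So as written the proof has a genuine gap exactly where the work is.

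Moreover, the reduced form you propose to verify is wrong. Rearranging $(d-1)^{1/p}\bigl((2/d)^{1/p}(1-t)-t\bigr)\le 1-t$ gives $\alpha(1-t)\le(d-1)^{1/p}\,t$, i.e.\ $t\ge\alpha/(\alpha+\beta+1)$, not $\alpha(1-t)\le\beta t$; you subtracted $t$ from the wrong side. The difference is not cosmetic: for $p=2$, $d=3$ one has $t=1/4$ while $\alpha/(\alpha+\beta)=(\sqrt{4/3}-1)/(\sqrt{4/3}+\sqrt{2}-2)\approx 0.272$, so the inequality you ask to be checked is \emph{false} there, even though the sufficient condition $(d-1)(c-t)^p\le(1-t)^p$ does hold ($2(0.362)^2\approx 0.263\le(0.75)^2$). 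Your sanity check at $p=1$ is an artifact of the same slip: with the correct threshold, $v(d,1)=(d-2)/(d^2-2)<1/(d+1)=t$ strictly, and indeed $(d-1)(c-t)=(d-1)/(d+1)<d/(d+1)=1-t$ there, so there is no equality case. The underlying claim is true --- via the affine map above and the convexity of $c\mapsto(1-c)^p+(d-1)(c-t)^p$ on $[t,1]$ it is equivalent to the paper's $\norm{\vp-\vq}_p>2^{1/p}$ --- but you must actually prove the corrected inequality, for instance by adapting the paper's argument for $\mu\ge(2^{1/p}-1)d^{-1/p}$.
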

\begin{theorem}\label{thm:approxlp}
For each $p\in(1,\infty)$ and $d\geq 3$ there exists $c=c(p,d)>1$ such that $m(X)\leq d+1$ for any $d$\dimensional $X$ with $\BMdist{X}{\ell_p^d}<c$.
\end{theorem}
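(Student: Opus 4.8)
The plan is to transplant a maximal equilateral set from $\ell_p^d$ itself. By (the proof of) Theorem~\ref{thm:lpall}, $\ell_p^d$ has a maximal $\lambda$-equilateral set $E_0=\set{\vx_0,\dots,\vx_d}$ of the full size $d+1$; translating, assume $\vx_0=\vo$, so $\lambda=\norm{\vx_1}_p$. After replacing $E_0$ by a nearby equilateral $(d+1)$-tuple in $\ell_p^d$ if necessary --- maximality of an equilateral configuration is an open condition, as noted below --- we may assume moreover that $E_0$ is \emph{nondegenerate} in the sense that the configuration map $\Psi$ introduced below is a submersion there. Writing $X=(\R^d,\nu)$ with $\norm{\vx}_p\le\nu(\vx)\le c\norm{\vx}_p$, it then suffices to produce, for $c$ close enough to~$1$, a maximal $\nu$-equilateral set of $d+1$ points close to $E_0$.

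The reason maximality survives perturbation is elementary. For a norm $\omega$ on $\R^d$, points $\vy_0,\dots,\vy_d\in\R^d$ and $t>0$, put
\[\rho(\omega;\vy_0,\dots,\vy_d,t):=\inf_{\vz\in\R^d}\ \max_{0\le i\le d}\Bigabs{\omega(\vz-\vy_i)-t}.\]
An equilateral $(d+1)$-tuple with common distance $t$ is maximal exactly when $\rho>0$, and since each $\omega(\vz-\vy_i)$ is coercive in $\vz$ --- uniformly over norms $\omega\ge\norm{\cdot}_p$ --- the infimum may be restricted to a fixed ball, so $\rho$ is continuous in all its arguments. As $\rho>0$ at $(\norm{\cdot}_p;E_0,\lambda)$, it remains positive at $(\nu;\widetilde E,\widetilde\lambda)$ whenever $\nu$ is close to $\norm{\cdot}_p$ and the $\nu$-equilateral tuple $\widetilde E$, with common distance $\widetilde\lambda$, is close to $E_0$. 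So everything reduces to finding \emph{some} $\nu$-equilateral $(d+1)$-tuple near $E_0$.

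This is where the hypothesis $1<p<\infty$ is used: $\norm{\cdot}_p$ is $C^1$ on $\R^d\setminus\set{\vo}$, hence the configuration map
\[\Psi_\omega(\vz_1,\dots,\vz_d):=\bigl(\omega(\vz_i-\vz_j)-\omega(\vz_1)\bigr)_{0\le i<j\le d,\ (i,j)\ne(0,1)},\qquad \vz_0:=\vo,\]
is $C^1$ in the $\vz$-variables near $(\vx_1,\dots,\vx_d)$ when $\omega=\norm{\cdot}_p$. A count gives $d^2$ variables against $\binom{d+1}{2}-1<d^2$ equations, and nondegeneracy of $E_0$ says $D\Psi_{\norm{\cdot}_p}$ is onto at $(\vx_1,\dots,\vx_d)$. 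A nearby norm $\nu$ is only $C^0$-close to $\norm{\cdot}_p$, not $C^1$-close, so the classical implicit function theorem does not apply; instead I would choose a complementary subspace $W\subseteq\R^{d^2}$ of dimension $\binom{d+1}{2}-1$ on which $D\Psi_{\norm{\cdot}_p}$ restricts to an isomorphism, restrict both $\Psi_{\norm{\cdot}_p}$ and $\Psi_\nu$ to the affine slice $(\vx_1,\dots,\vx_d)+W$, and observe that $\Psi_\nu$ there is a continuous, $C^0$-small perturbation of a map that is a homeomorphism of a small ball onto a neighbourhood of $\vo$. A Brouwer (topological degree) argument then yields a zero of $\Psi_\nu$ on that slice, i.e.\ the sought-for $\nu$-equilateral $(d+1)$-tuple $\widetilde E$, close to $E_0$, with edge length $\widetilde\lambda$ close to $\lambda$. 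Combined with the previous paragraph, $\widetilde E$ is a maximal $\nu$-equilateral set of $d+1$ points, whence $m(X)\le d+1$.

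The main obstacle is the nondegeneracy of $E_0$: one must know that the $\binom{d+1}{2}-1$ edge-length gradients are linearly independent at $E_0$ --- a kind of ``$\ell_p$-affine independence'' of $\vx_0,\dots,\vx_d$ --- which is automatic for the Euclidean regular simplex but in general has to be read off from the construction behind Theorem~\ref{thm:lpall} (or secured by a further perturbation inside $\ell_p^d$). A secondary technical issue, already flagged above, is that spaces near $\ell_p^d$ need not be smooth, so the smooth implicit function theorem and transversality must be replaced throughout by $C^0$-robust degree arguments. Finally, the restriction $d\ge3$ costs nothing: for $d=2$ one has $m(X)=3$ for every $2$\dimensional $X$.
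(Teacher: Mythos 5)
Your overall architecture (produce a $\nu$-equilateral $(d+1)$-tuple near a maximal one in $\ell_p^d$, then note that maximality is an open condition) is reasonable, and your continuity argument for the functional $\rho$ is correct: coercivity lets you restrict the infimum to a compact ball, so positivity of $\rho$ indeed survives small perturbations of the norm, the points and the common distance. But the proof has a genuine gap exactly where you flag it: the nondegeneracy of $E_0$, i.e.\ the surjectivity of $D\Psi_{\norm{\cdot}_p}$ at $(\vx_1,\dots,\vx_d)$. Everything downstream --- the choice of the slice $W$, the local homeomorphism, the degree argument --- collapses without it, and it is not established. It is not ``automatic'' from the construction behind Theorem~\ref{thm:lpall}: the maximal set there is $\set{\ve_1,\dots,\ve_d,\lambda\vj}$ from Proposition~\ref{prop:lpcalc}, a highly symmetric configuration for which the linear independence of the $\binom{d+1}{2}-1$ edge-length gradients in $\R^{d^2}$ is a concrete computation that nobody has done. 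Nor can it be ``secured by a further perturbation inside $\ell_p^d$'': transversality theorems let you perturb the \emph{map} to make a value regular, not the \emph{point}; perturbing $E_0$ within the zero set of $\Psi_{\norm{\cdot}_p}$ need not reach a regular point (the zero set could consist entirely of critical points near $E_0$), and perturbing the map changes the problem. So as written the proof does not close.

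It is worth seeing how the paper sidesteps precisely this issue. Instead of perturbing a maximal $(d+1)$-point set, it perturbs the $d$-point set $\set{\ve_1,\dots,\ve_d}$ into $X$ by an explicit Brouwer fixed-point construction (Proposition~\ref{prop:lpapprox}) in which the free parameters are the $\binom{d}{2}$ strictly-subdiagonal coordinates of a triangular array --- a parametrization chosen so that well-definedness of the self-map needs only the two-sided $C^0$ bound $\norm{\cdot}\le\norm{\cdot}_p\le R\norm{\cdot}$, with no derivative or transversality input whatsoever. Maximality of the resulting $(d+1)$-point set is then obtained by contradiction and compactness: if spaces $X_n\to\ell_p^d$ all had $m(X_n)\ge d+2$, the perturbed $d$-point sets would extend by two points each, and the limits would give an equilateral set $\set{\ve_1,\dots,\ve_d,\vp,\vq}$ in $\ell_p^d$, contradicting the rigidity statement of Proposition~\ref{prop:lpcalc} that $\set{\ve_i}$ extends in exactly two mutually incompatible ways. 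That rigidity of the \emph{unextended} configuration is the paper's substitute for your unproven nondegeneracy hypothesis. If you can verify the submersivity of $\Psi_{\norm{\cdot}_p}$ at some maximal $(d+1)$-point configuration, your route would also work (and would give an alternative, arguably more quantitative, proof); until then the argument is incomplete.
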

Theorems~\ref{thm:lpall} and \ref{thm:approxlp} will be proved in Section~\ref{section:calculation}.
Our main result is a surprising property of $\ell_p$ where $1\leq p<2$.
It gives many examples of finite and infinite dimensional spaces with finite maximal equilateral sets.
These examples are essentially different from Petty's example alluded to above (which we also generalise in Proposition~\ref{pettygen} below).
\begin{theorem}\label{thm:lp}
For each $p\in[1,2)$ there exist $C=C(p)\in\N$ and $d_0=d_0(p)\in\N$ such that $m(\ell_p^d)\leq C$ for any $d\geq d_0$, and in fact, for any \textup{(}finite or infinite dimensional\textup{)} normed space $X$ and any $q\in[1,\infty)$, also $m(\ell_p^d\oplus_q X)\leq C$.

When $p\to 2$, we have $C(p)= O(1/(2-p))$ and $d_0(p) = O(1/(2-p))$.
Upper bounds are given in Table~\ref{table} for all~$p\in [1,\frac{\log (23/6)}{\log 2})$.
\end{theorem}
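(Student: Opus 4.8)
The plan is to reduce the $\oplus_q X$ assertion to a statement about $\ell_p^d$ alone, then to construct the required small maximal equilateral set from a Hadamard matrix, and finally to close a quantitative gap whose width governs $C(p)$.

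\emph{Reduction.} Regard $\ell_p^d$ as the subspace $\set{(\vx,\vo):\vx\in\ell_p^d}$ of $\ell_p^d\oplus_q X$. If $A\subseteq\ell_p^d$ is $\lambda$-equilateral and $(\vz,\vz_X)$ is at distance $\lambda$ from $(\va,\vo)$ for every $\va\in A$, then $\norm{\vz-\va}_p^q+\norm{\vz_X}_X^q=\lambda^q$, so $\norm{\vz-\va}_p$ equals the value $\rho:=(\lambda^q-\norm{\vz_X}_X^q)^{1/q}\leq\lambda$ for every $\va$: that is, $\vz$ is equidistant from all of $A$ at some distance $\rho\leq\lambda$. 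Hence it is enough to produce, for each $p\in[1,2)$, a constant $C(p)$ and, for all $d\geq d_0(p)$, a $\lambda$-equilateral set $A\subseteq\ell_p^d$ with $\card A\leq C(p)$ such that no point of $\ell_p^d$ is equidistant from all of $A$ at a common distance $\leq\lambda$. Indeed the case $\rho=\lambda$ then gives $m(\ell_p^d)\leq C(p)$, and the case $\rho<\lambda$ (take $\vz_X$ with $\norm{\vz_X}_X^q=\lambda^q-\rho^q$) gives $m(\ell_p^d\oplus_q X)\leq C(p)$ for all $q\in[1,\infty)$ and all~$X$.

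\emph{Construction.} Let $\vh_1,\dots,\vh_n$ be the rows of a Hadamard matrix of order $n=n(p)$, normalised so that its first column is $\one$, and embed them in the first $n$ coordinates of $\ell_p^d$. Since Sylvester's construction supplies Hadamard matrices of every order $2^k$, $n$ can be taken at most twice any prescribed value. Distinct rows are orthogonal $\pm1$-vectors, hence agree in exactly $n/2$ coordinates, so $\norm{\vh_i-\vh_j}_p^p=n2^{p-1}=:\lambda^p$. Setting $\psi(s):=\abs{s+1}^p-\abs{s-1}^p$, a direct computation gives $\norm{\vz-\vh_i}_p^p=c_0(\vz)-\tfrac12\ipr{\vh_i}{\Psi(\vz)}$, where $\Psi(\vz):=(\psi(\vz^{(1)}),\dots,\psi(\vz^{(n)}))$ and $c_0(\vz)$ does not depend on $i$; hence $\vz$ is equidistant from $\vh_1,\dots,\vh_n$ iff $\ipr{\vh_i}{\Psi(\vz)}$ is constant in $i$, which by the orthogonality of the rows and the normalisation forces $\Psi(\vz)\in\R\ve_1$, i.e.\ $\psi(\vz^{(k)})=0$ for $2\leq k\leq n$. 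As $\psi$ vanishes only at $0$ (and, for $1<p<2$, is strictly increasing), this means $\vz^{(2)}=\dots=\vz^{(n)}=0$; thus the set of points equidistant from the $\vh_i$ is the coordinate subspace $Y=\setbuilder{\vz}{\vz^{(2)}=\dots=\vz^{(n)}=0}$, on which the common distance $\rho$ satisfies $\rho^p=\norm{\vz-\ve_1}_p^p+(n-1)$. So $\ve_1$ is the unique point equidistant from the $\vh_i$ at the least possible distance $(n-1)^{1/p}$, and since $(n-1)^{1/p}<\lambda$ the rows by themselves are not maximal. A point at distance $\lambda$ from all $\vh_i$ must be of the form $\ve_1+\vw$ with $\vw\in Y$ and $\norm{\vw}_p^p=\lambda^p-(n-1)=:\mu^p$, so the auxiliary points that we adjoin are points $\ve_1+\vw_1,\dots,\ve_1+\vw_k$ with the $\vw_j\in Y$ lying on the sphere $\norm{\vw}_p=\mu$ and pairwise at distance $\norm{\vw_i-\vw_j}_p=\lambda$.

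\emph{Maximality and the estimate.} Write $\vz=\ve_1+\vz'$ with $\vz'\in Y$. A point equidistant from all of $A=\set{\vh_1,\dots,\vh_n}\cup\set{\ve_1+\vw_j}$ at a distance $\leq\lambda$ is precisely a $\vz'\in Y$ with $\norm{\vz'}_p\leq\mu$ and $\norm{\vz'-\vw_j}_p^p-\norm{\vz'}_p^p=n-1$ for every $j$; in particular such a $\vz'$ is equidistant from all the $\vw_j$. When $p<\log_2 3$ a single auxiliary point already suffices: for $\norm{\vz'}_p\leq\mu$ the triangle inequality gives $\norm{\vz'-\vw_1}_p^p-\norm{\vz'}_p^p\leq(\norm{\vz'}_p+\mu)^p-\norm{\vz'}_p^p\leq(2\mu)^p-\mu^p=\mu^p(2^p-1)$, and $\mu^p(2^p-1)=\bigl(n(2^{p-1}-1)+1\bigr)(2^p-1)<n-1$ once $(2^{p-1}-1)(2^p-1)<1$ — that is, $p<\log_2 3$ — and $n$ is large enough; then no admissible $\vz'$ exists and $A$ (with $n+1$ points) works. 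For $p\in[\log_2 3,2)$ one has instead $\mu^p(2^p-1)\geq n-1$, so a single auxiliary point can no longer block an extension, and one must adjoin several $\vw_j$ — themselves arranged with a Hadamard-type structure inside $Y$ and augmented by their own auxiliary points — so that the resulting system of equidistance equations is incompatible with $\norm{\vz'}_p\leq\mu$; verifying this reduces to an explicit inequality in $p$ and the orders used, which can be checked by hand for small parameters to yield Table~\ref{table}. The orders, hence $C(p)$ and $d_0(p)$, have to grow like $1/(2-p)$: the ratio of circumradius to edge, $\bigl((n-1)/(n2^{p-1})\bigr)^{1/p}$, tends to $2^{1/p-1}$, so the deficit $1-2^{1/p-1}=\Theta(2-p)$ that the auxiliary points must make up shrinks as $p\to2$ while each Hadamard layer supplies only a bounded amount of room; the case $p=1$ is covered by the same argument with $n=4$, since $\psi$ still vanishes only at~$0$. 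I expect the genuine obstacle to lie in this last step for $p$ near $2$: keeping the whole configuration bounded in size and $\lambda$-equilateral while forcing the high-dimensional, non-compact equidistant locus to lie entirely outside the $\mu$-ball of $Y$ — equivalently, pushing the attained circumradius past the edge length $\lambda$ using only $O(1/(2-p))$ points.
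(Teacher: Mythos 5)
Your reduction of the $\oplus_q X$ statement to ``no point of $\ell_p^d$ is equidistant from $A$ at a common distance $\leq\lambda$'' is correct, and your treatment of the range $1\leq p<\frac{\log 3}{\log 2}$ is essentially complete and even attractive: taking the rows of a normalised Hadamard matrix as the equilateral set, identifying the equidistant locus via $\psi(s)=\abs{s+1}^p-\abs{s-1}^p$ and the invertibility of $H$, and blocking the extension with a single auxiliary point $\ve_1+\vw_1$ using $(2\mu)^p-\mu^p<n-1$ is a clean argument that in this range replaces both Proposition~\ref{prop17} and the $(k_1,k_2)=(2,2)$ instance of Proposition~\ref{prop20} (at the cost of a larger $n$, hence larger $C(p)$, as $p\to\frac{\log 3}{\log 2}$).

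The genuine gap is the case $p\in[\frac{\log 3}{\log 2},2)$, which is most of the theorem and contains the entire asymptotic claim $C(p),d_0(p)=O(1/(2-p))$. There your argument stops at a plan --- adjoin several $\vw_j$ ``arranged with a Hadamard-type structure inside $Y$ and augmented by their own auxiliary points'' so that the equidistance system becomes infeasible --- without constructing the configuration, verifying that the $\vw_j$ can simultaneously lie on the sphere $\norm{\vw}_p=\mu$ and be pairwise at distance $\lambda$ (a nontrivial compatibility between $\mu/\lambda$ and the circumradius-to-edge ratio of whatever structure you place in $Y$), or showing that the resulting system really excludes every $\vz'$ with $\norm{\vz'}_p\leq\mu$; you flag this yourself as ``the genuine obstacle.'' The paper resolves exactly this difficulty by a different device: two Hadamard simplices in orthogonal coordinate blocks, each \emph{doubled} by Kronecker-multiplying with a pair of unit vectors $\vu,\vv\in\ell_p^2$ satisfying $\norm{\vu\pm\vv}_p=\lambda_i$ (Lemma~\ref{lemma:lpplane}), which turns the inter-copy distance into a free parameter $\lambda_i\in[2^{1-1/p},2^{1/p}]$; non-sphericity is then forced by the strict-convexity argument of Lemmas~\ref{lemma:scplane}--\ref{lemma:equid} (pinning any candidate centre to the origin) together with $\alpha_1\neq\alpha_2$, and the whole thing reduces to the explicit feasibility conditions \eqref{one}--\eqref{three}, from which both Table~\ref{table} and the $O(1/(2-p))$ asymptotics (via the density of Hadamard orders) follow. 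Without some substitute for this tunable doubling step and the accompanying feasibility analysis, your proposal does not prove the theorem for $p\geq\frac{\log 3}{\log 2}$, nor the stated asymptotics, nor the table.
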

In particular, we obtain the following surprising corollary.
\begin{corollary}\label{cor:lp}
For any set $\Gamma$ and any $p\in[1,2)$, $m(\ell_p(\Gamma))$ is bounded above by a constant depending only on $p$.
\end{corollary}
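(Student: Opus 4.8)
The plan is to deduce the corollary from Theorem~\ref{thm:lp} by splitting off a finite\dimensional $\ell_p$-summand of exactly the size demanded by that theorem. Fix $p\in[1,2)$ and let $C=C(p)\in\N$ and $d_0=d_0(p)\in\N$ be the constants provided by Theorem~\ref{thm:lp}. Given an arbitrary set $\Gamma$, I would distinguish two cases according to whether $\Gamma$ is large enough.

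First suppose $\card{\Gamma}\geq d_0$; this includes every infinite $\Gamma$. Choose a subset $\Gamma_0\subseteq\Gamma$ with $\card{\Gamma_0}=d_0$ and put $\Gamma_1:=\Gamma\setminus\Gamma_0$. Since for every $\vx\colon\Gamma\to\R$ we have $\norm{\vx}_p^p=\sum_{n\in\Gamma_0}\abs{\vx^{(n)}}^p+\sum_{n\in\Gamma_1}\abs{\vx^{(n)}}^p$, the restriction map $\vx\mapsto(\vx|_{\Gamma_0},\vx|_{\Gamma_1})$ is an isometric isomorphism from $\ell_p(\Gamma)$ onto $\ell_p^{d_0}\oplus_p\ell_p(\Gamma_1)$. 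As $m$ is an isometric invariant, applying Theorem~\ref{thm:lp} with $X=\ell_p(\Gamma_1)$ (a normed space, possibly infinite\dimensional, which is precisely what the theorem allows), $q=p$ and $d=d_0$ gives $m(\ell_p(\Gamma))=m(\ell_p^{d_0}\oplus_p\ell_p(\Gamma_1))\leq C$.

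It remains to treat the case $\card{\Gamma}=d<d_0$, where $\ell_p(\Gamma)$ is isometric to $\ell_p^d$. Here Theorem~\ref{thm:lpall} already yields $m(\ell_p^d)\leq d+1\leq d_0$. Combining the two cases shows $m(\ell_p(\Gamma))\leq\max\set{C,d_0}$ for every set $\Gamma$, a bound depending only on $p$, as claimed.

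There is essentially no obstacle to carry out: the one point deserving a moment's attention is that $m$ transfers across an \emph{isometry} but not across mere isomorphisms or small Banach--Mazur perturbations, so one must invoke that the coordinate splitting $\Gamma=\Gamma_0\sqcup\Gamma_1$ produces a genuine isometry $\ell_p(\Gamma)\cong\ell_p^{d_0}\oplus_p\ell_p(\Gamma_1)$ --- which is immediate from the definitions of the $\ell_p$-norm and of the $\ell_p$-sum.
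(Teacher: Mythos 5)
Your proof is correct and follows essentially the same route as the paper's: both reduce to Theorem~\ref{thm:lp} via the isometric coordinate-splitting of $\ell_p(\Gamma)$, yours by splitting off a $d_0$-element subset $\Gamma_0$ and the paper's by instead absorbing an $\ell_p^d$ summand into an infinite $\Gamma$. The only substantive difference is the case $\card{\Gamma}<d_0$, where you invoke Theorem~\ref{thm:lpall} to get $m(\ell_p^d)\leq d+1\leq d_0$ while the paper uses the Danzer--Gr\"unbaum bound $m(\ell_p^d)\leq 2^{d_0}$; your choice yields a marginally better overall constant and is legitimate, since Theorem~\ref{thm:lpall} is established independently in Section~\ref{section:calculation}.
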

The asymptotic bounds on $C(p)$ and $d_0(p)$ for $p\to 2$ in the above theorem are close to optimal, as \eqref{+} implies that \[C(p)=\Omega\left(\frac{1}{(2-p)\ln\left((2-p)^{-1}\right)}\right)\quad\text{and}\quad d_0(p)=\Omega\left(\frac{1}{(2-p)\ln\left((2-p)^{-1}\right)}\right)\text{.}\]
\begin{table}
\begin{center}
\begin{tabular}{r@{\;}c@{\;}l|c|c|l}
\multicolumn{3}{c|}{Range of $p$} & $C(p)$ & $d_0(p)$ & \qquad Proof\\[1mm] \hline &&&&&\\[-3mm]
$\displaystyle 1\leq$ & $p$ & $\displaystyle < \frac{\log (5/2)}{\log 2}\approx 1.32$ & $5$ & $4$ & Proposition~\ref{prop17}\\[4mm]
& $p$ & $\displaystyle = \frac{\log (5/2)}{\log 2}$ & $6$ & $4$ & Proposition~\ref{prop17}\\[4mm]
$\displaystyle \frac{\log (5/2)}{\log 2} <$ & $p$ & $\displaystyle < \frac{\log 3}{\log 2}\approx 1.58$ & $8$ & $6$ & Prop.~\ref{prop20} with $(k_1,k_2)=(2,2)$\\[4mm]
$\displaystyle \frac{\log 3}{\log 2} \leq$ & $p$ & $\displaystyle \leq \frac{\log (13/4)}{\log 2}\approx 1.70$ & $12$ & $10$ & Prop.~\ref{prop20} with $(k_1,k_2)=(2,4)$\\[4mm]
$\displaystyle \frac{\log (13/4)}{\log 2} <$ & $p$ & $\displaystyle < \frac{\log (7/2)}{\log 2}\approx 1.81$ & $16$ & $14$ & Prop.~\ref{prop20} with $(k_1,k_2)=(4,4)$ \\[4mm]
$\displaystyle \frac{\log (7/2)}{\log 2}\leq$ & $p$ & $\displaystyle \leq \frac{\log (29/8)}{\log 2}\approx 1.86$ & $24$ & $22$ & Prop.~\ref{prop20} with $(k_1,k_2)=(4,8)$ \\[4mm]
$\displaystyle \frac{\log (29/8)}{\log 2} <$ & $p$ & $\displaystyle < \frac{\log (15/4)}{\log 2}\approx 1.907$ & $32$ & $30$ & Prop.~\ref{prop20} with $(k_1,k_2)=(8,8)$ \\[4mm]
$\displaystyle \frac{\log (15/4)}{\log 2}\leq$ & $p$ & $\displaystyle \leq \frac{\log (91/24)}{\log 2}\approx 1.923$ & $40$ & $38$ & Prop.~\ref{prop20} with $(k_1,k_2)=(8,12)$ \\[4mm]
$\displaystyle \frac{\log (91/24)}{\log 2} <$ & $p$ & $\displaystyle < \frac{\log (23/6)}{\log 2}\approx 1.939$ & $48$ & $46$ & Prop.~\ref{prop20} with $(k_1,k_2)=(12,12)$ \\[4mm]
\end{tabular}\\[2mm]
\end{center}
\caption{Values of $C(p)$ and $d_0(p)$ in Theorem~\ref{thm:lp}}\label{table}
\end{table}
Theorem~\ref{thm:lp} and Corollary~\ref{cor:lp} will be proved in Section~\ref{section:hadamard} below.

We do not know of any $d$\dimensional space $X$ for which $m(X)>d+1$.
The above theorems give some evidence for the following conjecture:
\begin{conjecture}
 For any $d$\dimensional normed space $X$, $m(X)\leq d+1$.
\end{conjecture}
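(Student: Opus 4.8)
To attempt this conjecture one could proceed as follows. To prove $m(X)\le d+1$ it suffices to exhibit, in a given $d$\dimensional space $X$, a maximal $1$-equilateral set of size $d+1$. For a $1$-equilateral set $A=\set{\vx_0,\dots,\vx_k}$ the set $K(A):=\bigcap_i B(\vx_i,1)$ is a full\dimensional compact convex body (it contains the centroid of $A$, which is at distance $\le d/(d+1)<1$ from each $\vx_i$), and $A$ can be extended to a larger equilateral set — equivalently, $A$ fails to be maximal — exactly when there is a point at distance $1$ from every $\vx_i$, i.e.\ when $\bigcap_i S(\vx_i,1)=\setbuilder{\vy\in K(A)}{\min_i\norm{\vy-\vx_i}=1}$ is nonempty. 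Putting $\rho(A):=\max_{\vy\in K(A)}\min_i\norm{\vy-\vx_i}$, which always lies in $[0,1]$, we see that $A$ is maximal if and only if $\rho(A)<1$. Thus the conjecture reduces to showing that every $d$\dimensional $X$ contains an affinely independent $1$-equilateral $(d+1)$-tuple $A$ with $\rho(A)<1$. In $\ell_2^d$ one may take $A$ to be a regular simplex, for which $K(A)$ collapses to the single circumcenter and $\rho(A)$ equals the circumradius, which is $<1$; the conjecture asserts that a sufficiently "spread-out" equilateral simplex of this kind exists for every norm.

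I would then study the functional $\rho$ on the space $\CE_{d+1}$ of affinely independent $1$-equilateral $(d+1)$-tuples of $X$, which is nonempty and connected by the continuity/extension argument used for the $2$\dimensional case together with Petty's extension of equilateral triples, and on which $\rho$ is continuous since $K(A)$ varies Hausdorff-continuously with $A$. The crux is to show $\rho$ is not identically $1$. One natural route is an iterative perturbation: starting from any $A_0\in\CE_{d+1}$, if $\rho(A_0)=1$ choose $\vz\in\bigcap_i S(\vx_i,1)$, move the configuration slightly within $\CE_{d+1}$ so as to strictly decrease $\rho$, and iterate; a compactness argument together with a degree-theoretic or fixed-point input, in the spirit of the use of Brouwer's theorem in Proposition~\ref{prop:linftybrouwer}, would be needed to ensure a limiting configuration genuinely achieves $\rho<1$ rather than stalling. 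An alternative is a proof by contradiction: if $\rho\equiv 1$ on $\CE_{d+1}$ then every equilateral $(d+1)$-set extends, greedy extension never halts before size $d+2$, and one would try to combine this with the Danzer--Gr\"unbaum bound of $2^d$ and the convexity of the bodies $K(A)$ to reach an incompatibility.

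The hard part, and the reason the statement is left as a conjecture, is precisely this step of ruling out $\rho\equiv 1$: in an arbitrary norm there is no substitute for the Euclidean circumcenter computation, the geometry of $\CE_{d+1}$ and the behaviour of $\rho$ depend delicately on the norm, and Petty's examples (Proposition~\ref{pettygen}) show that one cannot in general keep extending a given equilateral set. A successful proof would have to exploit some structural feature of maximal equilateral configurations in general normed spaces that the methods of this paper — Hadamard matrices, graph colourings, and Brouwer's theorem applied to specific near-extremal norms — do not yet capture; indeed, even the weaker assertion that $m(X)$ is bounded above by a function of $d$ alone appears to be open.
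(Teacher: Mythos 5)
The statement you are addressing appears in the paper only as a conjecture: the paper offers no proof of it, and your proposal does not supply one either --- as you yourself concede, the decisive step (ruling out $\rho\equiv 1$) is left open. So there is no paper proof to compare routes with; what I can do is point out that your framework already has genuine gaps before it reaches the step you admit is missing.

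First, your ``reduction'' is not a reduction. Exhibiting an affinely independent $1$-equilateral $(d+1)$-tuple $A$ with $\rho(A)<1$ would indeed give $m(X)\leq d+1$, but it is a strictly stronger demand than the conjecture. A maximal equilateral set witnessing $m(X)\leq d+1$ need not have $d+1$ points: Proposition~\ref{pettygen} produces $d$\dimensional spaces with maximal equilateral sets of size $4$ for every $d\geq 3$, so insisting on configurations of full size $d+1$ may be solving a harder problem than necessary. More seriously, the nonemptiness of your space $\CE_{d+1}$ --- the existence of $d+1$ equidistant points in an arbitrary $d$\dimensional normed space --- is itself a well-known open problem, settled only in low dimensions; the general lower bounds (e.g.\ in \cite{Swanepoel2008}) fall far short of $d+1$, and Petty's extension theorem only takes you from $3$ points to $4$. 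Your assertion that $\CE_{d+1}$ is nonempty and connected ``by the continuity/extension argument'' therefore has no support, and the whole functional-analytic study of $\rho$ on $\CE_{d+1}$ rests on it. Finally, your closing claim that even boundedness of $m(X)$ by a function of $d$ ``appears to be open'' is false: the Danzer--Gr\"unbaum result quoted in Section~\ref{section:introduction} gives $m(X)\leq 2^d$ for every $d$\dimensional~$X$.
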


\section{A generalisation of Petty's example}\label{section:petty}
Petty \cite{Petty} showed that $m(\ell_2^d\oplus_1\R)=4$ for all $d\geq 2$.
In his argument $\ell_2^d$ can in fact be replaced by any, not necessarily finite\dimensional, normed space which has a \emph{smooth point} on its unit sphere, that is, a point where the norm is G\^ateaux differentiable, or equivalently, a point on the unit sphere which has only one supporting hyperplane \cite{JL, Roberts}.
By a classical theorem of Mazur \cite{Mazur1933} any separable normed space enjoys this property \cite[Theorem~10]{Roberts}.
\begin{proposition}\label{pettygen}
Let $X$ be a normed space of dimension at least $2$ with a norm which has a smooth point on its unit sphere.
Then $m(X\oplus_1\R)=4$.
\end{proposition}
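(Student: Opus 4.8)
The bound $m(X\oplus_1\R)\geq 4$ needs nothing new: since $\dim(X\oplus_1\R)=1+\dim X\geq 3$, the facts recalled in the introduction---that equilateral sets of size at most $2$ extend by a continuity argument, and those of size $3$ extend by Petty's topological argument---show that no equilateral set of size at most $3$ in $X\oplus_1\R$ is maximal. So the whole task is to exhibit a maximal equilateral set of size exactly $4$, and the plan is to write one down explicitly. Fix a smooth point $\vu$ on the unit sphere of $X$, and in $X\oplus_1\R$ consider
\[
\vp_1=(\vo,1),\qquad \vp_2=(\vo,-1),\qquad \vp_3=(\vu,0),\qquad \vp_4=(-\vu,0).
\]
With $\norm{(\vx,t)}=\norm{\vx}_X+\abs{t}$ and $\norm{\vu}_X=1$, one checks at once (e.g.\ $\norm{\vp_1-\vp_3}=\norm{\vu}_X+1=2$ and $\norm{\vp_3-\vp_4}=\norm{2\vu}_X=2$) that all six pairwise distances equal $2$; this part uses no hypothesis on $X$ whatsoever.

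It remains to prove that $\set{\vp_1,\vp_2,\vp_3,\vp_4}$ lies on no sphere of radius $2$, which by the observation following the definition of maximal equilateral set is precisely maximality. So suppose $\vz=(\vw,s)$ satisfies $\norm{\vz-\vp_i}=2$ for $i=1,2,3,4$, aiming for a contradiction. Subtracting the equations for $\vp_1$ and $\vp_2$ gives $\abs{s-1}=\abs{s+1}$, hence $s=0$, and then the equation for $\vp_1$ gives $\norm{\vw}_X=1$. The equations for $\vp_3$ and $\vp_4$ then read $\norm{\vw-\vu}_X=\norm{\vw+\vu}_X=2$, so, since $\norm{\vw}_X=\norm{\vu}_X=1$, equality holds in the triangle inequality for each of the pairs $\vw,-\vu$ and $\vw,\vu$. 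By Hahn--Banach this yields norm-one functionals $f,g\in X^{*}$ with $f(\vw)=g(\vw)=1$ and $f(\vu)=-1$, $g(\vu)=1$.

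The contradiction---really the only substantive step---now comes from smoothness: $g$ and $-f$ are both norm-one functionals taking the value $1$ at $\vu$, hence both support the unit ball of $X$ at the smooth point $\vu$, so $g=-f$ by uniqueness of the supporting functional there; but then $1=g(\vw)=-f(\vw)=-1$. Therefore no such $\vz$ exists, $\set{\vp_1,\vp_2,\vp_3,\vp_4}$ is a maximal equilateral set, and $m(X\oplus_1\R)\leq 4$; together with the lower bound this gives $m(X\oplus_1\R)=4$. I do not anticipate any real difficulty beyond a careful use of the triangle-equality/supporting-functional dictionary; note that the hypothesis $\dim X\geq 2$ enters only through the lower bound (to ensure $\dim(X\oplus_1\R)\geq 3$), the displayed construction being valid also when $\dim X=1$.
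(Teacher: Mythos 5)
Your proof is correct and follows essentially the same route as the paper: the identical four-point configuration $\set{(\vo,\pm1),(\pm\vu,0)}$, the same reduction of a putative fifth point to $s=0$, $\norm{\vw}=1$, $\norm{\vw\pm\vu}=2$, and a contradiction with smoothness at $\vu$. The only cosmetic difference is the endgame: the paper argues geometrically that the unit ball of the plane spanned by $\vw$ and $\vu$ is the parallelogram with vertices $\pm\vw,\pm\vu$ and hence has two supporting lines at $\vu$, whereas you pass directly via equality in the triangle inequality to two distinct norm-one functionals supporting the ball at $\vu$ --- two renderings of the same smoothness contradiction.
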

\begin{proof}
Since $X\oplus_1\R$ is at least $3$\dimensional, $m(X\oplus_1\R)\geq 4$ by Petty's theorem mentioned in Section~\ref{section:introduction}.
For the upper bound, let $\vu\in X$ be a smooth point on the unit sphere of $X$.
Let $A:=\set{(\vo,1),(\vo,-1),(\vu,0),(-\vu,0)}$.
Then $A$ is a $2$-equilateral set in $X\oplus_1\R$.
If there exists a point $(\vx,r)\in X\oplus_1\R$ at distance $2$ to each point in $A$, then it easily follows that $r=0$, $\norm{\vx}=1$ and $\norm{\vx\pm\vu}=2$.
Then $\pm\vx$, $\pm\vu$ and $\pm\frac12\vx\pm\frac12\vu$ are all unit vectors in $X$ and by convexity the unit ball of the subspace $Y$ of $X$ generated by $\vu$ and $\vx$ is the parallelogram with vertices $\pm \vu$ and $\pm \vx$.
In particular, the unit ball of $Y$ has more than one supporting line at $\vu$, and so by the Hahn-Banach theorem, the unit ball of $X$ has more than one supporting hyperplane at~$\vu$.
\end{proof}

As special cases, $m(\ell_1)=m(\ell_1^d)=4$ for $d\geq 3$.
However, if $\Gamma$ is an uncountable set, then it is well known that no point on the unit sphere of $\ell_1(\Gamma)$ is smooth.
(This can be seen as follows:
Let $\vu\in\ell_1(\Gamma)$ have norm $1$.
Then $\vu\colon\Gamma\to\R$ has countable support $U\subset\Gamma$, say.
Choose any $i\in\Gamma\setminus U$.
Then $\norm{\vu\pm\ve_i}_1=2$ and the intersection of the unit ball of $\ell_1(\Gamma)$ with the subspace generated by $\vu$ and $\ve_i$ is the parallelogram with vertices $\pm\vu$ and $\pm\ve_i$, as in the proof of Proposition~\ref{pettygen}.)
Nevertheless, Theorem~\ref{thm:lp} gives the upper bound $m(\ell_1(\Gamma))\leq 5$ for any set $\Gamma$.

\section{Applying Brouwer's fixed point theorem}\label{section:brouwer}
\begin{proposition}\label{prop:linftybrouwer}
If $\BMdist{X}{\ell_\infty^d}<3/2$, then there exists a maximal equilateral set with $d+1$ elements. As a consequence, $m(X)\leq d+1$.
\end{proposition}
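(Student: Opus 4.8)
The plan is to produce, whenever $\BMdist{X}{\ell_\infty^d}<3/2$, an explicit equilateral set of $d+1$ points by perturbing a fixed maximal equilateral set of $\ell_\infty^d$, carrying out the perturbation with Brouwer's fixed-point theorem, and then checking by hand that maximality survives. First I would normalise: by the definition of the Banach--Mazur distance we may assume $X=(\R^d,\norm{\cdot})$ with $\norm{\vx}_\infty\leq\norm{\vx}\leq c\norm{\vx}_\infty$ for all $\vx$ and some fixed $c\in[1,3/2)$. Put $\vp_0:=\vo$ and $\vp_k:=\ve_1+\dots+\ve_{k-1}+2\ve_k$ for $k\in[d]$. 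A short computation shows that $\set{\vp_0,\dots,\vp_d}$ is $2$-equilateral in $\ell_\infty^d$, and it is even maximal there: if $\norm{\vx-\vp_k}_\infty=2$ for every $k$, then the conditions forced by $\vp_0$ and by each $\vp_k$ confine $\vx$ to $[0,2]^d$, and letting $m$ be the largest index with $\vx^{(m)}=2$, every coordinate of $\vx-\vp_m$ has absolute value $<2$ (the $m$-th vanishes, the earlier ones lie in $[-1,1]$, the later ones lie in $[0,2)$ since $m$ is largest), contradicting $\norm{\vx-\vp_m}_\infty=2$. This already yields $m(\ell_\infty^d)\leq d+1$ and disposes of the case $c=1$.

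For general $c$ I would look for $\vq_0:=\vo$ and $\vq_k:=\vp_k+\vr_k$ with $\norm{\vr_k}_\infty\leq\rho$, where $\rho=\rho(c,d)$ is small. Keeping $\rho<1/4$ guarantees that for all $0\leq i<j\leq d$ the $j$-th coordinate remains the unique largest in absolute value of $\vq_i-\vq_j$ (it is $\geq2-2\rho$ in absolute value, the others $\leq1+2\rho$), so on the compact convex box $K:=\setbuilder{(\vr_1,\dots,\vr_d)}{\norm{\vr_k}_\infty\leq\rho}$ the pairwise norms $\norm{\vq_i-\vq_j}$ depend smoothly on the configuration and stay comparable to $2$. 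Using the lower-triangular shape of the $\vp_k$ one can match the $\binom{d+1}{2}$ coordinates being varied bijectively with the $\binom{d+1}{2}$ pairs $\set{i,j}$ so that each such coordinate is the dominant coordinate of the difference of its associated pair, and then define a continuous self-map $F$ of $K$ that moves each such coordinate, in the direction that reduces the discrepancy $\norm{\vq_i-\vq_j}-\mu$ of its associated distance, by an amount proportional to that discrepancy, where $\mu$ is the mean of all pairwise distances. Since the nudge of a coordinate is proportional to a single discrepancy, a fixed point of $F$ has all $\binom{d+1}{2}$ discrepancies equal to zero, i.e.\ is an equilateral set; so by Brouwer's theorem such a configuration exists, provided $F$ maps $K$ into $K$. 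Establishing this self-map property is exactly where $c<3/2$ is used: one has to bound the correction needed to kill all discrepancies --- whose size is controlled by the oscillation of $\norm{\cdot}$ over $K$ and by the fact that the initial distances $\norm{\vp_i-\vp_j}=2$ can be distorted in $X$ only by a factor at most $c$ --- against the available room $\rho$.

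Granting the resulting equilateral set $\set{\vq_0,\dots,\vq_d}$, with common distance $\mu$ close to $2$, it remains to show it is maximal. If some $\vx$ had $\norm{\vx-\vq_k}=\mu$ for all $k$, then from $\norm{\vx-\vq_k}_\infty\leq\mu\leq c\norm{\vx-\vq_k}_\infty$ and $\norm{\vq_k-\vp_k}_\infty\leq\rho$ we get $\mu/c-\rho\leq\norm{\vx-\vp_k}_\infty\leq\mu+\rho$ for each $k$; the upper bounds confine $\vx$ to a box barely larger than $[0,2]^d$. Mimicking the $\ell_\infty^d$ argument, one takes $m$ to be the largest index with $\vx^{(m)}$ above a suitable threshold close to $\norm{\vx}_\infty\ (\geq\mu/c)$ and checks that every coordinate of $\vx-\vp_m$ has absolute value $<\mu/c-\rho$ (the $m$-th because $2-\mu/c<2/3$, the earlier ones because $\mu/c>1$, the later ones by the choice of $m$). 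Then $\norm{\vx-\vq_m}_\infty<\mu/c$, so $\mu=\norm{\vx-\vq_m}\leq c\norm{\vx-\vq_m}_\infty<\mu$, a contradiction; here only $c<2$ is used, so the binding constraint remains the self-map estimate above. Hence $\set{\vq_0,\dots,\vq_d}$ is a maximal equilateral set of size $d+1$ and $m(X)\leq d+1$.

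The main obstacle is the construction of $F$ and the proof that it is a self-map of $K$ for every $c<3/2$: one must choose the perturbation radius $\rho$ and the proportionality constants so that $F$ never leaves the box while retaining that its only fixed points are equilateral, and it is precisely the quantitative estimate for the size of the required correction that forces the threshold to be $3/2$.
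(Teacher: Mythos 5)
Your overall strategy is the same as the paper's: start from an explicitly maximal $2$-equilateral simplex in $\ell_\infty^d$ with a staircase structure, perturb it inside a box, produce an equilateral set in $X$ via Brouwer, and then redo the maximality argument by hand for the perturbed set. Your base set, its maximality in $\ell_\infty^d$, and the final maximality check are all in the right spirit (the last one glosses over the possibility that the coordinate realising $\norm{\vx}_\infty$ is negative, but that can be repaired for small $\rho$). The problem is that the one step where the constant $3/2$ actually enters --- defining the self-map $F$ and proving it maps $K$ into $K$ --- is not carried out; you explicitly defer it as ``the main obstacle''. That is not a detail: it is the content of the proposition. Worse, the design you propose makes this step genuinely problematic. (a) You move each designated coordinate ``in the direction that reduces the discrepancy $\norm{\vq_i-\vq_j}-\mu$'', but for a general norm within factor $c$ of $\ell_\infty$ there is no guaranteed direction: $\norm{\vq_i-\vq_j}$ need not be monotone in a single coordinate, so the sign of the nudge is not well defined. (b) An incremental map of the form $x\mapsto x\pm t\,(\text{discrepancy})$ is a self-map of $[-\rho,\rho]$ only if the nudge points inward whenever $x=\pm\rho$; shrinking $t$ does not help, and your sketch gives no way to verify this sign condition, whereas composing with a projection onto $K$ would destroy the property that fixed points are equilateral. (c) Quantitatively, your box gives each coordinate room $4\rho<1$, while the discrepancies to be absorbed can be of size up to about $2(c-1)+2\rho(c+1)$, which tends to $1+5\rho$ as $c\to 3/2$; so even the crude ``correction fits in the room'' count is at best marginal and certainly not established.

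The paper sidesteps all three issues with two choices you should compare against. First, the perturbation is one-sided: the staircase entries $1$ become $1+\vz^{\set{n,i}}$ with $\vz^{\set{n,i}}\in[0,1]$, so that the dominant coordinate of $\vp_i(\vz)-\vp_j(\vz)$ is exactly $2+\vz^{\set{i,j}}=\norm{\vp_i(\vz)-\vp_j(\vz)}_\infty$ and each pair owns its own perturbation parameter with room of length $1$, not $4\rho<1$. Second, the map is a direct assignment rather than a nudge:
\[
\fhi^{\set{i,j}}(\vz)\;=\;\norm{\vp_i(\vz)-\vp_j(\vz)}_\infty-\norm{\vp_i(\vz)-\vp_j(\vz)}\;=\;2+\vz^{\set{i,j}}-\norm{\vp_i(\vz)-\vp_j(\vz)}\text{,}
\]
which is automatically nonnegative and is bounded above by $\bigl(1-\tfrac1D\bigr)\bigl(2+\vz^{\set{i,j}}\bigr)<\bigl(1-\tfrac23\bigr)\cdot 3=1$ precisely when $D<3/2$. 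The self-map property is thus a one-line computation with no sign or monotonicity issues, and a fixed point satisfies $\norm{\vp_i(\vz_0)-\vp_j(\vz_0)}=2$ for every pair. Until you either adopt such a direct assignment or supply the missing inward-pointing and magnitude estimates for your $F$, the proof has a gap at its central step.
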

\begin{proof}
As preparation for the proof, we first exhibit a $2$-equilateral set $A$ of $d+1$ points in $\ell_\infty$ such that $S(\vo,1)$ is the unique sphere (of any radius) that passes through $A$.
For $i\in[d+1]$ and $n\in[d]$, let 
\[ \vp_{i}^{(n)}:=\begin{cases}
              -1 &\text{if $n=i$},\\
              0 &\text{if $n>i$},\\
              1 &\text{if $n<i$},\\
             \end{cases}
\]
and set $A=\set{\vp_1,\dots,\vp_{d+1}}$.
Suppose that $A\subset S(\vx,r)$ for some $\vx\in X$ and $r>0$.
Then for each $n\in[d]$, $\abs{x^{(n)}\pm 1}\leq r$, hence $\abs{x^{(n)}}\leq r-1$ and $r\geq 1$.
If we can show that $r=1$, we would also get $\vx=\vo$.
Suppose for the sake of contradiction that $r>1$.

We first show that $\vx=(r-1,r-1,\dots,r-1)$.
If not, let $m$ be the smallest index such that $\vx^{(m)}\neq r-1$.
Then for all $n<m$, $\abs{\vx^{(n)}-\vp_m^{(n)}}=\abs{r-1-1}<r$, and for $n>m$, $\abs{\vx^{(n)}-\vp_m^{(n)}}=\abs{x^{(n)}}\leq r-1$.
It follows that $r=\norm{\vx-\vp_m}_\infty=\abs{\vx^{(m)}+1}$.
Thus $\vx^{(m)}=-1\pm r$, which contradicts $\abs{x^{(n)}}\leq r-1$ and the choice of $m$.
Therefore, $\vx=(r-1,r-1,\dots,r-1)$.

Since $r=\norm{\vx-\vp_{d+1}}_\infty=\abs{r-1-1}<r$, we have obtained a contradiction.
Therefore, $A$ lies on a unique sphere.
As this sphere has radius $1$, $A$ is maximal equilateral.
This shows that $m(\ell_\infty^d)\leq d+1$.

We now prove the general result.
Let $D:=\BMdist{X}{\ell_\infty^d}<3/2$, and assume without loss of generality that $X=(\R^d,\norm{\cdot})$ such that
\begin{equation}\label{eq:norms}
 \norm{\vx} \leq \norm{\vx}_\infty \leq D\norm{\vx}\text{ for all }\vx\in\R^d.
\end{equation}
We will prove that $m(X)\leq d+1$ by finding a perturbation of the above set $A$ that will be maximal equilateral in $X$.
We use Brouwer's fixed-point theorem as in \cite{Brass1999} and \cite{Swanepoel2008}.
Consider the space $\R^{\binom{[d+1]}{2}}$ of vectors indexed by unordered pairs of elements from $[d+1]$.
Write $\vz^{\set{i,j}}$ for the coordinate of $\vz\in\R^{\binom{[d+1]}{2}}$ indexed by $\set{i,j}$.
Given $\vz\in I:=[0,1]^{\binom{[d+1]}{2}}\subset\R^{\binom{[d+1]}{2}}$, define $d+1$ points $\vp_1(\vz),\dots,\vp_{d+1}(\vz)\in\R^d$ as follows.
For $i\in[d+1]$ and $n\in[d]$, let
\begin{equation}\label{eq:pidef}
 \vp_{i}^{(n)}(\vz):=\begin{cases}
              -1 &\text{if $n=i$},\\
              0 &\text{if $n>i$},\\
              1+\vz^{\set{n,i}} &\text{if $n<i$}.\\
             \end{cases}
\end{equation}
Define the mapping $\fhi:I\to I$ by setting
\[ \fhi^{\set{i,j}}(\vz):=\norm{\vp_i(\vz)-\vp_j(\vz)}_\infty-\norm{\vp_i(\vz)-\vp_j(\vz)}=2+z^{\set{i,j}}-\norm{\vp_i(\vz)-\vp_j(\vz)}\]
for each $\set{i,j}\in\binom{[d+1]}{2}$.
Then by \eqref{eq:norms}, 
$\fhi^{\set{i,j}}(\vz)\geq 0$
and
\begin{align*}
 \fhi^{\set{i,j}}(\vz) &\leq \norm{\vp_i(\vz)-\vp_j(\vz)}_\infty-\frac{1}{D}\norm{\vp_i(\vz)-\vp_j(\vz)}_\infty\\
 &=\left(1-\frac{1}{D}\right)(2+\vz^{\set{i,j}})< \left(1-\frac{2}{3}\right)(2+1)=1\text{.}
\end{align*}
Thus $\fhi$ is well-defined.
It is clearly continuous, and so has a fixed point $\vz_0\in I$ by Brouwer's theorem:
\[  2+\vz_0^{\set{i,j}}-\norm{\vp_i(\vz_0)-\vp_j(\vz_0)}=\vz_0^{\set{i,j}}\quad\text{for all } \set{i,j}\in\binom{[d+1]}{2}. \]
Therefore, $\set{\vp_1(\vz_0),\dots,\vp_{d+1}(\vz_0)}$ is $2$-equilateral in $X$.

From now on, write $\vp_i$ for $\vp_i(\vz_0)$.
Suppose that $\set{\vp_1,\dots\vp_{d+1}}$ is not maximal equilateral. 
Then there exists $\vx\in X$ such that $\norm{\vx-\vp_i}=2$ for each $i\in[d+1]$.
We first show that all $\abs{\vx^{(n)}}<2$, then that all $\vx^{(n)}\geq 1$, and then obtain a contradiction.

By \eqref{eq:norms},
\[ 2\leq\norm{\vx-\vp_i}_\infty\leq 2D\quad\text{for each $i\in[d+1]$}.\]
In particular, $\abs{\vx^{(n)}-\vp_n^{(n)}}=\abs{\vx^{(n)}+1}\leq 2D$, which gives $\vx^{(n)}\leq 2D-1<2$ for all $n\in[d]$.
Also, $\abs{\vx^{(n)}-\vp_{n+1}^{(n)}}\leq 2D$, that is, $\abs{\vx^{(n)}-1-\vz_0^{\set{n,n+1}}}\leq 2D$, which gives \[\vx^{(n)}\geq 1+\vz_0^{\set{n,n+1}}-2D>-2\text{.}\]
It follows that $\abs{\vx^{(n)}}<2$ for all $n\in[d]$.

Next let $m$ be the smallest index such that $\vx^{(m)}<1$.
For all $n<m$,
\[ \vx^{(n)}-\vp_m^{(n)} = \vx^{(n)} - (1 + \vz_0^{\set{n,m}}) \geq -\vz_0^{\set{n,m}} \geq -1\]
and
\[ \vx^{(n)}-\vp_m^{(n)} < 2 - (1 + \vz_0^{\set{n,m}}) = 1 -\vz_0^{\set{n,m}} \leq 1\text{,}\]
hence $\abs{\vx^{(n)}-\vp_m^{(n)}}\leq 1$.
For all $n>m$, $\abs{\vx^{(n)}-\vp_m^{(n)}}=\abs{\vx^{(n)}} < 2$.
It follows that $2\leq\norm{\vx-\vp_m}_\infty=\abs{\vx^{(m)}+1}$.
However, $\vx^{(m)}+1<2$ by assumption and $\vx^{(m)}+1 > -2+1$, so we obtain a contradiction.

It follows that $\vx^{(n)}\geq 1$ for all $n\in[d]$.
Then $\abs{\vx^{(n)}-\vp_{d+1}^{(n)}} = \abs{\vx^{(n)} - 1 - \vz_0^{\set{n,d+1}}}\leq1$ since $1\leq \vx^{(n)}<2$ and $0\leq\vz_0^{\set{n,d+1}}\leq 1$.
It follows that $\norm{\vx-\vp_{d+1}}_\infty\leq 1$, a contradiction.
\end{proof}

\section{Using graphs}\label{section:graphs}
In their studies of neighbourly axis-parallel boxes, Zaks \cite{Zaks1985} and Alon \cite{Alon1997} modelled a certain geometric problem as a problem about covering a complete graph by complete bipartite subgraphs.
We use the same technique when showing that an arbitrary equilateral set of at most $d$ points in $\ell_\infty^d$ can be extended to a larger equilateral set.
Our proof in fact shows that any collection of at most $d$ pairwise touching, axis-parallel boxes in $\R^d$ can be extended to a pairwise touching collection of $d+1$ axis-parallel boxes.

%As usual, the edges of a graph are considered to be unordered pairs.
The graph-theoretical result needed is the following simple lemma which states in particular that if the edges of a complete graph on $k$ vertices are covered by at least $k$ complete bipartite subgraphs $G_i$, then for each $G_i$ we may choose one of its two parts such that the chosen parts cover all $k$ vertices.
For technical reasons we have to allow one, but not more than one, of the classes of the complete bipartite subgraphs to be empty.
%Let $K_k$ denote the complete graph with vertex set $[k]$ and edge set $\binom{[k]}{2}$.
Thus we define the \emph{join} of $A,B\subseteq [k]$ to be $\uproduct{A}{B}:=\setbuilder{\set{a,b}}{a\in A, b\in B}$ whenever $A\cap B=\emptyset$ and $A\cup B\neq\emptyset$.
%If $A,B\neq\emptyset$ then $\uproduct{A}{B}$ is the set of edges of a complete bipartite subgraph of $K_k$.
%As the definition implies, $\uproduct{A}{B}=\uproduct{B}{A}$.
\begin{lemma}\label{lemma:graph}
Let $d\geq k\geq 1$ be integers.
For each $n\in[d]$ let $A_n^0,A_n^1\subseteq[k]$ be given such that $A_n^0\cap A_n^1=\emptyset$ and $A_n^0\cup A_n^1\neq\emptyset$.
Suppose that $\bigcup_{n\in[d]}(\uproduct{A_n^0}{A_n^1})=\binom{[k]}{2}$.
Then there exist $\sigma_1,\dots,\sigma_d\in\set{0,1}$ such that $A_1^{\sigma_1}\cup\dots\cup A_d^{\sigma_d}=[k]$. 
\end{lemma}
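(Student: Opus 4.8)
The plan is to translate the statement into one about covering the discrete cube $\{0,1\}^{[d]}$ by subcubes, and then to reduce it to a Graham--Pollak-type counting fact.

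For $v\in[k]$ and $n\in[d]$, the vertex $v$ lies in at most one of $A_n^0,A_n^1$. Given $\sigma=(\sigma_1,\dots,\sigma_d)\in\{0,1\}^{[d]}$, we have $v\notin\bigcup_n A_n^{\sigma_n}$ if and only if $\sigma_n=1-i$ for every $n$ with $v\in A_n^i$; hence the set $Q_v\subseteq\{0,1\}^{[d]}$ of ``choices missing $v$'' is a subcube whose codimension equals the number of bicliques containing $v$. What must be shown is that $\bigcup_{v\in[k]}Q_v\neq\{0,1\}^{[d]}$, for then any $\sigma$ outside this union satisfies $A_1^{\sigma_1}\cup\dots\cup A_d^{\sigma_d}=[k]$.

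The key observation is that the hypothesis $\bigcup_{n}(\uproduct{A_n^0}{A_n^1})=\binom{[k]}{2}$ is \emph{equivalent} to the subcubes $Q_v$ being pairwise disjoint: indeed $Q_u\cap Q_v=\emptyset$ iff $Q_u$ and $Q_v$ impose opposite constraints on some coordinate $n$, iff $u$ and $v$ lie in opposite classes of the $n$-th biclique, iff the edge $\{u,v\}$ is covered. So if the $Q_v$ were to cover $\{0,1\}^{[d]}$, they would in fact partition it. Moreover each biclique has a nonempty class, so every coordinate $n\in[d]$ is constrained by some $Q_v$ (take $v$ in that nonempty class); thus this would be a partition of $\{0,1\}^{[d]}$ into $k$ subcubes in which all $d$ coordinates actually occur. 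Invoking the fact that any such partition has more than $d$ parts, and recalling $k\leq d$, we reach a contradiction; hence $\bigcup_v Q_v\neq\{0,1\}^{[d]}$, as required.

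It remains to prove the subcube-partition fact: \emph{a partition of $\{0,1\}^{[d]}$ into subcubes that together use all $d$ coordinates has at least $d+1$ parts.} I would argue by induction on $d$ ($d\leq 1$ being trivial). Pick any coordinate, say the $d$-th; some part constrains it. If \emph{every} part constrains the $d$-th coordinate, the two hyperplanes $\{x_d=0\}$ and $\{x_d=1\}$ inherit subcube partitions of $\{0,1\}^{[d-1]}$ whose used-coordinate sets $U_0,U_1$ satisfy $U_0\cup U_1=[d-1]$; the inductive hypothesis applied to each (after projecting away the coordinates it does not use) gives at least $(|U_0|+1)+(|U_1|+1)\geq d+1$ parts in all. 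The delicate case --- and the step I expect to be the main obstacle --- is when some part is ``straddling'', i.e.\ does not constrain the $d$-th coordinate: such parts occur in both hyperplanes and must not be counted twice. Here one uses that the straddling parts partition a \emph{proper} subset of $\{0,1\}^{[d-1]}$ (otherwise no part would constrain coordinate $d$), which is what supplies the extra parts needed on each side; alternatively, the whole statement is equivalent to the classical fact that a minimally unsatisfiable CNF formula in which all $d$ variables occur has at least $d+1$ clauses, which one could simply cite. Apart from this straddling bookkeeping, every step above is routine.
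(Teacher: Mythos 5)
Your reduction is correct and takes a genuinely different route from the paper. The paper proves the lemma by a short induction on $k$: it isolates a vertex $k$ covered by some join $\uproduct{A_d^0}{A_d^1}$ (after first disposing of the degenerate case where joins of the form $\uproduct{\emptyset}{\set{j}}$ handle every vertex), deletes $k$ from the remaining classes, and applies the induction hypothesis to $[k-1]$; it is entirely self-contained and elementary. You instead dualise: the choices $\sigma$ form the cube $\set{0,1}^{[d]}$, the ``bad'' choices for each vertex $v$ form a subcube $Q_v$, the covering hypothesis is exactly pairwise disjointness of the $Q_v$, and the conclusion reduces to the fact that $k\leq d$ pairwise disjoint subcubes using all $d$ coordinates cannot cover the cube. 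All of these translations check out (including the point that disjointness plus covering would force a genuine partition, and that each coordinate is constrained because $A_n^0\cup A_n^1\neq\emptyset$), and the final fact is indeed equivalent, via complementation of subcubes into clauses, to Tarsi's lemma that a minimally unsatisfiable CNF has more clauses than variables: your partition is minimally unsatisfiable precisely because the $Q_v$ are disjoint and nonempty. What your approach buys is a conceptual link to a known theorem (and to the Graham--Pollak circle of ideas the paper itself alludes to via Zaks and Alon); what it costs is dependence on that theorem, whose proof is not shorter than the paper's direct induction.

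The one place where your write-up does not close on its own is exactly the step you flagged. In the straddling case of your induction, applying the inductive hypothesis to the two hyperplanes $\set{x_d=0}$ and $\set{x_d=1}$ and adding gives $2\abs{F}+\abs{G_0}+\abs{G_1}\geq d+1$, where $F$ is the set of straddling parts and $G_0,G_1$ the parts confined to each hyperplane; this falls short of $\abs{F}+\abs{G_0}+\abs{G_1}\geq d+1$ by $\abs{F}$, and the observation that $F$ partitions a proper subset of $\set{0,1}^{[d-1]}$ only yields $G_0,G_1\neq\emptyset$, not the missing $\abs{F}$ parts. So the self-contained induction does not work as sketched, and the appeal to Tarsi's lemma (or a Hall-type argument) is not an optional shortcut but a necessary ingredient. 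With that citation made explicit, your proof is complete.
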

\begin{proof}
We use induction on $k\in\N$.
The case $k=1$ is trivial, so we assume that $k\geq 2$ and that the theorem holds for $k-1$.
If for each $j\in[k]$, some $\uproduct{A_n^0}{A_n^1}$ equals $\uproduct{\emptyset}{\set{j}}$, take $\sigma_n$ such that $A_n^{\sigma_n}=\set{j}$ for each of these $n$.
Then choose all remaining $\sigma_n$ arbitrarily to obtain the required covering of $[k]$.

Thus assume without loss of generality that $\uproduct{\emptyset}{\set{k}}$ does not occur as a $\uproduct{A_n^0}{A_n^1}$.
Without loss of generality, $\set{1,k}\in\uproduct{A_d^0}{A_d^1}$ (note $k\geq 2$).
Thus $k\in A_d^{\sigma_d}$ for some $\sigma_d\in\set{0,1}$.
Set $B_n^0:=A_n^0\setminus\set{k}$ and $B_n^1:=A_n^1\setminus\set{k}$ for each $n\in[d-1]$.
Then $\bigcup_{n\in[d-1]}(\uproduct{B_n^0}{B_n^1})=\binom{[k-1]}{2}$.
Since all $\uproduct{A_n^0}{A_n^1}$ are different from $\uproduct{\emptyset}{\set{k}}$, we still have $B_n^0\cup B_n^1\neq\emptyset$, so we may apply the induction hypothesis to obtain $B_n^{\sigma_n}$, $n\in[d-1]$, with union $[k-1]$.
Together with $A_d^{\sigma_d}$ we have obtained the required covering of $[k]$. 
\end{proof}

\begin{proposition}\label{prop:linftygraph}
$m(\ell_\infty^d)\geq d+1$.
\end{proposition}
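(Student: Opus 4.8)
The plan is to prove the equivalent statement that every $\lambda$-equilateral set $A=\set{\vp_1,\dots,\vp_k}\subset\ell_\infty^d$ with $k\le d$ fails to be maximal, by exhibiting an extra point $\vx\in\R^d$ with $\norm{\vx-\vp_i}_\infty=\lambda$ for all $i$; this is exactly what $m(\ell_\infty^d)\ge d+1$ amounts to. (The case $k\le 1$ is immediate, so assume $1\le k\le d$.) The engine will be Lemma~\ref{lemma:graph}: thinking of the balls $B(\vp_i,\lambda/2)$ as axis-parallel cubes of common edge length $\lambda$, cubes $i$ and $j$ touch exactly when $\abs{\vp_i^{(n)}-\vp_j^{(n)}}=\lambda$ for some coordinate $n$ (call the pair \emph{split} by $n$), and since $A$ is $\lambda$-equilateral every pair is split by at least one coordinate.

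The key step is to encode, for each coordinate, its splitting pattern as a legal pair of classes for the lemma. For $n\in[d]$ put $M_n:=\max_i\vp_i^{(n)}$ and $m_n:=\min_i\vp_i^{(n)}$; equilaterality forces $M_n-m_n\le\lambda$. Let $A_n^1:=\setbuilder{i\in[k]}{\vp_i^{(n)}=M_n}$, and let $A_n^0:=\setbuilder{i\in[k]}{\vp_i^{(n)}=m_n}$ if $M_n-m_n=\lambda$ but $A_n^0:=\emptyset$ otherwise. Then $A_n^1\neq\emptyset$ and $A_n^0\cap A_n^1=\emptyset$, so $\uproduct{A_n^0}{A_n^1}$ is defined, and I claim it equals the set of pairs split by coordinate $n$. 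The nontrivial inclusion uses that $\vp_i^{(n)}-\vp_j^{(n)}=\lambda$ forces every $\vp_l^{(n)}$ into $[\vp_j^{(n)},\vp_i^{(n)}]$, whence $\vp_i^{(n)}=M_n$, $\vp_j^{(n)}=m_n$ and $M_n-m_n=\lambda$. Therefore $\bigcup_{n\in[d]}(\uproduct{A_n^0}{A_n^1})=\binom{[k]}{2}$, and Lemma~\ref{lemma:graph} (with $d\ge k\ge 1$) produces $\sigma_1,\dots,\sigma_d\in\set{0,1}$ with $A_1^{\sigma_1}\cup\dots\cup A_d^{\sigma_d}=[k]$.

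It then remains to read off $\vx$ and check the two distance bounds: put $\vx^{(n)}:=M_n-\lambda$ if $\sigma_n=1$ and $\vx^{(n)}:=m_n+\lambda$ if $\sigma_n=0$. Since $m_n\le\vp_i^{(n)}\le M_n$ and $0\le M_n-m_n\le\lambda$, a one-line interval estimate gives $\vx^{(n)}-\vp_i^{(n)}\in[-\lambda,0]$ when $\sigma_n=1$ and $\vx^{(n)}-\vp_i^{(n)}\in[0,\lambda]$ when $\sigma_n=0$, so $\norm{\vx-\vp_i}_\infty\le\lambda$ for every $i$. For the matching lower bound, fix $i$ and choose $n$ with $i\in A_n^{\sigma_n}$: if $\sigma_n=1$ then $\vp_i^{(n)}=M_n$, while if $\sigma_n=0$ then $A_n^0\neq\emptyset$, so $M_n-m_n=\lambda$ and $\vp_i^{(n)}=m_n$; either way $\abs{\vx^{(n)}-\vp_i^{(n)}}=\lambda$. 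Hence $\norm{\vx-\vp_i}_\infty=\lambda$ for all $i$, and since this is nonzero, $\vx\notin A$, so $A\cup\set{\vx}$ is a strictly larger $\lambda$-equilateral set and $A$ was not maximal.

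I expect the only genuine subtlety to be the treatment of coordinates with $M_n-m_n<\lambda$, which split no pair yet must still feed the lemma a legal pair of classes: setting $A_n^0=\emptyset$ makes $\uproduct{A_n^0}{A_n^1}=\emptyset$ while $A_n^0\cup A_n^1\neq\emptyset$, and the inequality $M_n-m_n\le\lambda$ is precisely what prevents the resulting value $\vx^{(n)}$ from ever violating $\abs{\vx^{(n)}-\vp_i^{(n)}}\le\lambda$, whichever side $\sigma_n$ selects. Everything else is routine interval arithmetic.
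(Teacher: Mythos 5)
Your proof is correct and takes essentially the same route as the paper's: both encode each coordinate's splitting pattern as a pair of classes feeding Lemma~\ref{lemma:graph} (allowing one empty class for coordinates that split no pair) and then read off the new equidistant point from the selectors $\sigma_1,\dots,\sigma_d$. The only difference is cosmetic: the paper first normalises so that $\lambda=1$ and, after a translation, all points lie in $[0,1]^d$, taking the classes to be $\setbuilder{i}{\vp_i^{(n)}=0}$ and $\setbuilder{i}{\vp_i^{(n)}=1}$ and the new point to be $(1,\dots,1)-(\sigma_1,\dots,\sigma_d)$, whereas you work directly with the unnormalised extremes $m_n$ and $M_n$.
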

\begin{proof}
We show that any $1$-equilateral set $\set{\vp_1,\dots,\vp_k}\subset\ell_\infty^d$ of size at most $k\leq d$ can be extended.
Without loss of generality, $k\geq 1$.

Since $\abs{\vp_i^{(n)}-\vp_j^{(n)}}\leq 1$ for all $\set{i,j}\in\binom{[k]}{2}$ and $n\in[d]$, we may assume after a suitable translation that all $\vp_i\in[0,1]^d$.
For each $n\in[d]$, define $A_n^0:=\setbuilder{i}{\vp_i^{(n)}=0}$ and $A_n^1:=\setbuilder{i}{\vp_i^{(n)}=1}$.
Again by making a suitable translation we may assume that each $A_n^0\cup A_n^1\neq\emptyset$.

Since $\set{\vp_1,\dots,\vp_k}$ is $1$-equilateral, each $\set{i,j}\in\binom{[k]}{2}$ is in some $\uproduct{A_n^0}{A_n^1}$, $n\in[d]$.
Indeed, since $\norm{\vp_i-\vp_{j}}_\infty=1$, there exists an $n\in[d]$ with $\abs{\vp_i^{(n)}-\vp_{j}^{(n)}}=1$.
Since $0\leq \vp_i^{(n)}, \vp_{j}^{(n)} \leq 1$, it follows that $\set{\vp_i^{(n)}, \vp_{j}^{(n)}}=\set{0,1}$, which gives $\set{i,j}\in \uproduct{A_n^0}{A_n^1}$.

By Lemma~\ref{lemma:graph} $A_1^{\sigma_1}\cup\dots\cup A_d^{\sigma_d}=[k]$ for some $\sigma_1,\dots,\sigma_d\in\set{0,1}$.
Define $\vq=(1,1,\dots,1)-(\sigma_1,\dots,\sigma_d)$.
We show that for each $i\in[k]$, $\norm{\vp_i-\vq}_\infty = 1$.
Since $\vq\in[0,1]^d$, $\norm{\vp_i-\vq}_\infty\leq 1$.
There exists $n\in[d]$ such that $i\in A_n^{\sigma_n}$, i.e., $\vp_i^{(n)}=\sigma_n$.
It follows that $\abs{\vp_i^{(n)}-\vq^{(n)}}=1$, which gives $\norm{\vp_i-\vq}_\infty=1$.
\end{proof}

\section{A calculation}\label{section:calculation}
%We omit the simple proof of the following lemma.
Convexity of the function $x\mapsto\abs{x}^p$ for $p\geq 1$ readily implies the following lemma.
\begin{lemma}\label{lemma:calc}
For any $p\in[1,\infty)$ and $\lambda>0$ the function $f(x)=\abs{x+\lambda}^p-\abs{x}^p$, $x\in\R$, is increasing, and strictly increasing if $p>1$.
\end{lemma}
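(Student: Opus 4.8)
\emph{Proposal.} The plan is to reduce the monotonicity of $f$ to the convexity of the function $g(t):=\abs{t}^p$ on $\R$. For $p\geq 1$ this $g$ is convex; for $p>1$ it is in addition strictly convex and of class $C^1$, with derivative $g'(t)=p\,t\abs{t}^{p-2}$ (interpreting $g'(0)=0$), which is strictly increasing on $\R$.

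First I would dispose of the case $p>1$ by differentiation. Since $g\in C^1(\R)$, so is $f(x)=g(x+\lambda)-g(x)$, and $f'(x)=g'(x+\lambda)-g'(x)>0$ because $g'$ is strictly increasing and $x+\lambda>x$ for all $x$; hence $f$ is strictly increasing. For the remaining case $p=1$ (and, if one prefers a uniform argument, for all $p\geq 1$) I would argue directly from convexity: fixing $x_1<x_2$, the desired inequality $f(x_1)\leq f(x_2)$ is equivalent to $g(x_1+\lambda)+g(x_2)\leq g(x_2+\lambda)+g(x_1)$. If $x_1+\lambda\leq x_2$ this simply says that the slope of the chord of $g$ over $[x_1,x_1+\lambda]$ does not exceed the slope over $[x_2,x_2+\lambda]$, which is immediate from convexity since the two intervals have equal length $\lambda$ and occur in this order. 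If instead $x_1+\lambda>x_2$, then $x_1<x_2\leq x_1+\lambda<x_2+\lambda$, and I would write both $x_2$ and $x_1+\lambda$ as convex combinations of the endpoints $x_1$ and $x_2+\lambda$; the point is that the two weight pairs are complementary, since $(x_2-x_1)+\lambda=(x_2+\lambda)-x_1$, so that $x_2=(1-s)x_1+s(x_2+\lambda)$ and $x_1+\lambda=s\,x_1+(1-s)(x_2+\lambda)$ with $s=(x_2-x_1)/(x_2+\lambda-x_1)\in(0,1)$. Applying convexity to each and adding the two inequalities yields exactly $g(x_2)+g(x_1+\lambda)\leq g(x_1)+g(x_2+\lambda)$.

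For strictness when $p>1$ the differentiation argument already suffices; alternatively one observes that in the convexity argument all interpolation weights lie strictly between $0$ and $1$ (and in the non-overlapping case the two chords are distinct), so strict convexity of $g$ forces every inequality used to be strict, giving $f(x_1)<f(x_2)$. The only place that needs any care is the overlapping subcase $x_1+\lambda>x_2$: one must check that the two interpolation weights genuinely add up to $1$, but this is the short identity $(x_2-x_1)+\lambda=(x_2+\lambda)-x_1$ noted above, and it is really the entire content of the proof.
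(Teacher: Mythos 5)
Your argument is correct. Note that the paper explicitly omits the proof of this lemma (``We omit the simple proof''), so there is nothing to compare against; what you have written is a complete and natural way to supply the missing details. The reduction to convexity of $g(t)=\abs{t}^p$ handles $p=1$ cleanly (where differentiation alone is awkward because $g$ is not differentiable at $0$), the identity $(x_2-x_1)+\lambda=(x_2+\lambda)-x_1$ correctly yields the two complementary convex combinations in the overlapping case, and the $C^1$ argument for $p>1$ gives strictness; all steps check out.
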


\begin{proposition}\label{prop:lpcalc}
For any $p \in (1,\infty)$ and $d\geq 3$, the $2^{1/p}$-equilateral set of standard unit vectors $S=\setbuilder{\ve_i}{i\in[d]}$ in $\ell_p^d$ can be extended in exactly two ways to equilateral sets $S\cup\set{\vp}$ and~$S\cup\set{\vq}$.
Furthermore, $\norm{\vp-\vq}_p>2^{1/p}$.
\end{proposition}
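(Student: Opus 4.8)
The plan is to first pin down the two extending points explicitly, and then to prove the distance bound by a collinearity trick rather than by direct estimation.

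\emph{Step 1 (the two extensions).} Any $\vx$ for which $S\cup\set{\vx}$ is equilateral must satisfy $\norm{\vx-\ve_i}_p=2^{1/p}$ for all $i$, since the distances within $S$ already force the common distance to be $2^{1/p}$. Writing $\norm{\vx-\ve_i}_p^p=2$ as $\sum_n\abs{\vx^{(n)}}^p-\abs{\vx^{(i)}}^p+\abs{\vx^{(i)}-1}^p=2$, we see that $g(\vx^{(i)})=2-\sum_n\abs{\vx^{(n)}}^p$ is independent of $i$, where $g(x):=\abs{x-1}^p-\abs{x}^p$. By Lemma~\ref{lemma:calc} applied to $x\mapsto\abs{x+1}^p-\abs{x}^p$ (of which $g$ is the reflection $x\mapsto g(-x)$), $g$ is strictly decreasing because $p>1$; hence all coordinates of $\vx$ are equal and $\vx=c\one$ for some $c\in\R$. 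Then $S\cup\set{\vx}$ is equilateral exactly when $F(c):=\abs{c-1}^p+(d-1)\abs{c}^p=2$. The function $F$ is strictly convex and coercive with $F(0)=1<2$, so $F=2$ has exactly two roots $c_1<c_2$, and $F(0)<2$ forces $c_1<0<c_2$. This gives the ``exactly two ways'' assertion, with $\vp=c_1\one$ and $\vq=c_2\one$.

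\emph{Step 2 (the distance bound).} The key point is that, since $c_1<0<c_2$, the origin $\vo$ lies in the open segment between $\vp$ and $\vq$, whence $\norm{\vp-\vq}_p=\norm{\vp}_p+\norm{\vq}_p$. Now on one hand the triangle inequality gives $\norm{\vp}_p\geq\norm{\vp-\ve_1}_p-\norm{\ve_1}_p=2^{1/p}-1$. On the other hand, from $(d-1)\abs{c_2}^p\leq F(c_2)=2$ and $d\geq 3$ we get $c_2^p\leq 2/(d-1)\leq1$, so $0<c_2\leq1$, hence $(1-c_2)^p\leq 1$ and therefore $(d-1)c_2^p=2-(1-c_2)^p\geq 1$; this yields $\norm{\vq}_p^p=d\,c_2^p\geq d/(d-1)>1$. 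Combining, $\norm{\vp-\vq}_p>(2^{1/p}-1)+1=2^{1/p}$.

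\emph{Where the difficulty lies.} I expect the tempting computational route — set $s=-c_1$, $t=c_2$, use $(1+s)^p+(d-1)s^p=2$ and $(1-t)^p+(d-1)t^p=2$, and try to bound $d(s+t)^p$ below — to be the main trap: every naive rearrangement turns out to be \emph{equivalent} to the target inequality, and the inequality is asymptotically tight ($\norm{\vp-\vq}_p\to 2$ as $d\to\infty$), so lossy convexity bounds such as $(s+t)^p\ge s^p+t^p$ do not suffice. The real content is the observation that $\vp$, $\vo$, $\vq$ are collinear with $\vo$ at distance $1$ from every point of $S$; granting that, the argument becomes elementary, and the only remaining care is the easy lower bound $\norm{\vq}_p>1$, which is exactly where the hypothesis $d\ge 3$ enters.
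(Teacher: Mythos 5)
Your proof is correct. Step~1 is essentially the paper's argument: both use Lemma~\ref{lemma:calc} to force all coordinates of an equidistant point to be equal, and then count the roots of $\abs{c-1}^p+(d-1)\abs{c}^p=2$ (the paper by locating the minimum and evaluating at $-1$, $0$, $1$; you by strict convexity together with $F(0)<2$ — the same content). Step~2 is where you genuinely diverge, and your route is simpler. The paper also reduces, via the collinearity of $\vp$, $\vo$, $\vq$, to showing $\norm{\vp}_p+\norm{\vq}_p>2^{1/p}$ (phrased there as $\lambda+\mu>(2/d)^{1/p}$), but it obtains the two individual bounds differently: $\norm{\vq}_p>1$ from the estimate $2<1+d\lambda^p$, and $\norm{\vp}_p\geq 2^{1/p}-1$ by a contradiction argument that feeds the assumed negation back into Lemma~\ref{lemma:calc}. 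Your reverse-triangle-inequality derivation $\norm{\vp}_p\geq\norm{\vp-\ve_1}_p-\norm{\ve_1}_p=2^{1/p}-1$ replaces that contradiction argument in one line (and applies verbatim to any point at distance $2^{1/p}$ from $\ve_1$), while your bound $\norm{\vq}_p^p\geq d/(d-1)$ is slightly sharper than the paper's. The only point needing care — which you handle correctly — is that the triangle-inequality bound is non-strict, so the strictness of the conclusion must come from $\norm{\vq}_p>1$; since $d/(d-1)>1$ for $d\geq 3$, this is exactly where the hypothesis $d\geq 3$ enters, as you note.
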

\begin{proof}
%The set of standard unit basis vectors $S=\set{\ve_1,\dots,\ve_d}$ in $\R^d$ is $2^{1/p}$-equilateral in $\ell_p^d$.
%We show that $S$ can be extended, and if $S$ is extended in two ways $S\cup\set{\vp}$ and $S\cup\set{\vq}$, then the distance $\norm{\vp-\vq}_p>2^{1/p}$.
%Thus both $S\cup\set{\vp}$ and $S\cup\set{\vq}$ will be maximal equilateral.
%(In fact $S$ has exactly two extensions, but we don't need this for the proof.)

Let $\vp$ be equidistant to all points of $S$, say $\norm{\vp -\ve_i}_p=c$ for all $i\in[d]$ where $c>0$ is fixed.
Then $\abs{\vp^{(i)}}^p-\abs{\vp^{(i)}-1}^p=\norm{\vp}_p^p-c^p$ for all $i$.
By Lemma~\ref{lemma:calc}, $\vp^{(1)}=\dots=\vp^{(d)}$, i.e., $\vp$ is a multiple of $\vj:=(1,1,\dots,1)\in\R^d$.

Suppose now $\vp=x\vj$ satisfies $\norm{\vp-\ve_i}_p=2^{1/p}$ for all $i\in[d]$.
It follows that
\begin{equation}\label{eq0}
\abs{x-1}^p+(d-1)\abs{x}^p=2.
\end{equation}
Consider the function $f(x)=\abs{x-1}^p+(d-1)\abs{x}^p$.
It is easily checked that $f$ has a unique minimum at a point $x_0\in(0,1)$ and is strictly decreasing on $(-\infty,x_0)$ and strictly increasing on $(x_0,\infty)$.
Since $f(-1)=2^p+d-1>2$ and $f(0)=1$ and $f(1)=d-1\geq 2$, equation~\eqref{eq0} has a unique negative solution $x=-\mu\in(-1,0)$ and a unique positive solution $x=\lambda\in(0,1]$.
%It is clearly strictly decreasing on $(-\infty,0]$, and since $f(0)=1$ and $f(-1)>2$, equation \eqref{eq0} has a unique negative solution $-\mu$, say, in the interval $(-1,0)$.
%Let $\lambda$ be any other solution to \eqref{eq0}.
%Then $\lambda>0$ (there is in fact a unique positive solution to \eqref{eq0}, but we don't need to show this), and w
We have to show that $\norm{-\mu\vj-\lambda\vj}_p>2^{1/p}$, that is, $\lambda+\mu>(2/d)^{1/p}$.
Since $\lambda$ is a solution to \eqref{eq0}, it follows that $2=(1-\lambda)^p+(d-1)\lambda^p<1+d\lambda^p$, hence $\lambda>(1/d)^{1/p}$.
It remains to show that $\mu\geq(2^{1/p}-1)/d^{1/p}$.
Suppose to the contrary that
\begin{equation}\label{eq6}
 \mu<\frac{2^{1/p}-1}{d^{1/p}}.
 \end{equation}
Since $x=-\mu$ is a solution of \eqref{eq0},
\begin{align*}
2 &=(1+\mu)^p+(d-1)\mu^p\\
&< (1+\mu)^p-\mu^p+(2^{1/p}-1)^p\quad\text{by \eqref{eq6}},
\end{align*}
hence
\[ (2^{1/p}-1+1)^p-(2^{1/p}-1)^p < (\mu+1)^p-\mu^p. \]
By Lemma~\ref{lemma:calc}, $2^{1/p}-1<\mu$, which contradicts \eqref{eq6}.
\end{proof}
\begin{proof}[Proof of Theorem~\ref{thm:lpall}]
We have already observed in Section~\ref{section:introduction} that $m(X)=3$ for any two\dimensional~$X$, so we may assume that $d\geq 3$.
We have also observed in Section~\ref{section:petty} that $m(\ell_1^d)= 4$ for all $d\geq 3$, so we may assume that $p\in(1,\infty)$.
Then the theorem follows from Proposition~\ref{prop:lpcalc}.
\end{proof}
\begin{proposition}\label{prop:lpapprox}
Let $1<p<\infty$, $d\geq 3$, $0<\epsi\leq(2d-4)^{-1/(p-1)}$, and $R=(1+\frac{p-1}{2}\epsi)^{1/p}$.
Suppose that $X=(\R^d,\norm{\cdot})$ is given such that
\[ \norm{x}\leq\norm{x}_p\leq R\norm{x}\quad\text{for all $\vx\in\R^d$.}\]
Then $X$ has a $\lambda$-equilateral set $\set{\vp_1,\dots,\vp_d}$, where $\lambda=\left(2+(d-2)\epsi^p\right)^{1/p}$, such that $\vp_i^{(i)}=1$ for all $i\in[d]$, $-\epsi<\vp_i^{(j)}<0$ for all $i,j\in[d]$ with $j<i$, and $\vp_i^{(j)}=0$ for all $i,j\in[d]$ with~$j>i$.
\end{proposition}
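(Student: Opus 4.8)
The plan is to realise $\set{\vp_1,\dots,\vp_d}$ as a fixed point of a self-map of a cube, in the spirit of the proof of Proposition~\ref{prop:linftybrouwer}. Index the cube $I:=[0,\epsi]^{\binom{[d]}{2}}$ by unordered pairs, writing $\vz^{\set{i,j}}$ (for $i>j$) for the coordinate of $\vz\in I$ indexed by $\set{i,j}$. For $\vz\in I$ define points $\vp_1(\vz),\dots,\vp_d(\vz)\in\R^d$ by $\vp_i^{(i)}(\vz):=1$, $\vp_i^{(j)}(\vz):=0$ for $j>i$, and $\vp_i^{(j)}(\vz):=-\vz^{\set{i,j}}$ for $j<i$; these already have the prescribed triangular shape, with below-diagonal entries in $(-\epsi,0]$. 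A direct computation gives, for $i>j$,
\[ \norm{\vp_i(\vz)-\vp_j(\vz)}_p^p=(1+\vz^{\set{i,j}})^p+1+E_{ij}(\vz), \]
where $E_{ij}(\vz):=\sum_{k<j}\abs{\vz^{\set{j,k}}-\vz^{\set{i,k}}}^p+\sum_{j<k<i}(\vz^{\set{i,k}})^p$ is a sum of $i-2$ non-negative terms, each at most $\epsi^p$, so that $0\le E_{ij}(\vz)\le(d-2)\epsi^p=\lambda^p-2$.

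Next, define $\fhi\colon I\to\R^{\binom{[d]}{2}}$ by
\[ \fhi^{\set{i,j}}(\vz):=\Bigl((1+\vz^{\set{i,j}})^p+\lambda^p-\norm{\vp_i(\vz)-\vp_j(\vz)}^p\Bigr)^{1/p}-1\qquad(i>j), \]
where $\norm{\cdot}=\norm{\cdot}_X$. The crux is that $\fhi$ maps $I$ into itself. From $\norm{\cdot}\le\norm{\cdot}_p$ one gets $\norm{\vp_i(\vz)-\vp_j(\vz)}^p\le(1+\vz^{\set{i,j}})^p+1+E_{ij}(\vz)\le(1+\vz^{\set{i,j}})^p+\lambda^p-1$, so the argument of the $p$-th root is at least $1$ and $\fhi^{\set{i,j}}(\vz)\ge0$. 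From $\norm{\cdot}\ge\norm{\cdot}_p/R$ one gets $\norm{\vp_i(\vz)-\vp_j(\vz)}^p\ge\bigl((1+\vz^{\set{i,j}})^p+1\bigr)/R^p$; substituting this and using $\vz^{\set{i,j}}\le\epsi$ together with $1-R^{-p}>0$ (so the bound is worst at $\vz^{\set{i,j}}=\epsi$), a short computation reduces $\fhi^{\set{i,j}}(\vz)\le\epsi$ to the single numerical inequality
\[ R^p\lambda^p\le(1+\epsi)^p+1. \]
With $R^p=1+\tfrac{p-1}{2}\epsi$ and $\lambda^p=2+(d-2)\epsi^p$ this is an elementary estimate: expand the left-hand side, bound $(1+\epsi)^p\ge1+p\epsi$ (sharpened to $(1+\epsi)^p\ge1+p\epsi+\tfrac12p(p-1)\epsi^2$ when $p\ge2$), and use $(d-2)\epsi^{p-1}\le\tfrac12$, which is exactly the hypothesis $\epsi\le(2d-4)^{-1/(p-1)}$; one finds the inequality holds, and in fact strictly. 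Hence $\fhi(I)\subseteq I$, and since $\fhi$ is continuous, Brouwer's fixed-point theorem provides $\vz_0\in I$ with $\fhi(\vz_0)=\vz_0$.

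Taking $p$-th powers, each fixed-point equation $\fhi^{\set{i,j}}(\vz_0)=\vz_0^{\set{i,j}}$ becomes $\norm{\vp_i(\vz_0)-\vp_j(\vz_0)}^p=\lambda^p$, so with $\vp_i:=\vp_i(\vz_0)$ the set $\set{\vp_1,\dots,\vp_d}$ is $\lambda$-equilateral, and $\vp_i^{(i)}=1$, $\vp_i^{(j)}=0$ for $j>i$ hold by construction. It remains to check $-\epsi<\vp_i^{(j)}<0$ for $j<i$, i.e.\ $0<\vz_0^{\set{i,j}}<\epsi$. The upper bound holds because the estimate above is strict, so $\fhi^{\set{i,j}}(\vz)<\epsi$ for \emph{every} $\vz\in I$. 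For the lower bound, $\norm{\cdot}\le\norm{\cdot}_p$ gives $\fhi^{\set{i,j}}(\vz_0)\ge\bigl(\lambda^p-1-E_{ij}(\vz_0)\bigr)^{1/p}-1$, and $\lambda^p-1-E_{ij}(\vz_0)=1+(d-2)\epsi^p-E_{ij}(\vz_0)>1$ because $E_{ij}(\vz_0)<(d-2)\epsi^p$: when $i<d$ this is clear since $E_{ij}(\vz_0)\le(i-2)\epsi^p$, and when $i=d$ it follows because each of the $d-2\ge1$ terms of $E_{dj}(\vz_0)$ is strictly below $\epsi^p$ once we know that $\vz_0^{\set{k,l}}<\epsi$ for all pairs. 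Hence $\fhi^{\set{i,j}}(\vz_0)>0$, and the proof is complete.

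I expect the self-map verification --- specifically the reduction to $R^p\lambda^p\le(1+\epsi)^p+1$ and the elementary inequality behind it, which is the one point where the exact value of $R$ and the bound on $\epsi$ are used --- to be the only genuine obstacle; the $\ell_p$-distance formula, the passage from the fixed point to the equilateral set, and the two strict inequalities are routine.
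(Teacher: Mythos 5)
Your proof is correct and follows essentially the same route as the paper's: a triangular configuration perturbed via Brouwer's fixed-point theorem on a cube of pair-indexed parameters, with the self-map verification reducing to exactly the paper's key inequality $R^p\bigl(2+(d-2)\epsi^p\bigr)\leq 1+(1+\epsi)^p$ (inequality~\eqref{ten} there), which the paper checks by a second-derivative argument and you by Bernoulli/Taylor bounds split into the cases $1<p<2$ and $p\geq 2$. Your $p$-th-power form of $\fhi$ is a mild streamlining that works directly at the final scale and avoids the paper's two monotonicity checks, and you additionally verify the strict bounds $0<\vz_0^{\set{i,j}}<\epsi$, which the paper's proof leaves implicit.
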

\begin{proof}
Let $\beta,\gamma>0$ be arbitrary (to be fixed later).
For $i\in[d]$ define $\vp_i\colon\R^{\binom{[d]}{2}}\to\R^d$ by setting for each $n\in[d]$,
\[ \vp_i^{(n)}(\vz) = 
    \begin{cases}
        z^{\set{n,i}} & \text{if $n<i$,}\\
        -\gamma     & \text{if $n=i$,}\\
        0           & \text{if $n>i$.}
    \end{cases}
\]
That is,
\[ \vp_i(\vz) = (z^{\set{1,i}}, \dots, z^{\set{i-1,i}}, -\gamma, 0, \dots, 0). \]

Let $I=[0,\beta]^{\binom{[d]}{2}}$ and define $\fhi\colon I \to I$ by
\[ \fhi^{\set{i,j}}(\vz) = 1+z^{\set{i,j}} - \norm{\vp_i(\vz)-\vp_j(\vz)}\quad \text{for each $\set{i,j}\in\binom{[d]}{2}$.}\]
It is clear that $\fhi$ is continuous.
We next show that $\fhi$ is well defined if $R$, $\beta$, and $\gamma$ are chosen appropriately.
Let $\vz\in I$.
Then $0\leq z^{\set{i,j}}\leq\beta$ for all $\set{i,j}\in\binom{[d]}{2}$.
We first bound~$\norm{\vp_i(\vz)-\vp_j(\vz)}_p$.
Without loss of generality, $i<j$.
Then
\begin{align}
\norm{\vp_i(\vz)-\vp_j(\vz)}_p^p &= \sum_{k=1}^{i-1}\abs{z^{\set{k,i}}-z^{\set{k,j}}}^p + \abs{\gamma+z^{\set{i,j}}}^p \notag\\
& \qquad\qquad+ \sum_{k=i+1}^{j-1}\abs{z^{\set{k,j}}}^p + \gamma^p \notag \\
&\leq (i-1)\beta^p + (\gamma+z^{\set{i,j}})^p + (j-1-i)\beta^p+\gamma^p \notag \\
&= (j-2)\beta^p + \gamma^p + (\gamma + z^{\set{i,j}})^p \label{1}\\
\intertext{and}
\norm{\vp_i(\vz)-\vp_j(\vz)}_p^p &\geq \gamma^p + (\gamma + z^{\set{i,j}})^p\text{.} \label{2}
\end{align}

Thus
\[\fhi^{\set{i,j}}(\vz)\geq 1+z^{\set{i,j}} - \bigl((j-2)\beta^p + \gamma^p + (\gamma + z^{\set{i,j}})^p\bigr)^{1/p}\text{.}\]
Let $f(x)=1+x - \left((j-2)\beta^p + \gamma^p + (\gamma + x)^p\right)^{1/p}$ for $0\leq x\leq \beta$.
Then
\[%\begin{align*}
f'(x) %&= 1-\frac{1}{p}\left((j-2)\beta^p+\gamma^p+(\gamma+x)^p\right)^{\frac{1}{p}-1}p(\gamma+x)^{p-1}\\
= 1-\left(\frac{(\gamma+x)^p}{(j-2)\beta^p+\gamma^p+(\gamma+x)^p}\right)^{1-\frac{1}{p}} > 0\text{.}
%\\
%&> 1 - 1 = 0 \quad\text{since $\frac{1}{p}-1<0$.}
\]%\end{align*}
It follows that $f$ is strictly increasing, which gives that
\begin{align*}
\fhi^{\set{i,j}}(\vz) &\geq f\bigl(z^{\set{i,j}}\bigr)\geq f(0) = 1-\bigl((j-2)\beta^p+2\gamma^p\bigr)^{1/p}\\
&\geq 1- \bigl((d-2)\beta^p+2\gamma^p\bigr)^{1/p}\text{.}
\end{align*}
If we require that
\begin{equation}\label{3}
(d-2)\beta^p + 2\gamma^p = 1
\end{equation}
then $\fhi^{\set{i,j}}(\vz)\geq 0$ for all $\vz\in I$.

Also,
\begin{align*}
\fhi^{\set{i,j}}(\vz) &\leq 1+z^{\set{i,j}} - \frac{1}{R}\norm{\vp_i(\vz)-\vp_j(\vz)}_p\\
&\leq 1+z^{\set{i,j}} -\frac{1}{R}\left(\gamma^p+(\gamma+z^{\set{i,j}})^p\right)^{1/p}\text{.}
\end{align*}
Let $g(x)= 1+x -\frac{1}{R}\left(\gamma^p+(\gamma+x)^p\right)^{1/p}$ for $0\leq x \leq \beta$.
Then
\[%\begin{align*}
g'(x) %&= 1 - \frac{1}{R}\frac{1}{p}\bigl(\gamma^p+(\gamma+x)^p\bigr)^{\frac{1}{p}-1}p(\gamma+x)^{p-1}\\
= 1 - \frac{1}{R}\left(\frac{(\gamma+x)^p}{\gamma^p+(\gamma+x)^p}\right)^{1-\frac{1}{p}} > 1 - \frac{1}{R} > 0\text{.}
\]%\end{align*}
Therefore, $g$ is strictly increasing, which gives that
\[ \fhi^{\set{i,j}}(\vz) \leq g\bigl(z^{\set{i,j}}\bigr)\leq g(\beta) = 1 + \beta -\frac{1}{R}\bigl(\gamma^p+(\gamma+\beta)^p\bigr)^{1/p}\text{.} \]
In order to conclude that $\fhi^{\set{i,j}}(\vz)\leq\beta$, it is sufficient to require that
\begin{equation}\label{4}
\gamma^p+(\gamma+\beta)^p \geq R^p.
\end{equation}

Suppose for the moment that we can find $\beta,\gamma>0$ such that \eqref{3} and \eqref{4} are satisfied.
Then $\fhi\colon I\to I$ is well defined, and by Brouwer's fixed point theorem $\fhi$ has a fixed point, that is, for some $\vz_0\in I$ we have $\fhi(\vz_0)=\vz_0$, which implies that $\setbuilder{\vp_i(\vz_0)}{i\in[d]}$ is $1$-equilateral.
%Write $\vp_i:=\vp_i(\vz_0)$.
Since $\vp_i^{(i)}(\vz_0)=-\gamma$, we have to scale each $\vp_i(\vz_0)$ by $-\gamma$.
Set $\gamma=1/\lambda$ and~$\beta=\gamma\epsi$.
Then $\setbuilder{-(1/\gamma)\vp_i(\vz_0)}{i\in[d]}$ is $\lambda$-equilateral,
the requirement \eqref{3} becomes the definition of $\lambda$, namely
\[ (d-2)\epsi^p + 2 = \lambda^p\]
and the requirement \eqref{4} becomes
\begin{equation}\label{ten}
\frac{1+(1+\epsi)^p}{2+(d-2)\epsi^p} \geq R^p\text{.}
\end{equation}
It remains to verify \eqref{ten} given that $0<\epsi\leq (2(d-2))^{-1/(p-1)}$ and $R^p = 1+\frac{p-1}{2}\epsi$.
Since \[(d-2)\epsi^p=(d-2)\epsi^{p-1}\epsi\leq\epsi/2\] and $\epsi\leq 2^{-1/(p-1)}<1$, it is sufficient to show that
\[ f(\epsi) := 1+(1+\epsi)^p-\left(1+\frac{p-1}{2}\epsi\right)\left(2+\frac{\epsi}{2}\right) \geq 0\]
for all $\epsi\in[0,1]$.
A calculation gives that $f(0)=0$, $f'(0)=1/2$, $f''(\epsi)=p(p-1)(1+\epsi)^{p-2}-(p-1)/2$, $f''(0)=(p-\frac{1}{2})(p-1)>0$ and $f''(1)=\frac{p-1}{2}(p2^{p-1}-1)>0$.
Thus $f''$ is monotone and positive at the endpoints of $[0,1]$, hence positive on the whole $[0,1]$, and it follows that $f'$ is positive on $[0,1]$ and $f(\epsi)\geq 0$ for all $\epsi\in[0,1]$.
%First consider the case $p\geq 2$.
%Then $(1+\epsi)^p > 1+p\epsi + \frac{p(p-1)}{2}\epsi^2$, and it is thus sufficient to show that
%\begin{equation}\label{ineqp2}
%\frac{2+p\epsi + \frac{p(p-1)}{2}\epsi^2}{2+(d-2)\epsi^p} \geq 1+\frac{p-1}{2}\epsi\text{.}
%\end{equation}
%However,
%\begin{align*}
%&\quad 2+p\epsi + \frac{p(p-1)}{2}\epsi^2 - \left(2+(d-2)\epsi^p\right)\left(1+\frac{p-1}{2}\epsi\right)\\
%&= \left(1-(d-2)\epsi^{p-1}\right)\left(\epsi + \frac{1}{2}(p-1)\epsi^2\right)+\frac{1}{2}(p-1)^2\epsi^2\\
%&> 0\quad\text{since $1-(d-2)\epsi^{p-1}>0$,}
%\end{align*}
%and \eqref{ineqp2} follows.
%
%Next consider the remaining case $1< p < 2$.
%Then we only have the bound $(1+\epsi)^p\geq 1+p\epsi$.
%On the other hand, since $d\geq 3$ and $p<2$, it follows that $\epsi<1/2$.
%It is now sufficient to show that
%\begin{equation}\label{ineqp1}
%\frac{2+p\epsi}{2+(d-2)\epsi^p} \geq 1+\frac{p-1}{2}\epsi\text{.}
%\end{equation}
%A calculation similar to the previous one gives
%\begin{align*}
%&\quad 2+p\epsi - \left(2+(d-2)\epsi^p\right)\left(1+\frac{p-1}{2}\epsi\right)\\
%&= \left(1-(d-2)\epsi^{p-1}\right)\left(\epsi + \frac{1}{2}(p-1)\epsi^2\right)-\frac{1}{2}(p-1)\epsi^2\\
%&\geq \frac{1}{2}\left(\epsi+\frac{1}{2}(p-1)\epsi^2\right)-\frac{1}{2}(p-1)\epsi^2\\
%&= \epsi\left(\frac{1}{2}-\frac{1}{4}(p-1)\epsi\right)
%> \epsi\left(\frac{1}{2}-\frac{1}{4}\epsi\right) > 0\quad\text{since $\epsi<2$,}
%\end{align*}
%and \eqref{ineqp1} follows.
\end{proof}
\begin{proof}[Proof of Theorem~\ref{thm:approxlp}]
Suppose that the theorem is false.
Then for some fixed $p\in(1,\infty)$ and $d\geq 3$ and for all $c>1$, there exists a $d$\dimensional~$X$ such that $\BMdist{X}{\ell_p^d}< c$ and $m(X)\geq d+2$.
Choose a sequence $X_n=(\R^d,\norm{\cdot}_{(n)})$ such that $m(X_n)\geq d+2$ and
\[ \norm{\vx}_{(n)}\leq\norm{x}_p\leq \left(1+\frac{1}{n}\right)^{1/p}\norm{\vx}_{(n)}\quad\text{for all $\vx\in\R^d$.}\]
If $n$ is sufficiently large, in particular if 
\[n>\frac{2(2d-4)^{1/(p-1)}}{p-1}\text{,}\]
and if we choose $\epsi=2/(n(p-1))$, then $1/n=(p-1)\epsi/2$ and $\epsi<(d-2)^{-1/(p-1)}$, and we may apply Proposition~\ref{prop:lpapprox} to obtain an equilateral set $\setbuilder{\vp_i{(n)}}{i\in[d]}$ in $X_n$ such that $\vp_i^{(i)}(n)=1$ for all $i\in[d]$ and $-\epsi<\vp_i^{(j)}(n)\leq 0$ for all $i,j\in[d]$, $i\neq j$.
Since $m(X_n)\geq d+2$, there exist points $\vp_{d+1}(n), \vp_{d+2}(n)\in X_n$ such that $\setbuilder{\vp_i{(n)}}{i\in[d+2]}$ is equilateral.
By passing to a subsequence we may assume without loss of generality that $\vp_{d+1}(n)\to\vp$ and $\vp_{d+2}(n)\to\vq$ as~$n\to\infty$.
Since $\vp_i(n)\to\ve_i$ and $\BMdist{X_n}{\ell_p^d}\to 1$ as $n\to\infty$, it follows that $\set{\ve_{1},\dots,\ve_{d},\vp,\vq}$ is equilateral in $\ell_p^d$.
This contradicts Proposition~\ref{prop:lpcalc}.
\end{proof}

\section{A construction with Hadamard matrices}\label{section:hadamard}
In \cite{Swanepoel-AdM} Hadamard matrices were used to construct equilateral sets in $\ell_p^d$ of cardinality greater than $d+1$, for all $p\in(1,2)$ and sufficiently large $d$ depending on $p$.
The construction used here is a more involved version of this idea.
Before introducing the properties of Hadamard matrices that will be needed, we first consider a special case to illustrate the construction.
\begin{lemma}\label{lemma:lpplane}
Let $1\leq p\leq 2$.
For each $\lambda\in [2^{1-1/p}, 2^{1/p}]$ there exist linearly independent unit vectors $\vu, \vv\in\ell_p^2$ such that $\norm{\vu+\vv}_p=\norm{\vu-\vv}_p=\lambda$.
\end{lemma}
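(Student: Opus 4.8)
The plan is to find explicit vectors in $\ell_p^2$ parametrised by a single parameter and then use a continuity/intermediate value argument to hit the prescribed value of $\lambda$. The natural candidates, given the symmetry $\norm{\vu+\vv}_p = \norm{\vu-\vv}_p$, are vectors of the form $\vu = (a,b)$ and $\vv = (b,a)$ (so that $\vu - \vv = (a-b)(1,-1)$ and $\vu + \vv = (a+b)(1,1)$), or — more symmetrically — $\vu = (\cos t, \sin t)$-style pairs. Let me instead take $\vu = (s, t)$ and $\vv = (s, -t)$ with $s, t \geq 0$; then $\vu + \vv = (2s, 0)$, $\vu - \vv = (0, 2t)$, so $\norm{\vu+\vv}_p = 2s$ and $\norm{\vu-\vv}_p = 2t$, and we need $s = t$ together with $\norm{\vu}_p^p = s^p + t^p = 2s^p = 1$. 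That forces $s = t = 2^{-1/p}$ and $\lambda = 2^{1-1/p}$, giving only one endpoint. So this choice is too rigid; I need a genuinely one-parameter family.

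The fix is to take $\vu = (x, y)$ and $\vv = (y, x)$ with $x, y \geq 0$ and $x^p + y^p = 1$. Then $\vu + \vv = (x+y)(1,1)$ and $\vu - \vv = (x - y)(1,-1)$, so $\norm{\vu + \vv}_p = (x+y) \cdot 2^{1/p}$ and $\norm{\vu - \vv}_p = \abs{x - y} \cdot 2^{1/p}$. These are equal only when $x = 0$ or $y = 0$, again too rigid. Evidently the two conditions $\norm{\vu+\vv}_p = \norm{\vu-\vv}_p$ and $\norm{\vu}_p = \norm{\vv}_p = 1$ are three scalar constraints on the four coordinates of $(\vu,\vv)$, leaving a one-parameter family in general, so I should parametrise the solution set directly rather than guess a symmetric form. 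Write $\vu = (a, b)$, $\vv = (c, e)$; the equation $\abs{a+c}^p + \abs{b+e}^p = \abs{a-c}^p + \abs{b-e}^p$ together with $\abs{a}^p + \abs{b}^p = \abs{c}^p + \abs{e}^p = 1$ cuts out a curve, and I would parametrise it by, say, the angle of $\vu$ on the unit $\ell_p$-sphere, then show the resulting $\lambda = \norm{\vu+\vv}_p$ is a continuous function of that parameter whose range contains $[2^{1-1/p}, 2^{1/p}]$.

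Concretely, here is a cleaner route. Fix the unit $\ell_p$-sphere $S$ in $\R^2$ and for $\vu \in S$ consider the reflection $\vv = T\vu$ where $T(x_1, x_2) = (x_2, x_1)$; the identity above shows this forces $\lambda \in \{2^{1/p}\abs{x-y}\}$, attaining $2^{1/p}$ at $\vu = \ve_1$. For the other endpoint $2^{1-1/p}$, use $\vv = (s,-t)$ as computed. To interpolate between these two discrete constructions, introduce a continuous deformation: for $\theta \in [0, \pi/2]$ let $\vu(\theta)$ trace the first quadrant of $S$ from $\ve_1$ to $\ve_2$, and for each $\theta$ choose $\vv(\theta)$ to be the unique point of $S$ with $\norm{\vu(\theta) - \vv(\theta)}_p = \norm{\vu(\theta) + \vv(\theta)}_p$ lying in an appropriate arc — existence and uniqueness of $\vv(\theta)$ should follow from monotonicity of $r \mapsto \norm{\vu - r(\cos\psi, \sin\psi)}_p$-type functions along $S$, using strict convexity of the $\ell_p$ ball for $p > 1$ and a direct check for $p = 1$. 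Then $\lambda(\theta) := \norm{\vu(\theta) + \vv(\theta)}_p$ is continuous in $\theta$ by the implicit choice, and I would compute its values at the two ends to be $2^{1/p}$ and $2^{1-1/p}$ (in some order); the intermediate value theorem then delivers every $\lambda \in [2^{1-1/p}, 2^{1/p}]$.

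The main obstacle is the middle step: setting up the right one-parameter family so that the map to $\lambda$ is manifestly continuous and its endpoint values are exactly $2^{1-1/p}$ and $2^{1/p}$. Guessing a clean closed-form parametrisation (rather than an implicitly-defined one) would make the continuity and the endpoint computations painless, so I would spend effort looking for one — e.g. trying $\vu = (\alpha, \beta)$, $\vv = (\beta, \alpha)$ but relaxing the constraint that both have $\ell_p$-norm $1$ and instead rescaling, or using vectors of the form $\vu = (\cos_p t, \sin_p t)$ in terms of the $p$-trigonometric functions. If no closed form presents itself, the implicit-function/continuity argument above still works, but then one must argue carefully that the selection $\theta \mapsto \vv(\theta)$ can be made continuous, which is where strict convexity of the unit ball (for $1 < p < 2$) enters; the edge case $p = 1$ (unit ball a square) must be handled by an explicit description since uniqueness can fail. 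Finally, symmetry considerations — the problem is invariant under $\vu \leftrightarrow \vv$ and under sign changes of coordinates — should be exploited to reduce to vectors in the closed first quadrant, simplifying all of the above.
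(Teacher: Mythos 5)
Your plan correctly identifies that you need a one-parameter family of pairs $(\vu,\vv)$ on the unit sphere with $\norm{\vu+\vv}_p=\norm{\vu-\vv}_p$ built in, followed by an intermediate value argument; but the proposal stops exactly at the step you yourself flag as the main obstacle. The implicitly defined selection $\theta\mapsto\vv(\theta)$ is never shown to exist, to be single-valued on a branch, or to be continuous (and your fallback for $p=1$ is left entirely open), and the one concrete claim you do make about it is wrong: if $\vu(\theta)$ traces the first-quadrant arc from $\ve_1$ to $\ve_2$ and $\vv(\theta)$ is the associated equidistant point, then $\lambda$ equals $2^{1/p}$ at \emph{both} endpoints, and the value $2^{1-1/p}$ is attained at the midpoint $\vu=2^{-1/p}(1,1)$, $\vv=2^{-1/p}(-1,1)$. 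So "compute the values at the two ends and apply IVT'' does not work as stated; one would have to apply IVT on a half-arc instead. As written, the proposal is a programme with the decisive step (and its verification) missing, not a proof.

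The closed-form parametrisation you were hunting for is the obvious one you did not try: combine your two rigid attempts by swapping the coordinates \emph{and} flipping one sign. Take $\vu=(\alpha,\beta)$ and $\vv=(-\beta,\alpha)$ with $\alpha,\beta\geq 0$ and $\alpha^p+\beta^p=1$. Then $\vu+\vv=(\alpha-\beta,\alpha+\beta)$ and $\vu-\vv=(\alpha+\beta,\beta-\alpha)$ have the same multiset of coordinate absolute values, so $\norm{\vu+\vv}_p^p=\norm{\vu-\vv}_p^p=\abs{\alpha+\beta}^p+\abs{\alpha-\beta}^p$ identically, with no equidistance condition to solve. This quantity is a continuous function of the arc parameter, equal to $2$ at $(\alpha,\beta)=(0,1)$ and to $2^{p-1}$ at $\alpha=\beta=2^{-1/p}$, and since $2^{p-1}\leq 2$ for $p\leq 2$ the intermediate value theorem gives every $\lambda\in[2^{1-1/p},2^{1/p}]$. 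This is the paper's entire proof; it works uniformly for $p=1$ and needs no strict convexity, no implicit function argument, and no case analysis.
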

\begin{proof}
Let $\vu=(\alpha,\beta)$ and $\vv=(-\beta,\alpha)$ where $\alpha,\beta\geq 0$ and $\alpha^p+\beta^p=1$.
Then $\norm{\vu\pm\vv}_p^p=\abs{\alpha+\beta}^p+\abs{\alpha-\beta}^p$, which ranges from $2$ when $\alpha=0$ and $\beta=1$, to $2^{p-1}$ when $\alpha=\beta=2^{-1/p}$.
\end{proof}

\begin{lemma}[Monotonicity lemma]\label{lemma:monotonicity}
Let $\vu$ and $\vv$ be linearly independent unit vectors in a strictly convex $2$\dimensional normed space.
Let $\vp\neq\vo$ be any point such that $\vu$ is between~$\frac{1}{\norm{\vp}}\vp$ and $\vv$ on the boundary of the unit ball.
Then $\norm{\vp-\vu}<\norm{\vp-\vv}$.
\end{lemma}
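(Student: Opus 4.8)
\emph{Overview and first step.} The plan is to reduce the statement to two facts about strictly convex planes: that $\vp$ has a \emph{unique} nearest point on the unit sphere, and that one moves away from that nearest point as one travels along the sphere towards $\vv$. Write $K:=B_X(\vo,1)$ for the unit ball, $\bd K$ for the unit sphere, and $\vp_0:=\frac{1}{\norm{\vp}}\vp\in\bd K$. For any unit vector $\vw$ we have $\bigl\lvert\norm{\vp}-1\bigr\rvert\le\norm{\vp-\vw}\le\norm{\vp}+1$, and by strict convexity equality on the left forces $\vw=\vp_0$ and on the right forces $\vw=-\vp_0$. Hence $\vp_0$ is the unique point of $\bd K$ nearest $\vp$ and $-\vp_0$ the unique point farthest from $\vp$. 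Linear independence of $\vu,\vv$ excludes $\vv=\vp_0$ (which would force $\vu=\vv$), and the case $\vv=-\vp_0$ is immediate since then $\vu\ne-\vp_0$; so I may assume $\vv\ne\pm\vp_0$.

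\emph{The critical ball.} Put $\rho:=\norm{\vp-\vv}$ and $D:=B_X(\vp,\rho)$, so $\vv\in\bd D$. By the previous step $\norm{\vp-\vp_0}<\rho<\norm{\vp}+1$, hence $\vp_0\in\interior D$ and $-\vp_0\notin D$. Now $\bd D=\vp+\rho\,\bd K$ is a positive homothet of $\bd K$, and I invoke the fact (discussed below) that the boundaries of two distinct positive homothets of a strictly convex planar body meet in at most two points. Since $\bd K$ runs from $\interior D$ (at $\vp_0$) to the complement of $D$ (at $-\vp_0$) it meets $\bd D$ in at least two, hence exactly two, points; one of these is $\vv$, call the other $\vr$. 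Consequently $\bd K\cap D$ is a single closed sub-arc $\Delta$ of $\bd K$ with endpoints $\vv,\vr$, having $\vp_0$ in its relative interior, and $\Delta\setminus\set{\vv,\vr}=\bd K\cap\interior D$.

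\emph{Conclusion.} The two sub-arcs of $\bd K$ joining $\vp_0$ to $\vv$ are the piece of $\Delta$ between them (containing neither $\vr$ nor $-\vp_0$) and its complement (containing both $\vr$ and $-\vp_0$). The hypothesis that $\vu$ is between $\vp_0$ and $\vv$ on $\bd K$ must refer to the first of these: it is the only one for which the asserted inequality can hold, since on the other arc lies the farthest point $-\vp_0$. Thus $\vu$ lies on the sub-arc of $\Delta$ from its relative-interior point $\vp_0$ to its endpoint $\vv$, so $\vu\in\Delta\setminus\set{\vv,\vr}=\bd K\cap\interior D$ (here $\vu\ne\vv$ by linear independence and $\vu\ne\vr$ since $\vr$ lies on the other side of $\vp_0$ in $\Delta$). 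Therefore $\norm{\vp-\vu}<\rho=\norm{\vp-\vv}$, as required.

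\emph{Main obstacle.} The crux is the intersection bound used in the second step, equivalently the statement that a triangle in a strictly convex normed plane has a unique circumcentre. I would either cite this (it is standard in Minkowski geometry) or prove it directly: if $\bd K$ and $\bd D$ shared three points $\va,\vb,\vc$, then $\vo$ and $\vp$ would each be equidistant from $\va,\vb,\vc$, hence both would lie on every pairwise bisector $\setbuilder{\vx}{\norm{\vx-\va}=\norm{\vx-\vb}}$; one then checks, using strict convexity, that two such bisectors sharing a vertex meet in a single point, forcing $\vo=\vp$ and $D=K$, a contradiction. A secondary, purely bookkeeping point is identifying which of the two arcs ``between'' designates, which the third step settles; everything else is elementary.
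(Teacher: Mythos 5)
The paper offers no proof of this lemma at all: it is imported from the literature with the pointer ``For a proof, see \cite[Proposition~31]{Martini2001}'', so there is no in-paper argument to compare yours against. Your skeleton is sound and most of it is genuinely complete: the identification of $\vp_0=\frac{1}{\norm{\vp}}\vp$ and $-\vp_0$ as the unique nearest and farthest unit vectors from $\vp$ via the equality cases of the triangle inequality in a strictly convex space is correct; granted that $\bd K$ meets $\bd D$ in exactly two points, the deduction that $\bd K\cap D$ is a single closed arc $\Delta$ with $\vp_0$ in its relative interior, and that the arc from $\vp_0$ to $\vv$ avoiding $-\vp_0$ lies in $\Delta\setminus\set{\vv,\vr}\subseteq\interior D$, is correct; and your reading of ``between'' (the arc not containing $-\vp_0$) is the one consistent with how the lemma is invoked in Lemma~\ref{lemma:scplane}, where $\vx=\alpha\vu+\beta\vv$ with $\alpha,\beta\geq 0$.

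The genuine gap is exactly where you flag it, and your proposed repair does not close it. The statements ``two distinct circles of a strictly convex plane meet in at most two points'' and ``for distinct $\va,\vb,\vc$ the bisectors $\setbuilder{\vx}{\norm{\vx-\va}=\norm{\vx-\vb}}$ and $\setbuilder{\vx}{\norm{\vx-\va}=\norm{\vx-\vc}}$ meet in at most one point'' are equivalent: three common points of two distinct circles yield two distinct points equidistant from a triangle's vertices, and conversely two such points yield two distinct circles through three common points. So deriving the first from the second, with the second dispatched by ``one then checks, using strict convexity,'' is circular --- all of the content of the lemma is hidden inside that check, and in the standard development this intersection property is usually \emph{deduced from} the monotonicity lemma rather than used to prove it. If you want a self-contained argument, the translate case ($\rho=1$) can be done honestly: a point of $\bd K\cap\bd(K+\vp)$ is the endpoint of a chord of $K$ that is a translate of $[\vo,\vp]$, no line parallel to $\vp$ carries two such points (that would give three collinear boundary points), and the chord-length function of $K$ in the direction $\vp$ is concave and cannot be constant on a nondegenerate interval without forcing segments in $\bd K$; hence at most two such chords exist. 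But the case $\rho\neq 1$ needs a separate argument (the homothety-centre trick only shows each line through the centre carries at most one common point), and until one is supplied the proof is incomplete. The alternative of citing the two-circle intersection bound is legitimate, but at that point one may as well cite the monotonicity lemma itself, which is what the paper does.
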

For a proof, see \cite[Proposition~31]{Martini2001}.
For non-strictly convex norms the above lemma still holds with a non-strict inequality.
On the other hand, the following corollary of the monotonicity lemma is false when the norm is not strictly convex, as easy examples show.
\begin{lemma}\label{lemma:scplane}
Let $\vu$ and $\vv$ be linearly independent unit vectors in a strictly convex $2$\dimensional normed space.
Suppose that $\vx$ is such that $\norm{\vx-\vu}=\norm{\vx+\vu}$ and $\norm{\vx-\vv}=\norm{\vx+\vv}$.
Then~$\vx=\vo$.
\end{lemma}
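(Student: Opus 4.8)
The plan is to reduce the claim to the monotonicity lemma by a clean geometric argument. Suppose $\vx\neq\vo$. Consider the line segment from $-\vu$ to $\vu$; the condition $\norm{\vx-\vu}=\norm{\vx+\vu}$ says that $\vx$ is ``equidistant'' from the endpoints of this segment. The strategy is to show that such an $\vx$ must lie on the line through $\vo$ in the direction of the ``angle bisector'' and then derive a contradiction using both equidistance conditions simultaneously.

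First I would set up coordinates: since $\vu,\vv$ are linearly independent, every nonzero point can be written as $\vp=t\vw$ where $\norm{\vw}=1$ and $t=\norm{\vp}>0$, with $\vw$ on the unit circle. The key observation is that the function $\vw\mapsto\norm{t\vw-\vu}-\norm{t\vw+\vu}$, as $\vw$ traverses the unit circle, changes sign exactly once in each of the two arcs determined by $\pm\vu$ (this is where strict convexity and the monotonicity lemma enter): by Lemma~\ref{lemma:monotonicity}, as $\vw$ moves along the boundary of the unit ball away from $\frac{1}{t}\vx$ toward $\vu$, the distance $\norm{\vx-\vw\cdot\text{(stuff)}}$ is strictly monotone. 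More precisely, I would argue that the set of directions $\vw$ with $\norm{\vx-\vu}=\norm{\vx+\vu}$, scaled, forms (at most) a single line through the origin: if $\vx$ and $\vx'$ are two nonzero points with $\norm{\vx-\vu}=\norm{\vx+\vu}$ and $\norm{\vx'-\vu}=\norm{\vx'+\vu}$ but $\vx'$ is not a positive multiple of $\vx$ or of $-\vx$, apply the monotonicity lemma to get a strict inequality contradicting one of the equalities. Thus the locus $\{\vx\neq\vo:\norm{\vx-\vu}=\norm{\vx+\vu}\}$ lies on a single line $\ell_{\vu}$ through $\vo$; similarly the locus for $\vv$ lies on a line $\ell_{\vv}$ through $\vo$.

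Next, I would show $\ell_{\vu}\neq\ell_{\vv}$. If they coincided, then this common line $\ell$ would contain a direction $\vw$ with both $\norm{\vw-\vu}=\norm{\vw+\vu}$ and $\norm{\vw-\vv}=\norm{\vw+\vv}$; but I can instead pin down $\ell_{\vu}$ concretely. Observe that $\vu+\vv$ and $\vu-\vv$ are natural candidates: $\norm{(\vu+\vv)-\vu}=\norm{\vv}=1=\norm{\vu}=\norm{(\vu+\vv)+\vu-2\vu}$... actually the cleanest route is to note that $\vo$ itself satisfies both equalities trivially, and any other point on $\ell_{\vu}$ is a multiple of a fixed direction; by symmetry (replacing $\vx$ by $-\vx$) the line $\ell_{\vu}$ is genuinely a line. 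The point is that $\ell_\vu$ is the perpendicular-bisector-like locus associated to $\vu$ and $\ell_\vv$ the one associated to $\vv$, and these must be distinct lines because $\vu\not\parallel\vv$ — if they were equal, a single direction would bisect ``both ways,'' forcing $\vu\parallel\vv$ via the monotonicity lemma. Since two distinct lines through $\vo$ meet only at $\vo$, and $\vx\in\ell_{\vu}\cap\ell_{\vv}$, we conclude $\vx=\vo$, contradicting our assumption.

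I expect the main obstacle to be making the ``the locus is a single line'' step fully rigorous, i.e., correctly invoking Lemma~\ref{lemma:monotonicity} with the right choice of which point plays the role of $\vp$ and which unit vectors bracket $\frac{1}{\norm{\vp}}\vp$ on the unit circle. One must be careful about the cyclic order of $\vu$, $\vv$, $-\vu$, $-\vv$, and the scaled point $\frac{1}{\norm{\vx}}\vx$ on the boundary of the unit ball, and handle the possibility that $\frac{1}{\norm{\vx}}\vx$ equals $\pm\vu$ or $\pm\vv$ separately (in those degenerate cases the equalities force $\norm{\vu\mp\vv}=\norm{\vu\pm\vv}$... leading again to a contradiction or directly to $\vx=\vo$). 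The remark in the excerpt that the corollary fails without strict convexity confirms that strict convexity must be used essentially exactly at this monotonicity step — in the non-strictly-convex case the ``locus'' can be a two-dimensional region rather than a line, which is precisely what breaks the argument.
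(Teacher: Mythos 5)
Your overall instinct---that Lemma~\ref{lemma:monotonicity} is the engine and that strict convexity enters exactly there---matches the paper, but the route you propose rests on a false structural claim: the locus $\setbuilder{\vx\neq\vo}{\norm{\vx-\vu}=\norm{\vx+\vu}}$ is in general \emph{not} contained in a line through the origin, even in a strictly convex plane. For example, in $\ell_4^2$ with $\vu$ the unit multiple of $(1,2)$, the bisector of $\pm\vu$ is (up to the scaling factor) the cubic curve $x^3+x+2y^3+8y=0$, which contains the points $(2,-1)$ and $(1,y_1)$ with $y_1^3+4y_1+1=0$, $y_1\approx-0.246$; these are not collinear with $\vo$. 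So the concluding step ``two distinct lines through $\vo$ meet only at $\vo$'' has nothing to apply to, and your proposed sub-argument for linearity---taking two non-collinear points $\vx,\vx'$ on the bisector and deriving a contradiction from the monotonicity lemma---cannot be repaired: that lemma compares the distances from a \emph{single} point to two unit vectors, and there is no contradiction to extract because such non-collinear pairs genuinely exist. (Your own caveat that this is ``the main obstacle'' is thus well placed: the step is not merely hard to make rigorous, it is false.)

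The paper's proof avoids the detour entirely. Replacing $\vu$ by $-\vu$ and/or $\vv$ by $-\vv$ (which leaves both hypotheses unchanged), one may assume $\vx=\alpha\vu+\beta\vv$ with $\alpha,\beta\geq0$. If $\vx\neq\vo$, then $\frac{1}{\norm{\vx}}\vx$ lies on the arc of the unit circle between $\vu$ and $\vv$, so $\vv$ is between $\frac{1}{\norm{\vx}}\vx$ and $-\vu$, and $\vu$ is between $\frac{1}{\norm{\vx}}\vx$ and $-\vv$; two applications of Lemma~\ref{lemma:monotonicity} give $\norm{\vx-\vv}<\norm{\vx+\vu}=\norm{\vx-\vu}<\norm{\vx+\vv}$, contradicting $\norm{\vx-\vv}=\norm{\vx+\vv}$. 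The only fact needed is that the two bisectors meet exactly in $\vo$, and this chain of inequalities delivers it in one stroke without any claim about the global shape of either bisector. I would recommend rewriting your argument along these lines.
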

\begin{proof}
Without loss of generality, $\vx=\alpha\vu+\beta\vv$ with $\alpha,\beta\geq 0$.
If $\vx\neq\vo$, then by Lemma~\ref{lemma:monotonicity},
\[\norm{\vx-\vv}<\norm{\vx+\vu}=\norm{\vx-\vu}<\norm{\vx+\vv},\]
a contradiction.
\end{proof}

\begin{proposition}\label{prop17}
Let $X$ be any normed space, $d\geq 4$, $q\in[1,\infty)$, and $1\leq p < \frac{\log (5/2)}{\log 2}$.
Then $m(\ell_p^d\oplus_q X)\leq 5$.
If $p =\frac{\log (5/2)}{\log 2}$, then $m(\ell_p^d\oplus_q X)\leq 6$.
\end{proposition}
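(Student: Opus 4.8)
The plan is to construct an explicit equilateral set in $\ell_p^d \oplus_q X$ of the appropriate small size (five points when $p < \log_2(5/2)$, six when $p = \log_2(5/2)$) that is forced to be maximal because no further point can be equidistant from all of them. The key idea, following the method announced before Lemma~\ref{lemma:lpplane}, is to place most of the points inside a two-dimensional coordinate subspace of $\ell_p^d$ where we have full control via Lemma~\ref{lemma:lpplane} and the monotonicity machinery (Lemmas~\ref{lemma:monotonicity} and~\ref{lemma:scplane}), and to use the remaining coordinates of $\ell_p^d$ to ``block'' any potential extending point. First I would fix $\lambda$ in the allowable range $[2^{1-1/p}, 2^{1/p}]$ and, using Lemma~\ref{lemma:lpplane}, choose unit vectors $\vu, \vv$ in a $2$-dimensional $\ell_p$-plane with $\norm{\vu+\vv}_p = \norm{\vu-\vv}_p = \lambda$; the four points $\pm\vu, \pm\vv$ (embedded in the first two coordinates of $\ell_p^d$, with zero in $X$) then form the core of a $\lambda$-equilateral configuration once we check all pairwise distances, which by construction are all $\lambda$ except possibly $\norm{\vu - (-\vu)}_p = 2$ and $\norm{\vv - (-\vv)}_p = 2$ — so actually one takes a more careful configuration, e.g. $\vu, -\vu, \vv$ together with extra points built from further coordinates so that the ``diameter'' pairs also have length $\lambda$. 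The count $5 = 4+1$ (resp.\ $6$) reflects how many coordinates/extra vectors are needed so that $2^{k}$-type combinatorics (the $\log 2$ in the denominators of the $p$-ranges) forces the value of $\lambda$ to lie in the feasible band.

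Concretely, I expect the construction to use a small Hadamard-type matrix (size $2$ or $4$): one takes rows $\vh_1, \dots, \vh_k$ of a Hadamard matrix, scales appropriately, and forms points whose coordinates in blocks of $\ell_p^d$ are these rows; the $\pm 1$ orthogonality of Hadamard rows makes all the pairwise $\ell_p$-distances among these points equal. The threshold $p = \log_2(5/2)$ arises precisely as the value where a length computation of the form $(\text{something})^{1/p}$ equals the boundary $2^{1/p}$ or $2^{1-1/p}$ of the interval in Lemma~\ref{lemma:lpplane}; below it one needs $5$ points, at equality one needs a sixth point to repair the now-degenerate case. After exhibiting the equilateral set, the second half of the proof shows maximality: suppose $\vy = (\vy', \vx)$ with $\vy' \in \ell_p^d$, $\vx \in X$, is equidistant (distance $\lambda$) from all the constructed points. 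The $\oplus_q$ structure forces $\norm{(\norm{\vy' - \vp_i'}_p, \norm{\vx}_X)}_q = \lambda$ for every $i$, and since the $\ell_p^d$-parts of our points are chosen so that the map $\vp' \mapsto (\norm{\vp' - \vp_i'}_p)_i$ is ``rigid'' — using strict convexity of $\ell_p$ for $1 < p < 2$ and Lemma~\ref{lemma:scplane} in the relevant $2$-planes, plus an explicit argument in the non-strictly-convex case $p=1$ handled separately as in Proposition~\ref{pettygen} — one deduces first that $\vx = \vo$ and then that $\vy'$ would have to coincide with one of the existing points or violate one of the distance equations, a contradiction.

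The main obstacle, I expect, is the maximality step: ruling out \emph{all} possible extending points $\vy$ requires a careful case analysis on where $\vy'$ sits relative to the coordinate blocks. Within a single $2$-dimensional block, Lemma~\ref{lemma:scplane} gives rigidity only under strict convexity, so the case $p=1$ needs a separate combinatorial argument about which coordinates of $\vy'$ must equal $\pm 1$ (similar in spirit to the $\ell_\infty$ argument in Proposition~\ref{prop:linftybrouwer} but adapted to $\ell_1$). Across blocks, one must combine the per-block constraints: the sum of the $p$-th powers of the block-contributions is pinned down, and showing this system has no solution other than the trivial one is where the inequality $1 \le p < \log_2(5/2)$ gets used in full force. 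A secondary technical point is verifying that the embedded configuration really is equilateral — i.e.\ checking the ``diameter'' pairs $\norm{2\vu}$, $\norm{2\vv}$ also equal $\lambda$, which is what drives the need for a fifth (or sixth) point rather than just four, and this is exactly the role played by choosing $\lambda$ strictly inside (or at the endpoint of) the interval furnished by Lemma~\ref{lemma:lpplane}.
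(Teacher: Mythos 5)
Your proposal is a strategy outline rather than a proof, and the two places where it stays vague are exactly the places where the work happens. First, the configuration is never pinned down. Your initial suggestion $\pm\vu,\pm\vv$ with $\norm{\vu\pm\vv}_p=\lambda$ cannot be equilateral (the antipodal pairs have distance $2$), you note this, and then defer to ``a more careful configuration'' without giving one. The paper's actual set is concrete and does not use Lemma~\ref{lemma:lpplane} at all: it takes the four points $(1,1,1),(1,-1,-1),(-1,1,-1),(-1,-1,1)$ (an order-$4$ Hadamard simplex, so your Hadamard instinct is right) in the first three coordinates of $\ell_p^4$, together with the fifth point $(0,0,0,\lambda)$ where $\lambda=(2^{p+1}-3)^{1/p}$ is chosen to make the whole set $2^{1+1/p}$-equilateral; everything is padded with zeros in $\ell_p^{d-4}$ and in $X$.

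Second, and more seriously, the maximality step is only asserted. You say an extending point ``would have to coincide with one of the existing points or violate one of the distance equations,'' but in fact the unique candidate extension is a genuinely new point: after Lemma~\ref{lemma:scplane} (plus a direct computation when $p=1$) forces $\alpha_1=\alpha_2=\alpha_3=0$, one still needs Lemma~\ref{lemma:calc} to show $\alpha_4\leq-\lambda$, and then an equality analysis in the $\oplus_q$-norm to conclude the candidate is exactly $(0,0,0,-\lambda,\vo,\vo)$ with $\vx=\vo$, $\vy=\vo$. The contradiction — and the entire source of the threshold — is then the single computation $\norm{(0,0,0,-\lambda)-(0,0,0,\lambda)}_p=2\lambda$, which equals $2^{1+1/p}$ precisely when $2^{p+1}-3=2$, i.e.\ $2^p=5/2$. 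Your proposal never identifies this computation, so it cannot explain why the bound is $5$ for $p<\frac{\log 5/2}{\log 2}$ and $6$ at equality (where adding $(0,0,0,-\lambda,\vo,\vo)$ gives the maximal six-point set). Your attribution of the threshold to the endpoints of the interval in Lemma~\ref{lemma:lpplane}, and your suggestion that $p=1$ needs a Proposition~\ref{pettygen}-style argument, are both off target: the relevant tools are Lemmas~\ref{lemma:scplane} and~\ref{lemma:calc}, and $p=1$ is absorbed into the same argument with non-strict inequalities.
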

\begin{proof}
Consider the following subset of $\ell_p^d\oplus_q X = \left(\ell_p^4\oplus_p\ell_p^{d-4}\right)\oplus_q X$:
\begin{equation*}
    \begin{array}{r@{}r@{\,}r@{\,}r@{\;\;}r@{\;\;}l}
S= \{\; (& 1, & 1, & 1, &0, &\vo,\; \vo\;),\\
        (& 1, &-1, &-1, &0, &\vo,\; \vo\;),\\
        (&-1, & 1, &-1, &0, &\vo,\; \vo\;),\\
        (&-1, &-1, & 1, &0, &\vo,\; \vo\;),\\
        (& 0, & 0, & 0, &\lambda, &\vo,\; \vo\;)\; \}.
     \end{array}
\end{equation*}
By setting $\lambda=(2^{p+1}-3)^{1/p}$, the set $S$ becomes $2^{1+1/p}$-equilateral.
We show that $S$ is maximal equilateral if $p<\log(5/2)/\log 2$ and can be uniquely extended if $p=\log(5/2)/\log 2$.

Suppose that $(\alpha_1,\alpha_2,\alpha_3,\alpha_4,\vx,\vy)$ has distance $2^{1+1/p}$ to each point in $S$.
Then $(\alpha_1,\alpha_2,\alpha_3)$ has the same distance in $\ell_p^3$ to the points
\[(1,1,1), (1,-1,-1), (-1,1,-1), (-1,-1,1)\text{,} \]
which gives \[\norm{(\alpha_1,\alpha_2)-(1,1)}_p=\norm{(\alpha_1,\alpha_2)-(-1,-1)}_p\] and \[\norm{(\alpha_1,\alpha_2)-(1,-1)}_p=\norm{(\alpha_1,\alpha_2)-(-1,1)}_p.\]
It follows (from Lemma~\ref{lemma:scplane} if $p>1$) that $(\alpha_1,\alpha_2)=(0,0)$.
Thus $\abs{\alpha_3-1}=\abs{\alpha_3+1}$, which gives $\alpha_3=0$ and $3+\abs{\alpha_4}^p=\abs{\alpha_4-\lambda}^p$.
By Lemma~\ref{lemma:calc}, the function $f(x)=3+\abs{x}^p-\abs{x-\lambda}^p$ is increasing (strictly increasing if $p>1$).
Since $f(\alpha_4)=0$ and $f(-\lambda)=2^{p+1}(\frac{5}{2}-2^p)\geq0$ ($>0$ if $p=1$), it follows that $\alpha_4\leq-\lambda$.
Then by assumption,
\begin{align*}
2^{1+1/p} &=\norm{(0,0,0,\alpha_4,\vx,\vy)-(1,1,1,0,\vo,\vo)}\\
&=\left((3+\abs{\alpha_4}^p+\norm{\vx}_p^p)^{q/p}+\norm{\vy}^q\right)^{1/q} \\
&\geq (3+\lambda^p)^{1/p} = 2^{1+1/p},
\end{align*}
and equality holds throughout, which implies that $\alpha_4=-\lambda$, $\vx=\vo$ and $\vy=\vo$.
Also, by assumption, $2^{1+1/p}=\norm{(0,0,0,-\lambda,\vo,\vo)-(0,0,0,\lambda,\vo,\vo)}=2\lambda$, which implies $p=\frac{\log (5/2)}{\log 2}$. 
Therefore, $S$ is a maximal equilateral set unless $p=\frac{\log (5/2)}{\log 2}$, and then $S\cup\set{(0,0,0,-\lambda,\vo,\vo)}$ is maximal.
\end{proof}

An $n\times n$ matrix $H$ is called a \emph{Hadamard matrix} of order $n$ if each entry equals $\pm 1$ and $HH^{\Tr}=nI$.
It is well known \cite[Chapter~18]{vLW} that if an Hadamard matrix of order $n$ exists, then $n=1$, $n=2$ or $n$ is divisible by~$4$.
It is conjectured that there exist Hadamard matrices of all orders divisible by $4$.
This is known for all multiples of $4$ up to $664$ \cite{KT}.
The next lemma summarises the only results (all well known) on the existence of Hadamard matrices that we will use.
\begin{lemma}\label{hadamard}
\mbox{}
\begin{enumerate}
\item\label{20.1} There exist Hadamard matrices of orders $1$, $2$, $4$, $8$, $12$.

\item\label{20.2} Let $x\geq 1$.
The interval $(x,2x)$ contains the order of some Hadamard matrix iff $x\notin\set{1,2,4}$.

\item\label{20.3} Let $H(x)$ be the largest order $n$ of an Hadamard matrix with $n<x$.
Then \[\lim_{x\to\infty} H(x) / x = 1\text{.}\]
\end{enumerate}
\end{lemma}
\begin{proof}
\eqref{20.1} is well-known.

\eqref{20.2}
Given Hadamard matrices $H_1$ of order $n_1$ and $H_2$ of order $n_2$, the Kronecker product $H_1\otimes H_2$ is an Hadamard matrix of order $n_1n_2$ \cite[Chapter~18]{vLW}.
Starting with the (essentially unique) Hadamard matrices of orders $2$ and $12$, we obtain Hadamard matrices of orders $2^k$ and $6\cdot 2^k$ for all $k\geq 1$.
This is sufficient to cover every interval $(x,2x)$ except for $(1,2)$, $(2, 4)$ and $(4,8)$.

\eqref{20.3}
The Paley construction \cite[Chapter~18]{vLW} gives an Hadamard matrix of order $q+1$ for any prime power $q\equiv 3\pmod{4}$.
By the prime number theorem for arithmetic progressions \cite{Jameson} the number of primes less than $x$ that are congruent to $3$ modulo $4$ is $(1+o(1))x/(2\ln x)$.
This implies that the largest such prime less than $x$ is $\geq (1+o(1))x$, which gives $H(x)/x\to 1$ as~$x\to\infty$.
\end{proof}

An Hadamard matrix is \emph{normalised} if its first column are all $+1$s.
Note that any Hadamard matrix can be normalised by multiplying each row by its first entry.
If
\[ H = \begin{bmatrix} 1 & \vh_1 \\ 1 & \vh_2\\ \vdots & \vdots\\ 1 & \vh_n\end{bmatrix} \]
is a normalised Hadamard matrix we say that $\set{\vh_1,\dots,\vh_n}\subset\R^{n-1}$ is a \emph{Hadamard simplex}.
In the sequel we repeatedly use the fact that each $\vh_i$ has $n-1$ coordinates, each $\pm 1$, and that any two $\vh_i$ differ in exactly $n/2$ coordinates.
In particular, the distance between any two vertices of an Hadamard simplex in $\ell_p^{n-1}$ equals $n^{1/p}2^{1-1/p}$ and all vertices lie on a sphere with centre $\vo$ and radius $(n-1)^{1/p}$.
The next lemma shows in particular (by taking $X=\R$) that an Hadamard simplex cannot lie on any other sphere of $\ell_p^{n-1}$ if $p\in[1,\infty)$.
\begin{lemma}\label{lemma:hadamard}
Let $\set{\vh_1,\dots,\vh_n}\subset\R^{n-1}$ be an Hadamard simplex, $p\in[1,\infty)$, $X$ a normed space and $\vu\in X$.
Suppose that
\[(\vx_1,\dots,\vx_{n-1})\in \overbrace{X\oplus_p\dots\oplus_p X}^{\text{$n-1$ summands}}\]
has the same distance to each point of $\setbuilder{\vh_i\otimes \vu}{i\in[n]}\subset X\oplus_p\dots\oplus_p X$.
Then $\norm{\vx_i-\vu}=\norm{\vx_i+\vu}$ for all $i\in[n]$.
\end{lemma}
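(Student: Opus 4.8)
The plan is to exploit the combinatorial rigidity of the Hadamard simplex: the vectors $\vh_1,\dots,\vh_n$ differ pairwise in exactly $n/2$ coordinates, and this is precisely the structure needed to recover a sign-pattern statement coordinate by coordinate. Fix $i\in[n]$ and a coordinate $k\in[n-1]$; I want to show $\norm{\vx_k-\vu}=\norm{\vx_k+\vu}$. The idea is to find an index $j\neq i$ such that $\vh_i$ and $\vh_j$ agree on \emph{every} coordinate except $k$. If such a $j$ exists, then comparing the (equal) distances from $(\vx_1,\dots,\vx_{n-1})$ to $\vh_i\otimes\vu$ and to $\vh_j\otimes\vu$, all summands except the $k$-th cancel (since $\vh_i^{(\ell)}=\vh_j^{(\ell)}$ for $\ell\neq k$), leaving $\norm{\vx_k-\vh_i^{(k)}\vu}=\norm{\vx_k-\vh_j^{(k)}\vu}$; and since $\vh_i^{(k)}=-\vh_j^{(k)}$ we get exactly $\norm{\vx_k-\vu}=\norm{\vx_k+\vu}$ after absorbing the sign.

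The obstacle is that two rows of a Hadamard simplex differ in $n/2$ coordinates, not in a single coordinate, so in general no such $j$ exists directly. The fix is to work in the Hadamard simplex of a smaller order. Concretely, from the normalised Hadamard matrix $H$ of order $n$ one can pass to submatrices: selecting a suitable pair of rows and projecting onto the coordinates where they differ, one obtains (after renormalising) a Hadamard matrix of order $2$, whose simplex is just $\set{+1,-1}\subset\R$. But the clean way to do this is to observe that it suffices to handle each coordinate $k$ by restricting attention to a $2$-element sub-configuration. Since the distances from $(\vx_1,\dots,\vx_{n-1})$ are equal across \emph{all} of $\set{\vh_i\otimes\vu}$, in particular they are equal across any two of them; so I only ever need two rows $\vh_i,\vh_j$ at a time, and I need these two rows to agree off coordinate $k$. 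Because any two rows agree on exactly $(n-1)-n/2 = n/2-1$ coordinates, I cannot pick $j$ freely — but I can change the \emph{coordinate system}: multiply selected columns of $H$ by $-1$ (which preserves the Hadamard property and the equal-distance hypothesis, merely replacing $\vx_\ell$ by $-\vx_\ell$ in those coordinates, and $\norm{\vx_\ell-\vu}=\norm{\vx_\ell+\vu}$ is invariant under $\vx_\ell\mapsto-\vx_\ell$), and also permute rows and columns. Using these symmetries, I can bring any prescribed coordinate $k$ and any prescribed pair of rows into a standard position.

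So the structure of the argument I would write is: first, record that the equal-distance hypothesis and the desired conclusion are both invariant under (a) permuting the coordinates $[n-1]$, (b) negating any subset of the coordinates, and (c) permuting the $n$ points. Second, fix $k\in[n-1]$; I claim there exist $i\neq j$ in $[n]$ with $\vh_i^{(\ell)}=\vh_j^{(\ell)}$ for all $\ell\neq k$ and $\vh_i^{(k)}=-\vh_j^{(k)}$ — this is a statement purely about Hadamard matrices and follows because, after normalising and applying column operations, the matrix contains two rows of the form $(1,\dots)$ differing only in position $k$; alternatively one argues via the row space, which contains all $n$ rows and, since $HH^\Tr=nI$ forces the rows to form (together with the all-ones vector) a system from which the standard basis vectors can be extracted after sign changes. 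Third, for that pair $i,j$, subtract the two equal distance equations: in $X\oplus_p\cdots\oplus_p X$ the $p$-th powers of the norms satisfy $\sum_\ell\norm{\vx_\ell-\vh_i^{(\ell)}\vu}^p = \sum_\ell\norm{\vx_\ell-\vh_j^{(\ell)}\vu}^p$, all terms with $\ell\neq k$ cancel, and we are left with $\norm{\vx_k-\vu}^p=\norm{\vx_k+\vu}^p$, hence $\norm{\vx_k-\vu}=\norm{\vx_k+\vu}$. Since $k$ was arbitrary, this proves the lemma. The main thing to get right is the second step — the existence of the special pair of rows — which is where one must actually use the defining relation $HH^\Tr=nI$ rather than just the pairwise-difference count; I would phrase it by noting that the rows of $H$ span $\R^n$, so the standard basis vectors lie in their span, and translating a standard basis vector back through the $\pm 1$ column normalisations yields the two rows agreeing off a single coordinate.
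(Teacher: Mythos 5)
There is a genuine gap, and it sits exactly at the step you yourself flag as ``the main thing to get right''. Your argument hinges on finding, for each coordinate $k\in[n-1]$, two rows $\vh_i,\vh_j$ of the Hadamard simplex that agree on every coordinate except $k$. No such pair exists once $n\geq 4$: any two distinct rows of an $n\times n$ Hadamard matrix differ in exactly $n/2$ positions, so the corresponding $\vh_i,\vh_j\in\R^{n-1}$ differ in exactly $n/2\geq 2$ coordinates, and the symmetries you invoke (permuting rows, permuting coordinates, negating a subset of coordinates) all preserve pairwise Hamming distances between rows, so they cannot manufacture such a pair. For instance, with $n=4$ the simplex is $(1,1,1),(1,-1,-1),(-1,1,-1),(-1,-1,1)$, and every pair differs in two places. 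The ``row space'' variant does not rescue this: the fact that $\ve_k$ lies in the span of the rows gives a linear combination $\sum_i c_i\vh_i=\ve_k$, not two rows differing only at $k$, and you cannot naively apply that linear combination to the distance equations because $\norm{\vx_\ell-h\vu}^p$ is not visibly linear in the row entries.

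The missing idea --- and the paper's actual route --- is that the equidistance equations \emph{are} affine in the sign vectors: since $h$ ranges only over $\{\pm1\}$, one has $\norm{\vx_\ell-h\vu}^p=\tfrac12\bigl(\norm{\vx_\ell-\vu}^p+\norm{\vx_\ell+\vu}^p\bigr)+\tfrac{h}{2}\bigl(\norm{\vx_\ell-\vu}^p-\norm{\vx_\ell+\vu}^p\bigr)$. Hence subtracting the equation for $\vh_1$ from the others produces the linear system $(\vh_i-\vh_1)\cdot\vw=0$ for $i=2,\dots,n$, where $\vw$ is the vector with entries $\norm{\vx_\ell-\vu}^p-\norm{\vx_\ell+\vu}^p$. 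The coefficient matrix with rows $\vh_i-\vh_1$ is invertible, because subtracting the first row of the invertible normalised Hadamard matrix from the others leaves an invertible matrix whose lower-right $(n-1)\times(n-1)$ block is exactly this one; therefore $\vw=\vo$, which is the conclusion. Your instinct to isolate a single coordinate is sound, but the isolation must be achieved by inverting this linear system (equivalently, by taking the appropriate linear combination of all $n$ equations), not by locating two rows that differ in one coordinate --- the latter simply do not exist.
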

\begin{proof}
Let $\vh_i=[h_{i,1},h_{i,2},\dots,h_{i,n-1}]$ for $i\in[n]$.
We may assume without loss of generality that $\vh_1=[-1,-1,\dots,-1]$.
Since $\vx=(\vx_1,\vx_2,\dots,\vx_{n-1})$ is equidistant to all $\vh_i\otimes\vu$, there exists $D\geq 0$ such that $\sum_{j=1}^{n-1}\norm{\vx_j-h_{i,j}\vu}^p=D^p$ for each $i\in[n]$.
Subtract the first of these equations from the others to obtain the system
\begin{equation}\label{system}
\begin{bmatrix} 
    \vh_2-\vh_1 \\ \vh_3-\vh_1 \\ \vdots \\ \vh_{n}-\vh_1
\end{bmatrix} 
\begin{bmatrix} 
    \norm{\vx_1-\vu}^p-\norm{\vx_1+\vu}^p \\
    \norm{\vx_2-\vu}^p-\norm{\vx_2+\vu}^p \\
    \vdots \\
    \norm{\vx_{n-1}-\vu}^p-\norm{\vx_{n-1}+\vu}^p \\
\end{bmatrix} = 
\begin{bmatrix} 
    0 \\ 0 \\ \vdots \\ 0 
\end{bmatrix}
\end{equation}
The Hadamard matrix $H$ is invertible.
If we subtract the first row from all the other rows, the resulting matrix
\[ 
\begin{bmatrix} 
    1 & \vo \\ 0 & \vh_2-\vh_1 \\ \vdots & \vdots \\ 0 & \vh_{n}-\vh_1 
\end{bmatrix} 
\]
is still invertible.
It follows that \eqref{system} has the unique solution 
\[\norm{\vx_j-\vu}^p-\norm{\vx_j+\vu}^p=0\quad\text{for all $j\in[n-1]$.}\qedhere\]
\end{proof}

\begin{lemma}\label{lemma:equid}
Let $\set{\vh_1,\dots,\vh_n}\subset\R^{n-1}$ be an Hadamard simplex and $p\in[1,\infty)$.
Let $\vu$ and $\vv$ be linearly independent unit vectors in a strictly convex $2$\dimensional normed space $X$.
Suppose that 
\[\vx=(\vx_1,\dots,\vx_{n-1})\in \overbrace{X\oplus_p\dots\oplus_p X}^{\text{$n-1$ summands}}\]
has the same distance in the $p$-norm to each point of $\setbuilder{\vh_i\otimes \vu}{i\in[n]}$, and the same distance to each point of $\setbuilder{\vh_i\otimes\vv}{i\in[n]}$.
Then $\vx=\vo$.
\end{lemma}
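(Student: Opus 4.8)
The plan is to reduce the multi-coordinate statement to the two\dimensional situation of Lemma~\ref{lemma:scplane} by working one coordinate at a time, using Lemma~\ref{lemma:hadamard} to convert the two equidistance hypotheses into symmetry conditions on each component $\vx_j$ of $\vx$.

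First I would apply Lemma~\ref{lemma:hadamard} to the given Hadamard simplex $\vh_1,\dots,\vh_n$ with the unit vector $\vu\in X$: since $\vx=(\vx_1,\dots,\vx_{n-1})$ has the same $p$-distance to every point of $\setbuilder{\vh_i\otimes\vu}{i\in[n]}$, the lemma yields $\norm{\vx_j-\vu}=\norm{\vx_j+\vu}$ for every $j\in[n-1]$. Next I would apply Lemma~\ref{lemma:hadamard} a second time, now with the unit vector $\vv$ (which lies in the same space $X$, so the lemma applies unchanged): from the hypothesis that $\vx$ has the same $p$-distance to every point of $\setbuilder{\vh_i\otimes\vv}{i\in[n]}$ we obtain $\norm{\vx_j-\vv}=\norm{\vx_j+\vv}$ for every $j\in[n-1]$.

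Finally, fix $j\in[n-1]$. The point $\vx_j$ lies in the strictly convex $2$\dimensional normed space $X$, the vectors $\vu,\vv$ are linearly independent unit vectors of $X$, and by the two steps above $\norm{\vx_j-\vu}=\norm{\vx_j+\vu}$ and $\norm{\vx_j-\vv}=\norm{\vx_j+\vv}$. Lemma~\ref{lemma:scplane} then gives $\vx_j=\vo$. As $j\in[n-1]$ was arbitrary, $\vx=\vo$, as claimed. There is no genuine obstacle in this argument: all the work is carried by Lemmas~\ref{lemma:hadamard} and~\ref{lemma:scplane}, and the present lemma merely splices them together. The only point requiring care is that strict convexity of $X$ is indispensable for the last step — Lemma~\ref{lemma:scplane} is false without it — which is precisely why that hypothesis appears; here it is used as a black box.
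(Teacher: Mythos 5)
Your proof is correct and is exactly what the paper intends: the paper's proof is the one-line ``Combine Lemmas~\ref{lemma:scplane} and \ref{lemma:hadamard}'', and your argument spells out precisely that combination, applying Lemma~\ref{lemma:hadamard} once for $\vu$ and once for $\vv$ and then Lemma~\ref{lemma:scplane} coordinatewise.
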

\begin{proof}
Combine Lemmas~\ref{lemma:scplane} and \ref{lemma:hadamard}.
\end{proof}
\begin{proposition}\label{prop20}
Let $p\in[1,2)$, $q\in[1,\infty)$, and $X$ any normed space.
Let $k_1,k_2\in\N$ be such that there exist Hadamard matrices of orders $k_1$ and $k_2$ and such that
\begin{gather}
2-2^{p-1} < \frac{1}{k_1}+\frac{1}{k_2} < 4-2^p\text{,} \label{one}\\
(1-2^{-p})(2-2^{p-1}) < (1-2^{1-p})\frac{1}{k_1}+\frac{1}{k_2}\text{,}\label{two}\\
\text{and }(1-2^{-p})(2-2^{p-1}) < \frac{1}{k_1}+(1-2^{1-p})\frac{1}{k_2}\text{.}\label{three}
%\text{and if $k_1= k_2$, then $2-2^{p-1 }< \frac{1}{k_1}+\frac{1}{k_2}< 4-2^p$.}\label{four}
\end{gather}
Then $m(\ell_p^{d}\oplus_q X)\leq 2(k_1+k_2)$ for all $d\geq 2(k_1+k_2-1)$.
\end{proposition}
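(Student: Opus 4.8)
The plan is to build an explicit equilateral set of $2(k_1+k_2)$ points in $\ell_p^d\oplus_q X$ by combining two Hadamard simplices (of orders $k_1$ and $k_2$) with two linearly independent unit vectors $\vu,\vv$ in a strictly convex $2$-dimensional normed plane, and then to verify that this set cannot lie on any sphere other than the obvious one through its points. Concretely, fix a strictly convex norm on $\R^2$ and use Lemma~\ref{lemma:lpplane} to choose unit vectors $\vu,\vv$ with $\norm{\vu+\vv}=\norm{\vu-\vv}=\lambda$ for a suitable $\lambda$ in the range permitted by that lemma; since we actually need strict convexity for the uniqueness argument, I would perturb if necessary, or simply work in the relevant $2$-dimensional subspace of $\ell_p^2$ when $p>1$ (which is strictly convex) and handle $p=1$ separately. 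Let $\vh_1,\dots,\vh_{k_1}\subset\R^{k_1-1}$ and $\vg_1,\dots,\vg_{k_2}\subset\R^{k_2-1}$ be the two Hadamard simplices. The candidate equilateral set consists of the $k_1$ points $\vh_i\otimes\vu$ placed in the first $k_1-1$ coordinate blocks, the $k_1$ points $\vh_i\otimes\vv$ in the same blocks, the $k_2$ points $\vg_j\otimes\vu$ in the next $k_2-1$ blocks, and the $k_2$ points $\vg_j\otimes\vv$ there as well — all embedded into $\ell_p^d\oplus_q X$ with zeros elsewhere. This gives $2(k_1+k_2)$ points and requires $d\geq 2(k_1-1)+2(k_2-1)=2(k_1+k_2-1)$ coordinates, matching the hypothesis.

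The next step is to compute the pairwise distances and use inequalities \eqref{one}--\eqref{three} to force them all equal after rescaling. Within one Hadamard block the distance between $\vh_i\otimes\vw$ and $\vh_{i'}\otimes\vw$ (same $\vw\in\set{\vu,\vv}$) is $k_1^{1/p}2^{1-1/p}$ by the remarks preceding Lemma~\ref{lemma:hadamard}, and similarly $k_2^{1/p}2^{1-1/p}$ for the $\vg$-block; the distance between $\vh_i\otimes\vu$ and $\vh_i\otimes\vv$ involves $\norm{\vu-\vv}=\lambda$, between $\vh_i\otimes\vu$ and $\vh_{i'}\otimes\vv$ (with $i\neq i'$) it involves a mix of $\norm{\vu-\vv}$ and $\norm{\vu+\vv}=\lambda$ over the $k_1/2$ agreeing and $k_1/2$ disagreeing coordinates, and cross-block distances (an $\vh$-point versus a $\vg$-point) are $\big(k_1\cdot\tfrac{\|\cdot\|^p}{\cdots}+k_2\cdot\cdots\big)^{1/p}$ type sums. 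The role of the three displayed inequalities is precisely to guarantee that a common value $\lambda$ can be chosen making all these distances coincide: \eqref{one} is the feasibility range for $\lambda$ coming from Lemma~\ref{lemma:lpplane} translated into the $1/k_1+1/k_2$ parametrisation, while \eqref{two} and \eqref{three} ensure the cross-distances (which are the ones sensitive to the asymmetry between $k_1$ and $k_2$) land in the same interval. So after scaling I get a genuine $\mu$-equilateral set for an explicit $\mu$.

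Finally, I must show this equilateral set is maximal, i.e.\ no point $\vz=(\vx_1,\dots;\ldots)\in\ell_p^d\oplus_q X$ is equidistant to all $2(k_1+k_2)$ points. Suppose such a $\vz$ exists. Projecting onto each Hadamard block and applying Lemma~\ref{lemma:equid} to the pair of simplices $\setbuilder{\vh_i\otimes\vu}{i}$, $\setbuilder{\vh_i\otimes\vv}{i}$ forces the $\vh$-block coordinates of $\vz$ to vanish, and likewise for the $\vg$-block; the $\oplus_q X$-component and any remaining $\ell_p$-coordinates are then pinned down by an $\ell_p$-convexity/equality argument exactly as in the proof of Proposition~\ref{prop17} (an application of Lemma~\ref{lemma:calc} to compare $\abs{x}^p$ with $\abs{x-\mu}^p$), giving $\vz=\vo$. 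But $\vo$ is not equidistant to the original points unless its distance equals the common radius $(k_1-1)^{1/p}$-type value, which a direct check rules out under the strict inequalities in \eqref{one}--\eqref{three} (the same place where strictness, as opposed to the non-strict monotonicity lemma, is used). Hence the set is maximal equilateral and $m(\ell_p^d\oplus_q X)\leq 2(k_1+k_2)$. The main obstacle I anticipate is the bookkeeping in the distance computation and, in particular, isolating exactly which combination of block sizes produces the two asymmetric conditions \eqref{two} and \eqref{three}; the strict-convexity subtlety at $p=1$ also needs separate care, presumably by reverting to the non-strict monotonicity lemma and checking that no unwanted extension survives.
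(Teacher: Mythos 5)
Your overall strategy (two Hadamard simplices tensored with a pair of unit vectors $\vu,\vv$ satisfying $\norm{\vu+\vv}=\norm{\vu-\vv}$, equilaterality from the Hadamard structure, maximality via Lemma~\ref{lemma:equid}) is the right one, but your construction as described is missing an essential ingredient and cannot be made equilateral. With only the tensor coordinates, every pairwise distance is rigidly determined: the $p$-th power of the distance between two $\vu$-points of the same (normalised) $k_1$-simplex is $\frac{k_1}{2}\cdot\frac{1}{k_1}\norm{2\vu}^p=2^{p-1}$, while the distance between a point of the first block and a point of the second block is $(1-\frac{1}{k_1})+(1-\frac{1}{k_2})=2-\frac{1}{k_1}-\frac{1}{k_2}$, with \emph{no free parameter}. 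Equating these forces the exact identity $\frac{1}{k_1}+\frac{1}{k_2}=2-2^{p-1}$, which contradicts the strict inequality in \eqref{one}; so under the stated hypotheses your set is simply not equilateral. Relatedly, your dimension count $2(k_1-1)+2(k_2-1)$ equals $2(k_1+k_2-2)$, not $2(k_1+k_2-1)$: the two missing coordinates are exactly where the paper places extra scalars $\pm\alpha_1$ and $\pm\alpha_2$ (so that the four families are $(\mp\alpha_1,k_1^{-1/p}\vg_i\otimes\vu_1\text{ or }\vv_1,0,\vo)$ and $(0,\vo,\mp\alpha_2,k_2^{-1/p}\vh_i\otimes\vu_2\text{ or }\vv_2)$). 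These scalars contribute $(2\alpha_j)^p$ to the within-block $\vu$-versus-$\vv$ distances and $\alpha_1^p+\alpha_2^p$ to the cross-block distances, and it is precisely the solvability of the resulting system in $\alpha_1,\alpha_2,\lambda_1,\lambda_2$ that conditions \eqref{one}--\eqref{three} encode (the paper also uses two independent parameters $\lambda_1,\lambda_2$, one per block, rather than your single $\lambda$).

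The second, equally important role of $\alpha_1,\alpha_2$ is in the maximality step, where your argument also breaks down. After Lemma~\ref{lemma:equid} kills the tensor coordinates of a putative equidistant point, the only possible sphere centre is $\vo$, and you then need $\vo$ \emph{not} to be equidistant to the set. In your construction the distances from $\vo$ to the two blocks are $(1-\frac{1}{k_1})^{1/p}$ and $(1-\frac{1}{k_2})^{1/p}$, which coincide whenever $k_1=k_2$ --- and $k_1=k_2$ is the main case used to prove Theorem~\ref{thm:lp}. The paper avoids this by noting that the distances from $\vo$ become $(\alpha_1^p+1)^{1/p}$ and $(\alpha_2^p+1)^{1/p}$ and then choosing $\alpha_1\neq\alpha_2$, which the open conditions \eqref{one}--\eqref{three} leave room for. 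Your suggestion to finish with a Lemma~\ref{lemma:calc} argument ``as in Proposition~\ref{prop17}'' does not substitute for this, since there is no distinguished coordinate left to run it on. Finally, the worry about strict convexity is resolved by taking $\vu,\vv$ in $\ell_p^2$ itself via Lemma~\ref{lemma:lpplane}; for $p>1$ this plane is strictly convex, which is all that Lemma~\ref{lemma:equid} needs.
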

\begin{proof}
It is sufficient to construct an equilateral set $S$ of cardinality $2(k_1+k_2)$ in $\ell_p^{2(k_1+k_2-1)}$ that does not lie on any sphere.
Then $(S\oplus\set{\vo})\oplus\{\vo\}$ will be maximal equilateral in $\ell_p^d\oplus_q X = \left(\ell_p^{2(k_1+k_2-1)}\oplus_p\ell_p^{d-2(k_1+k_2-1)}\right)\oplus_q X$ for any $q\in[1,\infty)$.

Let $\alpha_1,\alpha_2,\lambda_1,\lambda_2\in\R$ (to be fixed later) such that
\begin{equation}\label{eq:constraints}
\alpha_1,\alpha_2 > 0\quad\text{and}\quad 2^{1-1/p}\leq\lambda_1,\lambda_2\leq2^{1/p}\text{.}
\end{equation}
By Lemma~\ref{lemma:lpplane} there exist unit vectors $\vu_1,\vu_2,\vv_1,\vv_2\in\ell_p^2$ such that $\set{\vu_i,\vv_i}$ is linearly independent and $\norm{\vu_i\pm\vv_i}_p=\lambda_i$ for $i=1,2$.
Let $\setbuilder{\vg_i}{i\in[k_1]}\subset\ell_p^{k_1-1}$ and $\setbuilder{\vh_i}{i\in[k_2]}\subset\ell_p^{k_2-1}$ be Hadamard simplices.
Consider the following subsets of $\ell_p^{2(k_1+k_2-1)}=\R\oplus_p\ell_p^{2(k_1-1)}\oplus_p\R\oplus_p\ell_p^{2(k_2-1)}$:
\begin{equation*}
    \begin{array}{l@{\;=\bigl\{\bigl(}r@{\, ,\;\,}r@{\, ,\;\,}r@{\, ,\;\,}r@{\bigr)\;\colon\,}r@{\bigr\}}l@{}l}
S_1^- & -\alpha_1 & k_1^{-1/p}\vg_i\otimes\vu_1 &0 &\vo & i\in[k_1] & \text{,} \\[1mm]
S_1^+ & \alpha_1  & k_1^{-1/p}\vg_i\otimes\vv_1 &0 &\vo & i\in[k_1] & \text{,} \\[1mm]
S_2^- & 0         &\vo                          &-\alpha_2 & k_2^{-1/p}\vh_i\otimes\vu_2 &i\in[k_2] & \text{,} \\[1mm]
S_2^+ & 0         &\vo &\alpha_2                & k_2^{-1/p}\vh_i\otimes\vv_2 & i\in[k_2] & \text{.}
     \end{array}
\end{equation*}
We would like to choose $\alpha_1,\alpha_2,\lambda_1,\lambda_2$ so as to make $S=S_1^-\cup S_1^+ \cup S_2^-\cup S_2^+$ equilateral and non-spherical.
Note that then $\card{S}=2(k_1+k_2)$, since all the points will be distinct.

The $p$th power of the distance between points
\begin{itemize}
\item in the same set $S_1^{\pm}$ equals \[\frac{k_1}{2}\frac{1}{k_1}\norm{2\vu_1}_p^p=\frac{k_1}{2}\frac{1}{k_1}2^p=2^{p-1}\text{,}\]
\item in the same set $S_2^{\pm}$ similarly equals $\frac{k_2}{2}\frac{1}{k_2}2^p=2^{p-1}$,
\item in $S_1^-$ and $S_1^+$ equals \[(2\alpha_1)^p+(k_1-1)\frac{1}{k_1}\norm{\vu_1\pm\vv_1}_p^p=(2\alpha_1)^p+\Bigl(1-\frac{1}{k_1}\Bigr)\lambda_1^p\text{,}\]
\item in $S_2^-$ and $S_2^+$ similarly equals $(2\alpha_2)^p+(1-\frac{1}{k_2})\lambda_2^p$,
\item in $S_1^-\cup S_1^+$ and $S_2^-\cup S_2^+$ equals
\[\alpha_1^p+\frac{k_1-1}{k_1}+\alpha_2^p+\frac{k_2-1}{k_2}=\alpha_1^p+\alpha_2^p+2-\Bigl(\frac{1}{k_1}+\frac{1}{k_2}\Bigr)\text{.}\]
\end{itemize}
For $S$ to be equilateral, we therefore need
\begin{gather}
(2\alpha_1)^p+\Bigl(1-\frac{1}{k_1}\Bigr)\lambda_1^p=2^{p-1},\quad 
(2\alpha_2)^p+\Bigl(1-\frac{1}{k_2}\Bigr)\lambda_2^p=2^{p-1}\label{eq:equil1}\\
\intertext{and}
\alpha_1^p+\alpha_2^p+2-\Bigl(\frac{1}{k_1}+\frac{1}{k_2}\Bigr)=2^{p-1}.\label{eq:equil2}
\end{gather}
The set $S$ will lie on some sphere iff some $(\beta,\vx,\gamma,\vy)$ is equidistant to $S$.
This implies that $\vx$ is equidistant to all $k_1^{-1/p}\vg_i\otimes\vu_1$ and also equidistant to all $k_1^{-1/p}\vg_i\otimes\vv_1$.
By Lemma~\ref{lemma:equid}, $\vx=\vo$.
Similarly, $\vy=\vo$.
Then $\abs{-\alpha_1-\beta}=\abs{\alpha_1-\beta}$, which gives $\beta=0$.
Similarly, $\gamma=0$.
Thus $S$ can only lie on a sphere with centre $\vo$ and radius $(\alpha_1^p+(k_1-1)/k_1)^{1/p}=(\alpha_2^p+(k_2-1)/k_2)^{1/p}$.
%It follows that $S$ lies on a sphere iff $\alpha_1=\alpha_2$.
Therefore, for $S$ not to lie on a sphere, we need
\begin{equation}\label{eq:inequation}
\alpha_1^p-\frac{1}{k_1}\neq\alpha_2^p-\frac{1}{k_2}.
\end{equation}
It turns out that the hypotheses \eqref{one}, \eqref{two}, \eqref{three} are sufficient
for the three simultaneous equations \eqref{eq:equil1} and \eqref{eq:equil2} to have a solution in $\alpha_1,\alpha_2,\lambda_1,\lambda_2$ given the constraints \eqref{eq:constraints} and \eqref{eq:inequation}.
This can be seen as follows.
First use \eqref{eq:equil1} to eliminate $\alpha_1$ and $\alpha_2$ from \eqref{eq:constraints}, \eqref{eq:equil2} and \eqref{eq:inequation}, and set $x_1=(1-\frac{1}{k_1})\lambda_1^p$ and $x_2=(1-\frac{1}{k_2})\lambda_2^p$ to obtain that the simultaneous solvability of \eqref{eq:constraints}, \eqref{eq:equil1}, \eqref{eq:equil2} and \eqref{eq:inequation} is equivalent to the existence of $x_1,x_2\in\R$ such that
\begin{align}
x_1+x_2&=2^p\Bigl(3 - 2^{p-1}-\frac{1}{k_1}-\frac{1}{k_2}\Bigr)\text{,}\label{19}\\
2^{p-1}\bigl(1-\frac{1}{k_1}\bigr) &\leq x_1 \leq 2\Bigl(1-\frac{1}{k_1}\Bigr)\text{,}\label{20}\\
2^{p-1}\bigl(1-\frac{1}{k_2}\bigr) &\leq x_2\leq 2\Bigl(1-\frac{1}{k_2}\Bigr)\text{,}\label{21}\\
x_1,x_2 &< 2^{p-1}\text{,}\label{21a}\\
\qquad\text{and}\quad x_1-x_2&\neq 2^p\left(\frac{1}{k_2}-\frac{1}{k_1}\right)\text{.}\label{22}
\end{align}
Geometrically, it is sufficient to show that the line~$\ell$ in the $x_1x_2$-plane with equation \eqref{19} intersects the interior of the rectangle $R$ with bottom-left corner $\va=2^{p-1}(1-\frac{1}{k_1},1-\frac{1}{k_2})$ and top-right corner 
\[\vb=\left(\min\Bigset{2^{p-1},2\Bigl(1-\frac{1}{k_1}\Bigr)}, \min\Bigset{2^{p-1},2\Bigl(1-\frac{1}{k_2}\Bigr)}\right)\text{.}\]
Indeed, if $\ell$ intersects the interior of $R$ then it intersects $R$ in infinitely many points, among which one can be chosen that satisfies \eqref{22}.
That $\ell$ intersects the interior of $R$ is equivalent to $\va$ lying below $\ell$:
\begin{equation}\label{23}
2^{p-1}\Bigl(1-\frac{1}{k_1}\Bigr)+2^{p-1}\Bigl(1-\frac{1}{k_2}\Bigr) < 2^p\left(3 - 2^{p-1}-\frac{1}{k_1}-\frac{1}{k_2}\right)
\end{equation}
and $\vb$ lying above $\ell$:
\begin{equation}\label{24}
2^p\Bigl(3 - 2^{p-1}-\frac{1}{k_1}-\frac{1}{k_2}\Bigr) < \min\Bigset{2^{p-1},2\Bigl(1-\frac{1}{k_1}\Bigr)} + \min\Bigset{2^{p-1},2\Bigl(1-\frac{1}{k_2}\Bigr)}\text{.}
\end{equation}
Inequality~\eqref{23} is equivalent to the second inequality in \eqref{one}.

Inequality \eqref{24} can be split into four simultaneous inequalities:
\begin{align}
\label{a} 2^p\Bigl(3 - 2^{p-1}-\frac{1}{k_1}-\frac{1}{k_2}\Bigr) & < 2\Bigl(1-\frac{1}{k_1}\Bigr)+2\Bigl(1-\frac{1}{k_2}\Bigr)\text{,} \tag{\theequation a}\\
\label{b} 2^p\Bigl(3 - 2^{p-1}-\frac{1}{k_1}-\frac{1}{k_2}\Bigr) & < 2^{p-1}+2^{p-1}\text{,} \tag{\theequation b}\\
\label{c} 2^p\Bigl(3 - 2^{p-1}-\frac{1}{k_1}-\frac{1}{k_2}\Bigr) & < 2\Bigl(1-\frac{1}{k_1}\Bigr)+2^{p-1}\text{,} \tag{\theequation c}\\
\label{d} 2^p\Bigl(3 - 2^{p-1}-\frac{1}{k_1}-\frac{1}{k_2}\Bigr) & < 2^{p-1}+2\Bigl(1-\frac{1}{k_2}\Bigr)\text{.} \tag{\theequation d}
\end{align}
Each of \eqref{a} and \eqref{b} is equivalent to the first inequality in \eqref{one}.
Inequality~\eqref{c} is equivalent to \eqref{two} and \eqref{d} is equivalent to \eqref{three}.

It follows that $\alpha_1,\alpha_2,\lambda_1,\lambda_2$ can be chosen so that $S$ is equilateral and non-spherical.
\end{proof}

\begin{proof}[Proof of Theorem~\ref{thm:lp}]
By Lemma~\ref{hadamard} the interval $\bigl(2/(4-2^p),4/(4-2^p)\bigr)$ contains the order of some Hadamard matrix except if $2/(4-2^p)\in\set{1,2,4}$.
Therefore, unless \[p\in\set{1,\frac{\log3}{\log2}, \frac{\log(7/2)}{\log 2}}\text{,}\]
if we let $k_1=k_2=k$ be the order of such an Hadamard matrix, conditions \eqref{one}, \eqref{two} and \eqref{three} are satisfied, and Proposition~\ref{prop20} gives a maximal equilateral set of size $C(p)=2(k_1+k_2)=4k$ in all dimensions at least $d_0(p)=2(k_1+k_2-1)=4k-2$.

Asymptotically when $p\to 2$, we may choose $k$ to be the largest order of an Hadamard matrix such that $k<4/(4-2^p)$.
Then by Lemma~\ref{hadamard}, $4k < 16/(4-2^p)\sim4/((2-p)\ln 2)$, which gives the required asymptotic upper bounds for $C(p)$ and $d_0(p)$.

The exceptional case $p=1$ has already been dealt with in Proposition~\ref{prop17}.
For $p=\log3/\log 2$ we may use $(k_1,k_2)=(2,4)$ in Proposition~\ref{prop20} and for $p=\log(7/2)/\log 2$ we may use $(k_1,k_2)=(4,8)$.
%That the conditions of Proposition~\ref{prop20} are satisfied in these special cases is easily checked.
This covers the existence of $C(p)$ and $d_0(p)$ for all $p\in[1,2)$.

The last column of Table~\ref{table} indicates how each line in that table is obtained: Proposition~\ref{prop17} covers the case $1\leq p \leq \tfrac{\log (5/2)}{\log 2}$, and in the remaining cases Proposition~\ref{prop20} is applied with Hadamard matrices of various orders $k_1$ and $k_2$.
\end{proof}

\begin{proof}[Proof of Corollary~\ref{cor:lp}]
%We take $X$ to be the zero space in Theorem~\ref{thm:lp}.
Let $p\in[1,2)$ and let $\Gamma$ be any set.
If $\Gamma$ is infinite, then by Theorem~\ref{thm:lp} where $X$ is chosen to be $\ell_p(\Gamma)$ and $q=p$, $m(\ell_p^d\oplus_p\ell_p(\Gamma))\leq C(p)$ for large enough $d$.
However, since $\Gamma$ is infinite, $\ell_p^d\oplus_p\ell_p(\Gamma)$ is isometrically isomorphic to $\ell_p(\Gamma)$.

For the remaining case where $\Gamma$ is finite, it is sufficient to bound $m(\ell_p^d)$ for all $d$.
Again by Theorem~\ref{thm:lp} where $X$ is chosen to be the zero space, $m(\ell_p^d)\leq C(p)$ for all $d\geq d_0(p)$.
For $d<d_0(p)$, we apply the Danzer-Gr\"unbaum-Petty-Soltan result to obtain $m(\ell_p^d) < 2^{d_0(p)}$.

Thus for fixed $p\in[1,2)$, we have found an upper bound to $m(\ell_p(\Gamma))$ uniform over all $\Gamma$.
\end{proof}

\section*{Acknowledgements}
We thank Roman Karasev for helpful remarks.
We are very grateful to two anonymous referees whose careful proofreading caught a few egregious errors.

\end{document}